\title[A note on the filtered decomposition theorem]{A note on the filered decomposition theorem}
\author{Zebao Zhang
}
\address{
University of Science and Technology of China, School of Mathematical Sciences, Hefei, 230026, China}
\address{Peking University,
Beijing International Center for Mathematical Research, Beijing, 100871, China
}
\email{zzb2@mail.ustc.edu.cn}
\begin{document}
\theoremstyle{plain}
\newtheorem{thm}{Theorem}[section]
\newtheorem{theorem}[thm]{Theorem}
\newtheorem*{theorem*}{Theorem}
\newtheorem*{theoremA*}{Theorem A}
\newtheorem*{theoremB*}{Theorem B}
\newtheorem*{theoremC*}{Theorem C}
\newtheorem*{definition*}{Definition}
\newtheorem{lemma}[thm]{Lemma}
\newtheorem{sublemma}[thm]{Sublemma}
\newtheorem{corollary}[thm]{Corollary}
\newtheorem*{corollary*}{Corollary}
\newtheorem{proposition}[thm]{Proposition}
\newtheorem{addendum}[thm]{Addendum}
\newtheorem{variant}[thm]{Variant}
\theoremstyle{definition}
\newtheorem{lemma-definition}[thm]{Lemma-Definition}
\newtheorem{proposition-definition}[thm]{Proposition-Definition}
\newtheorem{theorem-definition}[thm]{Theorem-Definition}
\newtheorem{construction}[thm]{Construction}
\newtheorem{notations}[thm]{Notations}
\newtheorem{question}[thm]{Question}
\newtheorem{problem}[thm]{Problem}
\newtheorem*{problem*}{Problem}
\newtheorem{remark}[thm]{Remark}
\newtheorem*{remark*}{Remark}
\newtheorem{remarks}[thm]{Remarks}
\newtheorem{definition}[thm]{Definition}
\newtheorem{claim}[thm]{Claim}
\newtheorem{assumption}[thm]{Assumption}
\newtheorem{assumptions}[thm]{Assumptions}
\newtheorem{properties}[thm]{Properties}
\newtheorem{example}[thm]{Example}
\newtheorem{conjecture}[thm]{Conjecture}


\newcommand{\sX}{{\mathcal X}}
\newcommand{\sY}{{\mathcal Y}}
\newcommand{\sZ}{{\mathcal Z}}
\newcommand{\A}{{\mathbb A}}
\newcommand{\B}{{\mathbb B}}
\newcommand{\C}{{\mathbb C}}
\newcommand{\D}{{\mathbb D}}
\newcommand{\E}{{\mathbb E}}
\newcommand{\F}{{\mathbb F}}
\newcommand{\G}{{\mathbb G}}
\newcommand{\HH}{{\mathbb H}}
\newcommand{\I}{{\mathbb I}}
\newcommand{\J}{{\mathbb J}}
\renewcommand{\L}{{\mathbb L}}
\newcommand{\M}{{\mathbb M}}
\newcommand{\N}{{\mathbb N}}
\renewcommand{\P}{{\mathbb P}}
\newcommand{\Q}{{\mathbb Q}}
\newcommand{\R}{{\mathbb R}}
\newcommand{\SSS}{{\mathbb S}}
\newcommand{\T}{{\mathbb T}}
\newcommand{\U}{{\mathbb U}}
\newcommand{\V}{{\mathbb V}}
\newcommand{\W}{{\mathbb W}}
\newcommand{\X}{{\mathbb X}}
\newcommand{\Y}{{\mathbb Y}}
\newcommand{\Z}{{\mathbb Z}}
\newcommand{\id}{{\rm id}}
\newcommand{\rank}{{\rm rank}}
\newcommand{\END}{{\mathbb E}{\rm nd}}
\newcommand{\End}{{\rm End}}
\newcommand{\Hom}{{\rm Hom}}
\newcommand{\Hg}{{\rm Hg}}
\newcommand{\tr}{{\rm tr}}
\newcommand{\Sl}{{\rm Sl}}
\newcommand{\Gl}{{\rm Gl}}
\newcommand{\Cor}{{\rm Cor}}
\newcommand{\Aut}{\mathrm{Aut}}
\newcommand{\dR}{\mathrm{dR}}
\newcommand{\Hig}{\mathrm{Hig}}
\newcommand{\Sym}{\mathrm{Sym}}
\newcommand{\ModuliCY}{\mathfrak{M}_{CY}}
\newcommand{\HyperCY}{\mathfrak{H}_{CY}}
\newcommand{\ModuliAR}{\mathfrak{M}_{AR}}
\newcommand{\Modulione}{\mathfrak{M}_{1,n+3}}
\newcommand{\Modulin}{\mathfrak{M}_{n,n+3}}
\newcommand{\Gal}{\mathrm{Gal}}
\newcommand{\Spec}{\mathrm{Spec}}
\newcommand{\res}{\mathrm{res}}
\newcommand{\coker}{\mathrm{coker}}
\newcommand{\Jac}{\mathrm{Jac}}
\newcommand{\HIG}{\mathrm{HIG}}
\newcommand{\MIC}{\mathrm{MIC}}
\newcommand{\Gr}{\mathrm{Gr}}
\newcommand{\Fil}{\mathrm{Fil}}
\maketitle
\noindent{\small {\bf Abstract.}
We generalize the logarithmic decomposition theorem of Deligne-Illusie to a filtered version. There are two applications. The easier one provides a mod $p$ proof for a vanishing theorem in characteristic zero. The deeper one  gives rise to  a positive characteristic analogue of a theorem of Deligne on the mixed Hodge structure attached to complex algebraic varieties.

\noindent{\bf Keywords} {\small Decomposition theorem, mixed Fontaine-Laffaille complex, spectral sequence, vanishing theorem, weight filtration.}

\noindent {\bf MSC} {\small 14F40, 14G17, 18G40.}
\setcounter{tocdepth}{1}
\tableofcontents

\section{Introduction}
Let $k$ be a perfect field of characteristic $p>0$,  and let $X$ be a smooth $k$-scheme equipped with a normal crossing divisor (NCD) $D\subset X$. Assume that $(X,D)$ is $W_2(k)$-liftable. By $\tau_{<p},F$, we mean the canonical truncation of complexes at degrees $<p$ and the relative Frobenius of $X$ over $k$, respectively. Let $(X',D')/k$ be the base change of $(X,D)$ via the Frobenius automorphism $F_k$ of $\mathrm{Spec}(k)$. Denote by $D^+(X')$ the bounded below derived category of $\mathcal{O}_{X'}$-modules. Deligne-Illusie \cite[Theorem 2.1]{DI} (see also \cite[Theorem 5.1]{IL02}, \cite[Theorem 10.16]{EV}) showed the following cohomological Hodge decomposition in positive characteristic.
\begin{theorem}[Deligne-Illusie]\label{D-I, decomposition}
$\tau_{<p}F_{*}\Omega^*_{X/k}(\log D)$ is decomposable in $D^+(X')$, i.e., we have
\begin{eqnarray}\label{DI DT}
\tau_{<p}F_{*}\Omega^*_{X/k}(\log D)\cong\bigoplus_{i=0}^{p-1}\Omega^i_{X'/k}(\log D')[-i]~\mathrm{in}~D^+(X').
\end{eqnarray}
\end{theorem}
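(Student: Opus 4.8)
The plan is to follow the method of Deligne--Illusie and exhibit an explicit morphism in $D^{+}(X')$ that realizes \eqref{DI DT}. Recall that the Cartier isomorphism gives a canonical identification $C^{-1}\colon \Omega^i_{X'/k}(\log D')\xrightarrow{\ \sim\ }\mathcal H^i\bigl(F_*\Omega^\bullet_{X/k}(\log D)\bigr)$, so the cohomology sheaves of the right-hand complex are exactly the summands on the left. Consequently it suffices to construct a morphism
\[
\varphi\colon \bigoplus_{i=0}^{p-1}\Omega^i_{X'/k}(\log D')[-i]\longrightarrow \tau_{<p}F_*\Omega^\bullet_{X/k}(\log D)\quad\text{in } D^{+}(X')
\]
that induces $C^{-1}$ on every cohomology sheaf $\mathcal H^i$ with $i<p$; such a $\varphi$ is automatically an isomorphism. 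The whole problem thereby reduces to building the degree-one piece $\varphi^1$ and extending it multiplicatively.

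First I would construct $\varphi^1$ locally from a lift of Frobenius. Fix a $W_2(k)$-lift $(\tilde X,\tilde D)$ and $(\tilde X',\tilde D')$, and choose an open cover $\{U_\alpha\}$ of $X$ on which the relative Frobenius admits a lift $\tilde F_\alpha$ compatible with the log structure, so that locally $\tilde F_\alpha^*$ carries the defining equation $\tilde t'$ of $\tilde D'$ to $\tilde t^{\,p}$ times a unit congruent to $1$ modulo $p$. Because $\tilde F_\alpha$ reduces to the $p$-th power map, $F^*$ vanishes on $\Omega^1_{X'/k}(\log D')$, so for a local lift $\tilde\omega$ of a log $1$-form $\omega$ the pullback $\tilde F_\alpha^*\tilde\omega$ is divisible by $p$; I set $\varphi^1_\alpha(\omega)=\tfrac1p\tilde F_\alpha^*\tilde\omega\bmod p$. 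A short check shows this is independent of the chosen lift $\tilde\omega$ and $\mathcal O_{X'}$-linear, and that its image is closed---because $F^*$ vanishes to order two on $2$-forms---so $\varphi^1_\alpha$ lands in $F_*Z^1$ and defines a chain map $\Omega^1_{X'/k}(\log D')[-1]\to F_*\Omega^\bullet_{X/k}(\log D)|_{U_\alpha}$. Evaluating on logarithmic coordinates $d\log t'$ confirms that it induces $C^{-1}$ on $\mathcal H^1$, the surplus exact terms coming from the unit being harmless modulo coboundaries.

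The main obstacle is to glue the locally defined $\varphi^1_\alpha$ into a single morphism in the derived category, and this is exactly where $W_2(k)$-liftability is used: it guarantees that the local Frobenius lifts exist in the first place. On an overlap $U_{\alpha\beta}$ the two lifts agree modulo $p$, so $\tfrac1p(\tilde F_\alpha^*-\tilde F_\beta^*)$ defines an $\mathcal O_{X'}$-linear map $h_{\alpha\beta}\colon \Omega^1_{X'/k}(\log D')\to F_*\mathcal O_X$, and one computes $\varphi^1_\alpha-\varphi^1_\beta=d\circ h_{\alpha\beta}$. Since the source complex is concentrated in degree one, $h_{\alpha\beta}$ is precisely a cochain homotopy between the two chain maps, and on triple overlaps the $h_{\alpha\beta}$ satisfy the additive cocycle relation $h_{\alpha\beta}+h_{\beta\gamma}=h_{\alpha\gamma}$. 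This descent datum patches the $\varphi^1_\alpha$ to a global morphism $\varphi^1$ in $D^{+}(X')$. The genuinely extra work relative to the smooth case is logarithmic bookkeeping: verifying throughout that every map respects the residue filtration and behaves correctly along $D$, which is what makes the compatibility of $\tilde F_\alpha$ with the divisor indispensable.

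Finally I would extend $\varphi^1$ multiplicatively. Taking $\varphi^0\colon \mathcal O_{X'}\to F_*\mathcal O_X$ to be the canonical Frobenius map and defining, for $1\le i<p$,
\[
\varphi^i(\omega_1\wedge\cdots\wedge\omega_i)=\varphi^1(\omega_1)\wedge\cdots\wedge\varphi^1(\omega_i),
\]
which is well-defined on $\Omega^i_{X'/k}(\log D')=\bigwedge^i\Omega^1_{X'/k}(\log D')$ because the target de Rham complex is graded-commutative and the right-hand side is therefore alternating, I obtain $\varphi=\bigoplus_{i=0}^{p-1}\varphi^i$. As $C^{-1}$ is an isomorphism of graded algebras, $\varphi$ induces $C^{-1}$ on each $\mathcal H^i$ for $i<p$ and is hence a quasi-isomorphism onto $\tau_{<p}F_*\Omega^\bullet_{X/k}(\log D)$, which is \eqref{DI DT}. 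The bound $i<p$ is exactly what the canonical truncation records: it is the range in which the term-by-term Cartier description, and hence this multiplicative construction, is available.
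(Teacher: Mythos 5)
Your local degree-one construction is sound and coincides with the paper's Lemma \ref{key lemma}: your $\varphi^1_\alpha=\tfrac1p\tilde F_\alpha^*$ is the $\zeta_\alpha$ there, and your $h_{\alpha\beta}$ satisfies $\zeta_\beta-\zeta_\alpha=dh_{\alpha\beta}$ together with the cocycle condition. The gap is in how you globalize. Morphisms in $D^+(X')$ do not satisfy descent, so the sentence ``this descent datum patches the $\varphi^1_\alpha$ to a global morphism in $D^{+}(X')$'' asserts the conclusion without supplying a mechanism. The actual device, which is the whole point of diagram \eqref{varphi}, is to change the target to the \v{C}ech complex $\check{\mathcal{C}}(\mathcal{U}',\tau_{<p}F_{*}\Omega^*_{X/k}(\log D))$: precisely because of the two identities you verified, the pair $(\{\zeta_\alpha\},\{h_{\alpha\beta}\})$ assembles into an honest map of complexes $\varphi^1_X=\varphi_X(0,1)\oplus\varphi_X(1,0)$ with components in \v{C}ech degrees $0$ and $1$, and one then inverts the augmentation quasi-isomorphism $\iota$ to land in $D^+(X')$.

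The second and more serious break is the multiplicative extension. The formula $\varphi^i(\omega_1\wedge\cdots\wedge\omega_i)=\varphi^1(\omega_1)\wedge\cdots\wedge\varphi^1(\omega_i)$ treats $\varphi^1$ as a global map of sheaves into $F_*\Omega^1_{X/k}(\log D)$, which it is not (only the $\varphi^1_\alpha$ are, and they disagree on overlaps), and one cannot wedge morphisms in the derived category. Once the target is the \v{C}ech complex, the available product is the cup product, which is associative but \emph{not} graded-commutative at the cochain level; hence $(\varphi^1)^{\cup i}$ is not alternating in $\omega_1,\dots,\omega_i$ and does not factor through $\Omega^i_{X'/k}(\log D')=\bigwedge^i\Omega^1_{X'/k}(\log D')$, so your appeal to graded-commutativity of the de Rham complex fails exactly where it is needed. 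This is why Construction \ref{construction varphi} sets $\varphi_X^i=(\varphi^1_X)^{\cup i}\circ\delta_i$, with $\delta_i$ the antisymmetrization section
\begin{equation*}
\omega_1\wedge\cdots\wedge\omega_i\;\longmapsto\;\frac{1}{i!}\sum_{\sigma\in S_i}\mathrm{sgn}(\sigma)\,\omega_{\sigma(1)}\otimes\cdots\otimes\omega_{\sigma(i)}
\end{equation*}
of the projection $\Omega^{\otimes i}_{X'/k}(\log D')\to\Omega^i_{X'/k}(\log D')$, which requires $i!$ to be invertible, i.e.\ $i\le p-1$. In particular your closing explanation of the bound is off: the Cartier isomorphism exists in \emph{all} degrees, and the truncation $\tau_{<p}$ is forced by the $1/i!$ in $\delta_i$, not by any failure of the term-by-term Cartier description.
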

There are various extensions of Theorem \ref{D-I, decomposition}, such as Ogus-Vologodsky \cite[Corollary 2.27]{OV}, Schepler \cite[Theorem 5.7]{Schepler}.  Recently, Illusie kindly informed us that he has generalized Theorem \ref{D-I, decomposition} to a lci scheme over $k$ by the derived de Rham complex. In this paper, we extend Theorem \ref{D-I, decomposition} to a filtered version,  following the spirit of Deligne-Illusie \cite{DI}. 
As shown in classical Hodge theory, one can define a weight filtration $W$ on the logarithmic de Rham complex $\Omega^*_{X/k}(\log D)$ in positive characteristic. On the other hand, one can use the similar way to construct a weight filtration  on $\bigoplus_{i}\Omega^i_{X'/k}(\log D')[-i]$,  again denoted by $W$. 
Note that both sides of \eqref{DI DT} inherit a filtration,  by abuse of notation, we still write them by the same letter $W$. Denote by $D^+F(X')$ the bounded below derived category of filtered $\mathcal{O}_{X'}$-modules. We prove a filtered version of Theorem \ref{D-I, decomposition}, which can be regarded as a cohomological mixed Hodge decomposition in positive characteristic.
\begin{theorem}\label{FSDT}
$(\tau_{<p}F_{*}\Omega^*_{X/k}(\log D),W)$ is decomposable in $D^+F(X')$. More precisely, for any $W_2(k)$-lifting $(\tilde X,\tilde D)/W_2(k)$ of $(X,D)/k$, we have an isomorphism in $D^+F(X')$:
 \begin{eqnarray*}
\Psi_{(\tilde X,\tilde D)}:(\bigoplus_{i=0}^{p-1}\Omega^i_{X'/k}(\log D')[-i],W)\to(\tau_{<p}F_{*}\Omega^*_{X/k}(\log D),W).
\end{eqnarray*}
\end{theorem}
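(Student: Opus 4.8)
The plan is to upgrade the Deligne--Illusie morphism to a filtered one and then verify that it is a filtered quasi-isomorphism by passing to the graded pieces for $W$. First I would recall the construction of the underlying unfiltered map. Working Zariski-locally on $X'$, over each small affine $U$ choose a lifting $\tilde F_U$ of the relative Frobenius compatible with the logarithmic structures of $(\tilde X,\tilde D)$; such liftings exist after shrinking precisely because the $W_2(k)$-lifting $(\tilde X,\tilde D)$ is given. Each $\tilde F_U$ produces the degree-one map $\psi^1_{\tilde F_U}\colon\Omega^1_{X'/k}(\log D')|_U\to F_*Z^1|_U$ into closed one-forms, sending a class $\alpha'$ to $\tfrac1p\tilde F_U^*\alpha'$ reduced mod $p$; the full map is then built from $\psi^1$ by the multiplicative structure of the de Rham complex, with the local data glued along the homotopies that measure the difference of two Frobenius liftings. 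This is exactly the logarithmic refinement of \cite{DI} underlying Theorem \ref{D-I, decomposition}, which furnishes the map $\Psi_{(\tilde X,\tilde D)}$ at the level of the unfiltered derived category.

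Second, and this is the heart of the filtered statement, I would verify that every ingredient of this construction is strict for $W$. The decisive local computation is that for a log coordinate $t_i$ (a local equation of a branch of $D$) one has $\tilde F_U^* t_i' = t_i^p$ up to a unit congruent to $1$ mod $p$, so that $\tfrac1p\tilde F_U^*(d\log t_i')$ equals $d\log t_i$ plus a regular exact form; while for a regular coordinate $t_j$ one gets $t_j^{p-1}dt_j$ plus a regular form. Hence $\psi^1_{\tilde F_U}$ carries $W_0\Omega^1 = \Omega^1_{X'/k}$ into regular forms and $W_1\Omega^1 = \Omega^1_{X'/k}(\log D')$ into log forms, i.e.\ it is a morphism of filtered sheaves. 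Since $W$ is multiplicative ($W_a\wedge W_b\subseteq W_{a+b}$) and $\Psi$ is assembled multiplicatively from $\psi^1$, the resulting $\Psi$ respects $W$ in every degree; the same estimate shows that the gluing homotopies are filtered, so that $\Psi_{(\tilde X,\tilde D)}$ is a genuine morphism in $D^+F(X')$.

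Finally I would prove that $\Psi$ is a filtered isomorphism by showing that $\Gr^W_m\Psi$ is a quasi-isomorphism for each $m$; by the standard criterion a filtered map that is a quasi-isomorphism on every graded piece is an isomorphism in $D^+F(X')$. Via the Poincar\'e residue isomorphism one has $\Gr^W_m\Omega^*_{X/k}(\log D)\cong(a_m)_*\Omega^*_{D^{(m)}/k}[-m]$, where $D^{(m)}$ denotes the smooth scheme of $m$-fold intersections of the branches of $D$ and $a_m$ its closed immersion, and likewise on the $X'$ side with $(D')^{(m)}$; both $D^{(m)}$ and $(D')^{(m)}$ inherit a $W_2(k)$-lifting from $(\tilde X,\tilde D)$, and they carry no residual boundary. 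I would then identify, through the same coordinate computation, the morphism induced by $\Gr^W_m\Psi$ with the Deligne--Illusie map of the smooth stratum $D^{(m)}$, so that Theorem \ref{D-I, decomposition} applied to $D^{(m)}$ (with empty divisor) yields the desired quasi-isomorphism on each graded piece.

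The main obstacle is this last identification: one must check that the residue isomorphism intertwines $\Gr^W_m\Psi$ with the unfiltered decomposition morphism of the stratum $D^{(m)}$, which requires tracking how $\psi^1$ and the Cartier operator behave under taking residues along the branches of $D$ --- in particular the compatibility of the chosen Frobenius liftings of $X$ with induced liftings on the strata, and the signs introduced by the residue maps. Once this compatibility is established, the decomposition theorem for each smooth stratum delivers the filtered statement.
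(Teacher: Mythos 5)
Your strategy coincides, in outline, with the paper's proof of Proposition \ref{varphi quasi-iso}, from which Theorem \ref{FSDT} is deduced: check the Deligne--Illusie morphism is filtered, reduce (\'etale-locally) to a single global log lifting $\tilde F$, decompose the $W$-graded pieces by Poincar\'e residues as $\Gr^W_l\Omega^*_{X/k}(\log D)\cong\bigoplus_{|I|=l}i_{D_I*}\Omega^*_{D_I/k}[-l]$ (Lemma \ref{decomposition varphi}), identify $\Gr^W_l$ of the morphism with the morphisms attached to the restricted liftings $\tilde F_I$, and conclude by the Cartier isomorphism on the closed strata. The ``main obstacle'' you flag at the end is real but surmountable exactly as you suspect: a log lifting satisfies $\tilde F^*\tilde D'=p\tilde D$, hence restricts to each $\tilde D_I$, and the square intertwining $\mathrm{Res}_{D'_I}$, $\mathrm{Res}_{D_I}$, $\tilde\varphi_{\tilde F}$ and $\tilde\varphi_{\tilde F_I}$ commutes by a direct coordinate computation; your claim that the gluing homotopies are filtered is, in the paper, the content of Lemma \ref{homotopy formula} (applied with $\tilde f=\mathrm{id}$), which replaces the \v{C}ech-glued $\varphi_X$ by the single-lifting morphism $\tau_{<p}\tilde\varphi_{\tilde F}$ and makes the residue compatibility a clean diagram rather than a computation in the \v{C}ech complex.

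There is, however, one genuine gap: you never confront the truncation $\tau_{<p}$. The theorem concerns $(\tau_{<p}F_*\Omega^*_{X/k}(\log D),W)$ with the \emph{induced} filtration, and $\Gr^W_l$ does not commute with $\tau_{<p}$: in degree $p-1$ the graded piece of the truncated complex is $(W_l\Omega^{p-1}\cap\ker d)/(W_{l-1}\Omega^{p-1}\cap\ker d)$, whereas the truncation of the graded complex has $\{x\in W_l\Omega^{p-1}:dx\in W_{l-1}\Omega^{p}\}/W_{l-1}\Omega^{p-1}$ there. Your residue isomorphism computes $\Gr^W_m$ of the \emph{untruncated} complex, so to run your final step you need the natural map $\Gr^W_l\tau_{<p}F_*\Omega^*_{X/k}(\log D)\to\tau_{<p}\Gr^W_lF_*\Omega^*_{X/k}(\log D)$ to be a quasi-isomorphism, and this is not formal: surjectivity on $\mathcal{H}^{p-1}$ is proved in the paper (Lemma \ref{quasi-iso truncation case}) by an explicit local argument that invokes the Cartier isomorphism on the strata to correct a form $x\in W_l\Omega^{p-1}_{X/k}(\log D)$ with $dx\in W_{l-1}$ into a genuinely closed representative modulo $W_{l-1}$ and exact terms. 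Without this lemma your identification of $\Gr^W_m\Psi$ with the Deligne--Illusie map of the stratum breaks down precisely in top degree $p-1$ (note also that after the shift $[-m]$ the relevant truncation on $D^{(m)}$ is $\tau_{<p-m}$, a harmless but necessary bookkeeping point), so the chain ``filtered map, quasi-isomorphism on every graded piece, hence isomorphism in $D^+F(X')$'' does not yet close as written.
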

There is an easy application of Theorem \ref{FSDT}, which gives rise to a mod $p$ proof of the following theorem.
\begin{theorem}\label{vanishing theorem}
Let $X$ be a smooth projective variety over a field $K$ of characteristic $0$, $D\subset X$ a NCD, $L$ an ample line bundle on $X$. Then for any $l$, we have
\begin{eqnarray*}
\mathbb{H}^i(X, W_l\Omega_{X/K}^j(\log D)\otimes L)=0\quad\mathrm{for}~i+j>\mathrm{dim}(X).
\end{eqnarray*}
\end{theorem}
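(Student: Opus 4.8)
The plan is to deduce the characteristic-zero statement from its positive-characteristic analogue by the spreading-out technique of Deligne--Illusie, the positive-characteristic input being Theorem \ref{FSDT}. First I would choose a finitely generated $\Z$-subalgebra $R\subset K$ over which the whole package $(X,D,L)$ has a model $(X_R,D_R,L_R)$ with $X_R/\Spec R$ smooth projective, $D_R\subset X_R$ a relative NCD, and $L_R$ relatively ample; shrinking $\Spec R$ I may assume $R$ is smooth over $\Z$, so that every closed point $s$ with residue field $k=k(s)$ of characteristic $p$ yields a fiber $(X_s,D_s)$ that is $W_2(k)$-liftable with $L_s$ ample. Since $D_R$ is a relative NCD, $W_l\Omega^j_{X_R/R}(\log D_R)$ is a subbundle, so its formation commutes with base change; by upper semicontinuity of fiber cohomology it then suffices to prove, for a Zariski-dense set of closed points $s$, the vanishing $H^i\big(X_s,W_l\Omega^j_{X_s/k}(\log D_s)\otimes L_s\big)=0$ for $i+j>\dim X$ — concretely, for all $s$ whose residue characteristic $p$ is large, say $p>\dim X$.

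So fix such an $s$ and work over $k$ with $p>\dim X$. Write $F$ for the relative Frobenius, $(X',D')$ for the Frobenius twist, and $L'$ for the twist of $L_s$, so that $F^*L'\cong L_s^{p}$. Because $p>\dim X$ the truncation $\tau_{<p}$ is inert, and Theorem \ref{FSDT} provides a filtered isomorphism in $D^+F(X')$ between $(\bigoplus_i\Omega^i_{X'/k}(\log D')[-i],W)$ and $(F_*\Omega^*_{X_s/k}(\log D_s),W)$. Tensoring this filtered isomorphism with the line bundle $L'$ is exact and preserves $W$, and on the right the projection formula identifies $F_*\Omega^*_{X_s/k}(\log D_s)\otimes L'$ with $F_*\!\big(\Omega^*_{X_s/k}(\log D_s)\otimes L_s^{p}\big)$ compatibly with $W$. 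Passing to the $l$-th filtered step $W_l$ (a genuine subcomplex on each side, since the differential preserves $W$) and using that $F_*$ is exact and affine, I would then obtain for every $m$ an isomorphism
\begin{eqnarray*}
\bigoplus_{a+q=m} H^{q}\!\big(X',W_l\Omega^a_{X'/k}(\log D')\otimes L'\big)\;\cong\;\mathbb{H}^{m}\!\big(X_s,W_l\Omega^*_{X_s/k}(\log D_s)\otimes L_s^{p}\big).
\end{eqnarray*}

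It then remains to bound the right-hand hypercohomology. Its spectral sequence has $E_1^{a,q}=H^{q}(X_s,W_l\Omega^a_{X_s/k}(\log D_s)\otimes L_s^{p})$, and since $L_s$ is ample with $p$ large, Serre vanishing kills the rows $q>0$; here I must invoke a \emph{uniform} Serre vanishing over the family $\Spec R$ (constructibility of the bound) so that one inequality on $p$ works simultaneously for a dense set of $s$. The spectral sequence thus collapses to the row $q=0$, which is concentrated in degrees $a=0,\dots,\dim X$, forcing $\mathbb{H}^{m}=0$ for $m>\dim X$. Combined with the displayed isomorphism and the direct-sum decomposition, this gives $H^{q}(X',W_l\Omega^a_{X'/k}(\log D')\otimes L')=0$ whenever $a+q>\dim X$; as $X'$ is the Frobenius twist of $X_s$ over the perfect field $k$, the same vanishing holds for $(X_s,D_s,L_s)$, completing the positive-characteristic step and hence, by the reduction above, the theorem. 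The main obstacle is conceptual rather than computational: everything hinges on Theorem \ref{FSDT} delivering the decomposition \emph{filtered}, so that taking $W_l$ commutes with the splitting and isolates the individual weighted pieces — without this the argument would only control the full complex $\Omega^*(\log D)$, not its weight-filtration steps. The remaining care is bookkeeping: verifying that the projection formula, the exactness of $F_*$, and the tensor by $L'$ are all compatible with $W$, and securing the uniform Serre vanishing needed for the spreading-out.
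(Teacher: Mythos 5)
Your proposal is correct, and its outer skeleton is the same as the paper's: the paper obtains this theorem by combining the filtered decomposition (Theorem \ref{FSDT}, via Proposition \ref{varphi quasi-iso}) with a characteristic-$p$ filtered vanishing statement (Theorem \ref{filtered vanishing p}), proved by an argument modelled on Raynaud's proof of Kodaira--Akizuki--Nakano vanishing \cite[Theorem 5.8]{IL02}, and then spreads out as in \cite[\S 6]{IL02} --- precisely your reduction, including the observation that $W_l\Omega^j_{X_R/R}(\log D_R)$ is a subbundle whose formation commutes with base change, and the semicontinuity step needing only a dense set of closed fibers. Where you genuinely diverge is inside the characteristic-$p$ step. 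Raynaud's argument iterates Frobenius: from the filtered decomposition one gets $\sum_{a+q=m}h^{a,q}(L)\le \sum_{a+q=m}h^{a,q}(L^p)\le\cdots\le\sum_{a+q=m}h^{a,q}(L^{p^n})$, and for $n\gg 0$ ordinary Serre vanishing over the fixed field $k$ applies, so the paper's Theorem \ref{filtered vanishing p} holds for \emph{every} $p>\dim X$ and \emph{every} ample $L$, with no uniformity input. You instead take a single Frobenius step and compensate by requiring $p$ itself to be large, invoking uniform relative Serre vanishing over $\mathrm{Spec}(R)$; this is legitimate --- for $X_R\to \mathrm{Spec}(R)$ projective with $L_R$ relatively ample and $W_l\Omega^j_{X_R/R}(\log D_R)\otimes L_R$ coherent and $R$-flat, there is a single $n_0$ with fiberwise $H^q(X_s,W_l\Omega^a_{X_s/k(s)}(\log D_s)\otimes L_s^n)=0$ for all $q>0$, $n\ge n_0$ and all $s$, and closed points of residue characteristic at least $\max(n_0,\dim X+1)$ are dense, which is all your semicontinuity argument needs --- so your proof of the characteristic-zero theorem is complete (your $W_2(k)$-liftability via smoothness of $R$ over $\Z$, the projection formula $F_*\Omega^*\otimes L'\cong F_*(\Omega^*\otimes L^p)$ compatibly with $W$, and passage to $W_l$ through the zig-zag of filtered quasi-isomorphisms are all sound). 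The trade-off is that your intermediate characteristic-$p$ statement is strictly weaker than Theorem \ref{filtered vanishing p}: you only get vanishing for $p\gg 0$ depending on the arithmetic family, not for each fixed $p>\dim X$ and arbitrary ample $L$, whereas the iteration trick would give the full statement and spare you the uniform-Serre discussion; in exchange, your single-step version is a bit cleaner and makes transparent that the only nontrivial input is exactly what you flag --- that Theorem \ref{FSDT} delivers the decomposition \emph{filtered}, so that the steps $W_l$ on the two sides can be compared.
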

Note that if $D$ is a simple normal crossing divisor (SNCD), then this theorem is an easy consequence of Kodaira-Akizuki-Nakano vanishing theorem (see \cite[Theorem 6.10]{IL02}). 

Another deeper application of Theorem \ref{FSDT} concerns a de Rham analogue of the following theorem in positive characteristic.
\begin{theorem}[Deligne {{\cite[Theorem  8.2.2]{D3}}}]\label{Deligne}The cohomology of smooth complex algebraic varieties is endowed with a mixed Hodge structure (MHS) functorial with respect to algebraic maps.
\end{theorem}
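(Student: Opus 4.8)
The plan is to reduce the statement to the case of a smooth variety and then to establish the necessary degeneration phenomena via the filtered decomposition theorem, following the Deligne--Illusie philosophy. First I would treat a \emph{smooth} variety $U$ over $\C$. By resolution of singularities I may choose a smooth projective $X$ together with an NCD $D$ so that $U=X\setminus D$, and identify $H^n(U,\C)=\HH^n(X,\Omega^*_X(\log D))$. I equip this hypercohomology with two filtrations: the Hodge filtration $F$ induced by the naive (stupid) filtration of the log de Rham complex, and the weight filtration $W$ induced by the order of log poles (equivalently by the residue/Gysin spectral sequence). The $\Q$-structure and the rational weight filtration on $H^n(U,\Q)$ come from the Leray/residue spectral sequence of the open inclusion $U\hookrightarrow X$. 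The assertion to be proved is that $(H^n(U,\Q),W,F)$ is a MHS, i.e.\ that $\Gr^W_m H^n(U)$ carries a pure Hodge structure of weight $m$ for every $m$.

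Second, the heart of the matter is a twofold degeneration: the Hodge-to-de Rham spectral sequence of $\Omega^*_X(\log D)$ degenerates at $E_1$, and the weight spectral sequence degenerates at $E_2$, with $\Gr^W$ computed by the cohomologies of the intersections $D^{(k)}$ of components of $D$. Here I would invoke the mod $p$ method. I spread $(X,D)$ out to a smooth family over a finitely generated $\Z$-subalgebra $R\subset\C$; after inverting finitely many primes, every closed fibre $(X_s,D_s)$ is defined over a perfect field $k$ of characteristic $p>\dim X$ and is $W_2(k)$-liftable. Applying Theorem \ref{FSDT} to $(X_s,D_s)$ yields a \emph{filtered} decomposition of $\tau_{<p}F_*\Omega^*_{X_s/k}(\log D_s)$ respecting $W$. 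Counting dimensions of the filtered pieces forces the $E_1$-degeneration of the Hodge spectral sequence and the $E_2$-degeneration of the weight spectral sequence in characteristic $p$; since these are equalities of dimensions of coherent cohomology groups, they are stable under the flat base change back to the generic fibre and hence hold over $\C$.

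Third, with degeneration in hand the two filtrations interact correctly. The $E_2$-degeneration identifies $\Gr^W_m H^n(U,\C)$ with a subquotient of $\bigoplus_k H^{n-k}(D^{(k)})$ (up to Tate twists), each summand being a pure Hodge structure by classical Hodge theory on the smooth projective $D^{(k)}$; the filtration induced by $F$ is the corresponding Hodge filtration, so $\Gr^W_m$ is pure of weight $m$, settling the smooth case. For a general (singular, possibly non-proper) variety I would pass to a smooth proper hypercovering $U_\bullet\to U$ by cohomological descent, apply the smooth case fibrewise, and assemble the result through the formalism of (cohomological) mixed Hodge complexes; functoriality with respect to algebraic maps then follows from functoriality of the hypercovering construction, using that any two choices are dominated by a common refinement.

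The hard part will be precisely the compatibility invoked in the second and third steps: upgrading an \emph{object-level} decomposition to one that strictly respects the weight filtration, and then transporting the resulting degeneration faithfully from characteristic $p$ back to characteristic $0$. This is exactly where the filtered strengthening in Theorem \ref{FSDT} is indispensable. The unfiltered Deligne--Illusie theorem gives $E_1$-degeneration of the Hodge spectral sequence but says nothing about $W$, whereas the filtered isomorphism $\Psi_{(\tilde X,\tilde D)}$ yields the strictness of $W$ with respect to the differentials, which is the precise input needed to conclude that $\Gr^W$ is pure and thus that the two filtrations define a mixed Hodge structure.
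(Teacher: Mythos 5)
Your proposal founders at its decisive second step: no dimension count deduces $E_2$-degeneration of the weight spectral sequence from Theorem \ref{FSDT}, and the paper says so explicitly (``we can not prove the $E_2$-degeneration \dots since there has no appropriate positive characteristic analogue of the action of complex conjugation on Hodge structures''). What the filtered decomposition gives you, after taking hypercohomology, is an isomorphism from the $E_1$-page onward between the weight spectral sequence of $(K^*_\dR,W)$ and that of its Frobenius-twisted associated graded $(K^*_\Hig,W)$ --- this is precisely the content of Theorem \ref{main theorem}. An isomorphism of spectral sequences transports differentials; it does not kill them: the Higgs-side weight spectral sequence has the same shape (its $d_1$ consists of Gysin-type connecting maps, already nonzero in the classical situation), and there is neither an a priori reason nor an independent computation showing that its $d_2$ vanishes. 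The counting argument succeeds for the Hodge filtration because the inequality $\sum_{i+j=n}\dim H^j(X,\Omega^i_{X/k}(\log D))\geq\dim\HH^n(X,\Omega^*_{X/k}(\log D))$ holds a priori and the decomposition forces equality; for $W$ the analogous a priori comparison is between $E_1$ and $E_\infty$, and equality there is simply false (the weight spectral sequence genuinely does not degenerate at $E_1$), while $\dim E_2=\dim E_\infty$ is exactly the assertion at stake, with no coherent-cohomology formula for the $E_2$-terms available. Since you never obtain the degeneration in characteristic $p$, there is nothing to transport back to characteristic $0$.

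Note that the paper does not reprove Theorem \ref{Deligne}; it cites Deligne and sketches his argument, whose logical order is the reverse of yours: one first shows, via the two-filtrations lemma, that each $E_r$-term ($r\geq1$) of the weight spectral sequence, equipped with the recursive filtration $\Fil_{\mathrm{rec}}$ and its rational structure, is a \emph{pure} Hodge structure (purity coming from classical Hodge theory on the smooth projective strata $D^{(k)}$), and only then concludes $d_r=0$ for $r\geq2$ because $d_r$ is a morphism of pure Hodge structures whose weights differ by $r-1\geq 1$ --- a vanishing that uses complex conjugation and is irreducibly transcendental. Your mod $p$ input can legitimately replace the analytic proof of the $E_1$-degeneration of the Hodge spectral sequence, and Theorem \ref{FSDT} does supply the strictness of $\Fil$ along $W$ that the recursive-filtration formalism needs; but purity, and hence the weight degeneration, must still be imported from characteristic-zero Hodge theory, and the identification of the rational weight filtration (via the Leray spectral sequence of $U\hookrightarrow X$) with the log-pole filtration is a further ingredient untouched by reduction mod $p$. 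As written, the heart of your proof asserts precisely the statement the paper records as unprovable by these methods, so the argument fails.
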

We briefly review Deligne's proof. Let $X$ be temporarily a smooth complex algebraic variety. By Nagata's compactification theorem and Hironaka's desingularization theorem, there exists a smooth proper complex algebraic variety $\overline X$ endowed with a simple normal crossing divisor $D$ such that $X=\overline X-D$. It can be shown that the cohomology of $X$ is computed by the hypercohomology of the logarithmic de Rham complex $\Omega^*_{\overline X/\mathbb{C}}(\log D)$. This complex carries two filtrations, i.e., the weight filtration $W$ and the Hodge filtration $\Fil$. A key result says that the bifiltered complex $(\Omega^*_{\overline X/\mathbb{C}}(\log D),W,\Fil)$ is a cohomological mixed Hodge complex. It follows that
$$(K^*,W,\Fil):=R\Gamma(X,(\Omega^*_{\overline X/\mathbb{C}}(\log D),W,\Fil))$$
 is a Hodge complex. Denote by $\{E^{i,j}_r,d_r^{i,j}\}_{r\geq0}$ the spectral sequence of $(K^*,W)$. Then:
 \begin{itemize}
 \item[-\ ] the filtration $\Fil$ on $K^*$ induces a recursive filtration $\Fil_{\mathrm{rec}}$ on $E^{i,j}_r$ such that for any $r\geq 1$, $(E^{i,j}_r,\Fil_{\mathrm{rec}})$ is a pure Hodge structure and $d_r^{i,j}$ is a morphism of Hodge structures;
 \item[-\ ] this spectral sequence degenerates at $E_2$;
 \item[-\ ] $\{E^{i,j}_{\infty}\}$ gives rise to the mixed Hodge structure of the cohomology of $X$.
 \end{itemize}
The discussion above is functorial with respect to algebraic maps.

In positive characteristic $p$, we try to follow the line of Deligne's proof. Let $\mathrm{Sch}^{\mathrm{sp}}_{W_2(k),\log}$ be the category of smooth proper $W_2(k)$-schemes of dimension less than $p$ endowed with a normal crossing divisor. Pick $(\tilde X,\tilde D)\in \mathrm{Sch}^{\mathrm{sp}}_{W_2(k),\log}$ with closed fiber $(X,D)/k$. We introduce the notion of cohomological mixed Fontaine-Laffaille complexes over $X$, which can be regarded as a positive characteristic analogue of the notion of cohomological mixed Hodge complexes. In particular, we show that $(\Omega^*_{X/k}(\log D),W,\Fil,\Psi_{(\tilde X,\tilde D)})$ is such a  bifiltered complex. Set 
$$(K^*_\dR,W,\Fil,\psi):=R\Gamma(X,(\Omega^*_{X/k}(\log D),W,\Fil,\Psi_{(\tilde X,\tilde D)})),$$
then we obtain a mixed Fontaine-Laffaille complex over $k$. Denote by
 \begin{eqnarray}\label{E functor}
 \{E_{r,\dR}^{-i,j},d^{-i,j}_{r,\dR}\}_{r\geq1},~ \{E_{r,\Hig}^{-i,j},d^{-i,j}_{r,\Hig}\}_{r\geq 1}
 \end{eqnarray}
 the spectral sequence of $(K_\dR^*,W),(K_\Hig^*,W):=F_k^*\Gr_\Fil(K_\dR,W)$, respectively. Then:
 \begin{itemize}
 \item[-\ ] the filtration $\Fil$ on $K_\dR^*$ induces a recursive filtration $\Fil_{\mathrm{rec}}$ on $E^{i,j}_{r,\dR}$ such that for any $r\geq 1$, $F_k^*\Gr_{\Fil_{\mathrm{rec}}}$ sends $d^{i,j}_{r,\dR}$ to $d^{i,j}_{r,\Hig}$;
  \item[-\ ] $\psi$ induces an isomorphism of spectral sequences
  $$\psi:\{E_{r,\Hig}^{-i,j},d^{-i,j}_{r,\Hig}\}_{r\geq 1}\to\{E_{r,\dR}^{-i,j},d^{-i,j}_{r,\dR}\}_{r\geq1}.$$
 \end{itemize}
 Unfortunately, we can not prove the $E_2$-degeneration of $ \{E_{r,\dR}^{-i,j},d^{-i,j}_{r,\dR}\}_{r\geq1}$ as in classical Hodge theory, since there has no appropriate positive characteristic  analogue of the action of complex conjugation on Hodge structures. Using Lemma \ref{homotopy formula}, we show that the discussion above is functorial with respect to morphisms in $\mathrm{Sch}^{\mathrm{sp}}_{W_2(k),\log}$.
 
 Set $E_{r\geq 1}/\mathrm{Vec}_k$ to be the addtive category of spectral sequences starting from $E_1$-page over the category $\mathrm{Vec}_k$ of $k$-vector spaces. Note that the Frobenius automorphism $F^*_k$ of $k$ induces an automorphism of $E_{r\geq 1}/\mathrm{Vec}_k$, again denoted by $F_k^*$. The following theorem is a positive characteristic analogue of Theorem \ref{Deligne}.
  \begin{theorem}\label{main theorem}
 \eqref{E functor} defines functors
  \begin{eqnarray*}\label{dR Hig functors}
\rho_\dR,~\rho_{\Hig}:(\mathrm{Sch}^{\mathrm{sp}}_{W_2(k),\log})^{\mathrm{op}}\to E_{r\geq1}/\mathrm{Vec}_k.
  \end{eqnarray*}
  There have a natural Hodge filtration $\Fil$ on $\rho_\dR$ such that
  \begin{eqnarray*}
  \rho_{\Hig}=F_k^*\circ\Gr_\Fil\rho_{\dR}
  \end{eqnarray*}
  and an isomorphism of functors
  $$\psi:\rho_\Hig\to\rho_\dR.$$
We call $(\rho_\dR,\rho_\Hig,\Fil,\psi)$ a Fontaine-Laffaille quadruple over the category  $\mathrm{Sch}^{\mathrm{sp}}_{W_2(k),\log}$.
\end{theorem}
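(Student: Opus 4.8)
The plan is to assemble the two functors from the weight spectral sequence of the mixed Fontaine-Laffaille complex, and to reduce the delicate functoriality in the extra data to the homotopy invariance of spectral sequences past the $E_1$-page. First I would fix, for each object $(\tilde X,\tilde D)$ of $\mathrm{Sch}^{\mathrm{sp}}_{W_2(k),\log}$ with closed fiber $(X,D)/k$, the bifiltered complex together with the decomposition $(\Omega^*_{X/k}(\log D),W,\Fil,\Psi_{(\tilde X,\tilde D)})$ furnished by Theorem \ref{FSDT}, and apply $R\Gamma(X,-)$ to obtain the mixed Fontaine-Laffaille complex $(K^*_\dR,W,\Fil,\psi)$. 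Setting $\rho_\dR(\tilde X,\tilde D)$ and $\rho_\Hig(\tilde X,\tilde D)$ to be the weight spectral sequences of $(K^*_\dR,W)$ and $(K^*_\Hig,W)$ respectively, and reading off $\Fil_{\mathrm{rec}}$, the identity $\rho_\Hig=F_k^*\Gr_\Fil\rho_\dR$, and the isomorphism $\psi$ from the two bullet properties recalled in the introduction, pins down the value of the quadruple on objects. The internal pointwise structure here is inherited directly from the single-object mixed Fontaine-Laffaille formalism, so no new argument is needed on objects.

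The content is therefore the functoriality in morphisms. Given $f\colon(\tilde X_1,\tilde D_1)\to(\tilde X_2,\tilde D_2)$ over $W_2(k)$, I would pull back along the closed fiber $\bar f$ to obtain a map of logarithmic de Rham complexes $\bar f^*\colon\bar f^{-1}\Omega^*_{X_2/k}(\log D_2)\to\Omega^*_{X_1/k}(\log D_1)$ that is strictly compatible with both $W$ and $\Fil$; applying $R\Gamma$ yields a morphism of weight spectral sequences $\rho_\dR(\tilde X_2)\to\rho_\dR(\tilde X_1)$, and likewise on the Higgs side after applying $F_k^*\Gr_\Fil$. Respect for composition and identities is immediate at the level of complexes, and descends to the spectral sequences because a filtered morphism induces a morphism of spectral sequences functorially from $E_1$ onward.

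The main obstacle is matching the comparison isomorphism $\psi$ with pullback. The decomposition $\Psi_{(\tilde X_i,\tilde D_i)}$ is built from the chosen lifting and is only canonical in the filtered derived category, so there is no reason for $\bar f^*$ to intertwine $\Psi_{(\tilde X_1)}$ and $\Psi_{(\tilde X_2)}$ at the level of complexes, and the square expressing $\psi_1\circ\bar f^*=\bar f^*\circ\psi_2$ need not commute before passing to cohomology. This is exactly what Lemma \ref{homotopy formula} repairs: it supplies an explicit filtered homotopy between the two composites $\Psi_{(\tilde X_1)}\circ\bar f^*$ and $\bar f^*\circ\Psi_{(\tilde X_2)}$. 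Since filtered-homotopic maps induce the same morphism of spectral sequences from the $E_1$-page onward, the homotopy ambiguity evaporates after passing to $\rho_\dR$ and $\rho_\Hig$, and the required square commutes; the same mechanism shows the commutation is itself compatible with composition. I expect this step — upgrading the homotopy formula to coherent compatibility of $\psi$ with every morphism — to absorb most of the work, whereas the relation $\rho_\Hig=F_k^*\Gr_\Fil\rho_\dR$ and the construction of $\Fil_{\mathrm{rec}}$ come for free from the object-level structure, precisely because we deliberately do not claim any $E_2$-degeneration.
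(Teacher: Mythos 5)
Your overall architecture does track the paper's: values on objects via Theorem \ref{FSDT} followed by $R\Gamma$ (producing a mixed Fontaine-Laffaille complex over $k$), morphisms via filtered pullback of logarithmic de Rham complexes, and compatibility of $\psi$ with pullback via the filtered homotopy of Lemma \ref{homotopy formula}, whose ambiguity indeed disappears on spectral sequences from the $E_1$-page onward (this is exactly the paper's \S4.4). However, there is a genuine gap where you declare that the object-level structure --- the recursive filtration $\Fil_{\mathrm{rec}}$, the identity $\rho_\Hig=F_k^*\circ\Gr_\Fil\rho_\dR$, and the isomorphism of spectral sequences $\psi$ --- ``comes for free'' by reading off the two bullet properties recalled in the introduction. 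Those bullets are not available inputs: they are precisely the content of Theorem \ref{spectral sequence}, which the paper proves as the core of the proof of the main theorem, so appealing to them is circular. Concretely, for $\{\Fil^l_{\mathrm{rec}}E^{i,j}_{r,\dR},d^{i,j}_{r,\dR}\}$ to be a spectral sequence at all, and for $F_k^*\Gr_{\Fil_{\mathrm{rec}}}$ to carry $(E^{i,j}_{r,\dR},d^{i,j}_{r,\dR})$ to $(E^{i,j}_{r,\Hig},d^{i,j}_{r,\Hig})$ on \emph{every} page, one needs each $d^{i,j}_{r,\dR}$ to be strictly compatible with $\Fil_{\mathrm{rec}}$; without strictness, $\Gr_{\Fil_{\mathrm{rec}}}$ does not commute with passing to cohomology of a page, and the three filtrations $\Fil_d,\Fil_{d^*},\Fil_{\mathrm{rec}}$ of Definition \ref{three filtration} need not coincide.

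Since no $E_2$-degeneration is available (as you correctly note), this strictness cannot be truncated and must be propagated through all pages by induction on $r$. The paper does this in the proof of Theorem \ref{spectral sequence}: the isomorphism $\psi$ makes each page $(E^{i,j}_{r,\dR},\Fil_{\mathrm{rec}},\psi)$ a Fontaine-Laffaille module and each $d^{i,j}_{r,\dR}$ a morphism in $\mathrm{MF}(k)$; Faltings' theorem (Theorem \ref{MF abelian}) that $\mathrm{MF}(k)$ is abelian with strict morphisms then supplies the strictness, which feeds Deligne's two filtrations lemma (Lemma \ref{Deligne filtration}) together with the $E_1$-degenerations of Lemma \ref{W E_1} to advance the induction and construct the comparison isomorphisms $\mu$ page by page. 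This induction is the positive-characteristic surrogate for the strictness that the category of Hodge structures provides in Deligne's argument, and it is exactly what your proposal omits; only after it is in place does Lemma-Definition \ref{definition rho MFLC} make the Hodge filtration on $\rho_\dR$ well defined. Your judgment that the homotopy step absorbs most of the work therefore inverts the actual weight of the proof: the functoriality step you describe is correct but comparatively routine, while the object-level claims you waved through are the theorem's real content.
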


\subsection*{Acknowledgements}
I am deeply indebted to Luc Illusie, Junchao Shentu, Mao Sheng, Jilong Tong, Qizheng Yin, Lei Zhang,  Kang Zuo for their useful comments and suggestions on this paper. I appreciate the highly effective help of the anonymous referees which improves the readability  of this paper in a large extent. I also thank Jijian Song, Ziyan Song and Zeping Zhu. Finally, I want to show my gratitude to Zhiyu Tian, Qizheng Yin and Beijing International Center for Mathematical Research for supporting me as a visitor about two years.

\section{Filtered decomposition theorem}
\subsection{The decomposition theorem of Deligne-Illusie }
 In this subsection, we briefly recall the decomposition theorem of Deligne-Illusie. For more details, we refer the reader to \cite[\S8-10]{EV}, \cite[\S5]{IL02}. Let $k$ be a perfect field of positive characteristic $p$, $X$ a smooth variety over $k$, $D\subset X$ a normal crossing divisor. Assume that there exists a $W_2(k)$-lifting $(\tilde X,\tilde D)$ of $(X,D)$. Consider the following famous diagram:
$$\xymatrix{X\ar[r]^-{F_{X/k}}\ar[1,1]\ar@/^2pc/[rr]^-{F_X}&X'\ar[r]^-{\pi_{X/k}}\ar[d]&X\ar[d]\\
&\mathrm{Spec}(k)\ar[r]^-{F_k}&\mathrm{Spec}(k),}$$
where the right square is cartesian, $F_k$ and $F_X$  the absolute Frobenius of $\mathrm{Spec}(k)$ and $X$, respectively. Set $F:=F_{X/k}$. Note that we may take $X':=X$ (i.e., $\pi_{X/k}=\mathrm{id}_X$) as abstract scheme \footnote{The structure morphism $X'\to\mathrm{Spec}(k)$ is the composite of $\xymatrix{X\ar[r]&\mathrm{Spec}(k)\ar[r]^-{F_k^{-1}}&\mathrm{Spec}(k)}$.}, hence $(\tilde X,\tilde D)$ induces a $W_2(k)$-lifting $(\tilde X',\tilde D')$ of $(X',D':=D)$.
Choose an open affine covering $\mathcal{U}=\{U_\alpha\}$ of $X$, and let $\mathcal{U}'=\{U'_\alpha\},\tilde{\mathcal{U}}=\{\tilde{U}_\alpha\},\tilde{\mathcal{U}}'=\{\tilde U'_\alpha\}$ be the corresponding coverings of $X',\tilde X,\tilde X'$, respectively.  By smoothness and affineness, there exists a family of log liftings $\{\tilde{F}_{U_\alpha/k}:\tilde U_\alpha\to\tilde{U}'_\alpha\}$, i.e., $F_{U_\alpha/k}=\tilde{F}_{U_\alpha/k}\times_{W_2(k)}k$ and $\tilde{F}_{U_\alpha/k}^*\tilde D'=p\tilde D$ for any $\alpha$.  
\begin{definition}
The family $\{U_\alpha,\tilde F_{U_\alpha/k}\}$ constructed above is said to be a D-I atlas of $(\tilde X,\tilde D)/W_2(k)$.
\end{definition}
\begin{lemma}[{{\cite[ \S2]{DI}}}]\label{key lemma} 
Using a D-I atlas $\{U_\alpha,\tilde F_{U_\alpha}\}$ of $(\tilde X,\tilde D)/W_2(k)$, one can construct two families 
$$\begin{array}{c}\{\zeta_{\alpha}:\Omega_{X'/k}(\log D')|_{U'_{\alpha}}\to F_{*}\Omega_{X/k}(\log D)|_{U'_{\alpha}}\},\\
\{h_{\alpha\beta}:\Omega_{X'/k}(\log D)|_{U'_{\alpha}\cap U'_{\beta}}\to F_{*}\mathcal{O}_X|_{U'_{\alpha}\cap U'_{\beta}}\}\end{array}$$ such that:
\begin{itemize}
\item[(i)\ ] $\zeta_{\beta}-\zeta_{\alpha}=dh_{\alpha\beta}$;
\item[(ii)\ ] (cocycle condition) $h_{\alpha\beta}+h_{\beta\gamma}=h_{\alpha\gamma}$.
\end{itemize}
\end{lemma}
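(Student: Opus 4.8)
The plan is to manufacture both families from the single mechanism ``divide a Frobenius pullback by $p$'', which is legitimate precisely because the relative Frobenius annihilates logarithmic differentials. The starting observation is that $F^{*}\colon\Omega^1_{X'/k}(\log D')\to F_*\Omega^1_{X/k}(\log D)$ is the zero map, since $F^{*}(dt')=d(t^{p})=pt^{p-1}\,dt=0$ and $F^{*}(d\log t')=d\log(t^{p})=p\,d\log t=0$ in characteristic $p$.

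\emph{Construction of $\zeta_\alpha$.} Given the local log lift $\tilde F_{U_\alpha/k}$, I would define $\zeta_\alpha$ by lifting a local section $\omega'$ of $\Omega^1_{X'/k}(\log D')|_{U'_\alpha}$ to $\tilde\omega'$ over $\tilde U'_\alpha$. Its pullback $\tilde F_{U_\alpha/k}^{*}\tilde\omega'$ reduces mod $p$ to $F^{*}\omega'=0$, hence lies in $p\,\Omega^1_{\tilde U_\alpha/W_2(k)}(\log\tilde D)$; flatness of $\tilde X$ over $W_2(k)$ identifies this submodule with $\Omega^1_{X/k}(\log D)|_{U_\alpha}$ via multiplication by $p$, so that $\zeta_\alpha(\omega'):=p^{-1}\tilde F_{U_\alpha/k}^{*}\tilde\omega'\bmod p$ is a well-defined section. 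I would then verify that it is independent of the lift $\tilde\omega'$, is $\mathcal{O}_{X'}$-linear, and lands in closed forms (thereby realizing the inverse Cartier isomorphism); extending by wedge powers yields the map of complexes.

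\emph{Step (i).} On $U'_\alpha\cap U'_\beta$ I would compute the difference on the standard basis of log forms. For a local equation $\tilde t'_i$ of a branch of $\tilde D'$ both lifts satisfy $\tilde F_{U_\alpha/k}^{*}\tilde t'_i\equiv\tilde F_{U_\beta/k}^{*}\tilde t'_i\equiv t_i^{p}\pmod p$, so the ratio $\tilde F_{U_\beta/k}^{*}\tilde t'_i/\tilde F_{U_\alpha/k}^{*}\tilde t'_i$ equals $1+p\,a_{\alpha\beta,i}$ for a unique $a_{\alpha\beta,i}\in\mathcal{O}_X$; for a coordinate $\tilde t'_j$ away from $D'$ the difference $\tilde F_{U_\beta/k}^{*}\tilde t'_j-\tilde F_{U_\alpha/k}^{*}\tilde t'_j$ equals $p\,b_{\alpha\beta,j}$. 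Setting $h_{\alpha\beta}(d\log t'_i)=a_{\alpha\beta,i}$ and $h_{\alpha\beta}(dt'_j)=b_{\alpha\beta,j}$ (the former being $p^{-1}\log$ of the ratio, the $p$-adic logarithm being exact since $p^{2}=0$ in $W_2(k)$), the identity $d\log(1+pa)=p\,da$, valid because $p^{2}=0$, gives $\zeta_\beta-\zeta_\alpha=d\,h_{\alpha\beta}$ on each basis element, hence everywhere. The cocycle condition (ii) would then follow from multiplicativity of the logarithm: $(1+p\,a_{\alpha\beta,i})(1+p\,a_{\beta\gamma,i})=1+p(a_{\alpha\beta,i}+a_{\beta\gamma,i})$ since $p^{2}=0$, and this product is $1+p\,a_{\alpha\gamma,i}$, whence $a_{\alpha\beta,i}+a_{\beta\gamma,i}=a_{\alpha\gamma,i}$; the differences $b_{\alpha\beta,j}$ add for the same reason.

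I expect the main obstacle to be the bookkeeping that makes $\zeta_\alpha$ and $h_{\alpha\beta}$ genuine, coordinate-independent $\mathcal{O}_{X'}$-linear maps rather than mere recipes on basis elements: checking independence of the chosen lift $\tilde\omega'$ and of the local equations cutting out $D'$, and confirming that the logarithmic twist is compatible with the lifting condition $\tilde F_{U_\alpha/k}^{*}\tilde D'=p\tilde D$. Everything else is forced by the flatness of $\tilde X$ over $W_2(k)$ together with the relation $p^{2}=0$, which is what makes the $p$-adic logarithm the uniform device underlying both the difference formula and the cocycle identity.
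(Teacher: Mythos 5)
Your construction is essentially the paper's (and Deligne--Illusie's) own: writing $\tilde F_{U_\alpha}(\tilde x)=\tilde x^p+p\tilde\lambda$ and setting $\zeta_\alpha(dx)=x^{p-1}dx+d\lambda$, $h_{\alpha\beta}(dx)=\mu-\lambda$ is exactly your ``divide $\tilde F^*$ by $p$'' device, with (i) and (ii) falling out of $p^2=0$ just as in your logarithm computation on the branches of $\tilde D'$. The well-definedness and linearity checks you flag are precisely what the paper leaves to the reader, so the proposal matches the intended proof.
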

\begin{proof}
We briefly recall the constructions of $\zeta_{\alpha}$ and $h_{\alpha\beta}$. Let $x\in\mathcal{O}_{ U_\alpha'}$ with a lifting $\tilde x\in\mathcal{O}_{\tilde U_\alpha'}$. It is easy to see that $\tilde F_{U_\alpha}(\tilde x)=\tilde x^p+p\tilde\lambda$, where $\tilde\lambda\in\mathcal{O}_{\tilde U_\alpha'}$ is a lifting of some $\lambda\in\mathcal{O}_{ U_\alpha'}$. We set 
$$\zeta_\alpha(dx):=x^{p-1}dx+d\lambda.$$
Furthermore, we assume that $x\in\mathcal{O}_{U_\alpha'\cap U_\beta'}$. Write $\tilde F_{U_\beta}(\tilde x)=\tilde x^p+p\tilde\mu$, where $\tilde\mu\in\mathcal{O}_{\tilde U_\alpha'\cap\tilde U_\beta'}$ is a lifting of some $\mu\in\mathcal{O}_{ U_\alpha'\cap  U_\beta'}$. We set
$$h_{\alpha\beta}(dx):=\mu-\lambda.$$
One can check that $\zeta_\alpha,h_{\alpha\beta}$ are well-defined and they satisfy the conditions (i), (ii).
\end{proof}
The trick for showing Theorem \ref{D-I, decomposition} can be summarized by the following diagram of quasi-isomorphisms:
\begin{eqnarray}\label{varphi}
\xymatrix{\tau_{<p} F_{*}\Omega^*_{X/k}(\log D)\ar[r]^-{\iota}&\check{\mathcal{C}}(\mathcal{U}', \tau_{<p}F_{*}\Omega^*_{X/k}(\log D))\\
&\bigoplus_{i=0}^{p-1}\Omega^i_{X'/k}(\log D')[-i].\ar[u]^-{\varphi_{X}}}
\end{eqnarray}
This diagram involves many notations, we quickly review them. 
\begin{itemize}
\item[-\ ] For a complex $K^*$ over some abelian category, we set $\tau_{<p}K^*$ to be the subcomplex of $K^*$ with components $K^i$ for $i\leq p-2$, $\mathrm{ker}(d:K^{p-1}\to K^p)$ for $i=p-1$ and $0$ for $i\geq p$.
\item[-\ ] $\Omega^*_{X/k}(\log D)$ is the logarithmic de Rham complex of $X$ over $k$ with pole along $D$ (see for instance \cite[\S7.1]{IL02}).
\item[-\ ]Let $\check{\mathcal{C}}^*(\mathcal{U}',\tau_{<p}F_{*}\Omega^*_{X/k}(\log D))$ be the \v{C}ech resolution of $\tau_{<p}F_{*}\Omega^*_{X/k}(\log D)$ by $\mathcal{U}'$, and let $\check{\mathcal{C}}(\mathcal{U}',\tau_{<p}F_{*}\Omega^*_{X/k}(\log D))$ be the associated single complex \cite[Pages 121-122]{IL02}.
\item[-\ ] For any $i$, $\Omega^i_{X'/k}(\log D')[-i]$ is a complex with components $\Omega^i_{X'/k}(\log D')$ for degree $i$ and $0$ for other degrees. 
\item[-\ ]Let $f:K_1^*\to K^*_2$ be a morphism between complexes over some abelian category. Then we say $f$ is a quasi-isomorphism if $H^i(f):H^i(K_1^*)\to H^i(K_2^*)$ is an isomorphism for any $i$.
\item[-\ ]$\iota$ is the natural augmentation which is a quasi-isomorphism (see \cite[Lemma 4.2, Chapter III]{Hartshorne}).
\item[-\ ] $\varphi_{X}$ is an $\mathcal{O}_{X'}$-linear morphism of complexes constructed below, which is a quasi-isomorphism (see \cite[Theorem 2.1]{DI}).
\end{itemize} 
\begin{construction}\label{construction varphi}
Let $\{\zeta_{\alpha_0}\},\{h_{\alpha_0\alpha_1}\}$ be as in Lemma \ref{key lemma}. The morphism $\varphi_{X}$ in \eqref{varphi} is constructed as follows:
\begin{itemize}
\item[-\ ]$\varphi_{X}^0$ is clear;
\item[-\ ] $\varphi_{X}^i=(\varphi_{X}^1)^{\cup i}\delta_i$ for $0<i<p$, where $\delta_i$ is the standard section of the natural projection
$\Omega^{\otimes i}_{X'/k}(\log D')\to\Omega^i_{X'/k}(\log D')$ and
$$(\varphi_{X}^1)^{\cup i}:\Omega^{\otimes i}_{X'/k}(\log D')\to\check{\mathcal{C}}(\mathcal{U}',\tau_{<p}F_{X/k*}\Omega^*_{X/k}(\log D))^i$$
is given by
$$(\varphi_{X}^1)^{\cup i}(\omega_1\otimes\cdots\otimes\omega_i)=\varphi^1_{X}(\omega_1)\cup\cdots\cup\varphi^1_{X}(\omega_i),~\omega_1,\cdots,\omega_i\in\Omega_{X/k}(\log D).$$

\item[-\ ] $\varphi_{X}^1=\varphi_{X}(0,1)\oplus\varphi_{X}(1,0)$, where
$$\varphi_{X}(0,1):\Omega_{X'/k}(\log D')\to\check{\mathcal{C}}^0(\mathcal{U}',F_{X/k*}\Omega_{X/k}(\log D))$$
is given by $\varphi_{X}(0,1)(\omega)=\{\zeta_{\alpha}(\omega)\}$, and
$$\varphi_{X}(1,0):\Omega_{X'/k}(\log D')\to\check{\mathcal{C}}^1(\mathcal{U}',F_{X/k*}\mathcal{O}_X)$$
is given by $\varphi_X(1,0)(\omega)=\{h_{\alpha_0\alpha_1}(\omega)\}$.
\end{itemize}
\end{construction}
To understand Theorem \ref{D-I, decomposition}, it remains to recall $D(X')$.
\begin{definition}[{{\cite[\S4]{IL02}}}]
Let $\mathbb A$ be an abelian category. We define $C^+(\mathbb A)$ to be the category of bounded below complexes over $\mathbb A$. Set $K^+(\mathbb A)$ to be the homotopy category obtained by $C^+(\mathbb A)$ modulo the homotopy equivalence. Let $D^+(\mathbb A)$ be the derived category obtained by inverting quasi-isomorphisms in $K^+(\mathbb A)$. Set $D^+(X'):=D^+(\mathrm{Mod}_{\mathcal{O}_{X'}})$, where $\mathrm{Mod}_{\mathcal{O}_{X'}}$ is the abelian category of $\mathcal{O}_{X'}$-modules on $X'$.
\end{definition}

\subsection{Filtered derived category} For a thorough treatment of this subject, we refer the reader to \cite[\S3.3.1]{CEGT} or \cite[\S7]{D3}. Let $\mathbb{A}$ be an abelian category.
A filtered object of $\mathbb A$ is an object of $\mathbb A$ together with a finite increasing filtration. A filtered morphism in $\mathbb A$ is a morphism $f:(A_1,W^1)\to(A_2,W^2)$ between two filtered objects of $\mathbb A$, i.e., $f:A_1\to A_2$ is a morphism of $\mathbb A$ subjects to $f(W^1_iA_1)\subset W^2_iA_2$ for any $i$. Denote by $F\mathbb A$ the category of filtered objects of $\mathbb A$ and filtered morphisms in $\mathbb A$. It is an additive category.

Let $C^+(F\mathbb A)$ be the category of bounded below complexes over $F\mathbb A$, and let $K^+(F\mathbb A)$ be the homotopy category obtained by $C^+(F\mathbb A)$ modulo the homotopy equivalence. A morphism $f:(A_1^*,W^1)\to(A_2^*,W^2)$ in $C^+(F\mathbb A)$ is said to be a quasi-isomorphism if $f:A_1^*\to A_2^*$ is a filtered quasi-isomorphism with respect to the filtrations $W^1,W^2$, i.e., the induced morphism $f:W^1_iA_1^*\to W_i^2A_2^*$ is a quasi-isomorphism in $C^+(\mathbb A)$ for any $i$. Let $D^+(F\mathbb A)$ be the derived category obtained by inverting quasi-isomorphisms in $K^+(F\mathbb A)$.

 \subsection{The weight filtration} Let $S$ be a scheme, $X$ a smooth $S$-scheme, $D$ a normal crossing divisor on $X$ relative to $S$. Recall that besides the decreasing Hodge filtration $\Fil$ on the logarithmic de Rham complex $\Omega^*_{X/S}(\log D)$, it carries another increasing weight filtration $W$.
\begin{definition}[{{\cite[\S3.4.1.2]{CEGT}}}]\label{Deligne's weight filtration}
For any $i\geq0$, there is an increasing weight filtration $W$ on $\Omega^i_{X/S}(\log D)$ defined as follows:
$$W_l\Omega_{X/S}^i(\log D):=\left\{\begin{matrix}\Omega^l_{X/S}(\log D)\wedge\Omega^{i-l}_{X/S},&l\leq i;\\
\Omega_{X/S}^i(\log D),&l>i.
\end{matrix}
\right.$$
The weight filtration $W$ on each term $\Omega^i_{X/S}(\log D)$ induces a weight filtration on $\Omega^*_{X/S}(\log D)$ or $\bigoplus_i\Omega^i_{X/S}(\log D)[-i]$, again denoted by $W$.\end{definition}

\subsection{ $\varphi_{X}$ is an isomorphism in $D^+F(X')$} 
Let $(X,D)/k,(\tilde X,\tilde D)/W_2(k),\varphi_X$ be as in \S2.1. 
 Set 
$$C^+F(X'):=C^+(F\mathrm{Mod}_{\mathrm{O}_{X'}}),~K^+F(X'):=K^+(F\mathrm{Mod}_{\mathrm{O}_{X'}})$$
and $D^+F(X'):=D^+(F\mathrm{Mod}_{\mathrm{O}_{X'}})$. Note that the weight filtration $W$ on $\Omega^*_{X/k}(\log D)$ induces a filtration on the complex $\tau_{<p}F_{*}\Omega^*_{X/k}(\log D)$ or $\check{\mathcal{C}}(\mathcal{U}',\tau_{<p}F_{*}\Omega^*_{X/k}(\log D))$, again denoted by $W$. By definition the following
 $$(\tau_{<p}F_{*}\Omega^*_{X/k}(\log D),W),~(\check{\mathcal{C}}(\mathcal{U}',\tau_{<p}F_{*}\Omega^*_{X/k}(\log D)),W),~(\bigoplus^{p-1}_{i=0}\Omega^i_{X'/k}(\log D')[-i],W)$$
 are objects of $C^+F(X')$. It is obvious that $\iota$ is a quasi-isomorphism in $C^+F(X')$ and $\varphi_X$ is a morphism in $C^+F(X')$. Moreover, we have
\begin{proposition}\label{varphi quasi-iso} $\varphi_X$ is an isomorphism in $D^+F(X')$.
\end{proposition}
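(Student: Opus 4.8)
The plan is to prove that $\varphi_X$ is a filtered quasi-isomorphism, i.e.\ that $\varphi_X\colon W_l(\bigoplus_{i=0}^{p-1}\Omega^i_{X'/k}(\log D')[-i])\to W_l\check{\mathcal{C}}(\mathcal{U}',\tau_{<p}F_{*}\Omega^*_{X/k}(\log D))$ is a quasi-isomorphism in $C^+(\mathrm{Mod}_{\mathcal{O}_{X'}})$ for every $l$. Since the weight filtration $W$ is finite, the short exact sequences $0\to W_{l-1}\to W_l\to\Gr_l^W\to 0$ together with the five lemma reduce this, by induction on $l$, to showing that $\Gr_l^W\varphi_X$ is a quasi-isomorphism for every $l$. (The same remark identifies $\iota$ as a filtered quasi-isomorphism, since the \v{C}ech resolution of each subcomplex $W_l$ is a quasi-isomorphism termwise; so once $\varphi_X$ is handled, the diagram \eqref{varphi} becomes a diagram of filtered quasi-isomorphisms.)

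Next I would compute the graded pieces via the Poincar\'e residue. Writing $a_l\colon D^{(l)}\to X$ for the normalization of the union of the $l$-fold intersections of the local branches of $D$, the residue gives the standard identification $\Gr_l^W\Omega^i_{X/k}(\log D)\cong a_{l*}\Omega^{i-l}_{D^{(l)}/k}$ (see \cite[\S3.4.1]{CEGT}). The crucial point making this compatible with the complex structure and with $\tau_{<p}$ is that the de Rham differential is \emph{strict} for $W$: locally, if $D=\{x_1\cdots x_r=0\}$, a form in $W_l$ is a sum of terms $(dx_S/x_S)\wedge\eta_S$ with $|S|\le l$ and $\eta_S$ having no logarithmic poles, and $d$ acts only on $\eta_S$, leaving $|S|$ unchanged; hence $W$ is locally a direct sum of subcomplexes and $d(\Omega^{*}(\log D))\cap W_l=d(W_l)$. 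Strictness gives $\Gr_l^W\ker(d)=\ker(\Gr_l^W d)$, so $\Gr_l^W$ commutes with $\tau_{<p}$ up to the degree shift by $l$ coming from the residue. Using that $F=F_{X/k}$ is finite (hence $F_*$ is exact and commutes with $\Gr^W$) and restricts to $F_{D^{(l)}/k}$, I obtain
$$
\Gr_l^W\bigl(\tau_{<p}F_{*}\Omega^*_{X/k}(\log D)\bigr)\cong F_{*}\bigl(\tau_{<p-l}\Omega^*_{D^{(l)}/k}\bigr)[-l],
$$
and similarly $\Gr_l^W\bigl(\bigoplus_{i=0}^{p-1}\Omega^i_{X'/k}(\log D')[-i]\bigr)\cong\bigl(\bigoplus_{j=0}^{p-1-l}\Omega^j_{D'^{(l)}/k}[-j]\bigr)[-l]$, together with the matching statement for the \v{C}ech complex.

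Finally I would identify $\Gr_l^W\varphi_X$, under these residue isomorphisms, with the Deligne--Illusie morphism $\varphi_{D^{(l)}}[-l]$ attached to the \emph{smooth} variety $D^{(l)}$ (no divisor), and then invoke Theorem \ref{D-I, decomposition}. For this I note that the $l$-fold intersections of the branches of $\tilde D\subset\tilde X$ furnish a $W_2(k)$-lifting $\tilde D^{(l)}$ of $D^{(l)}$, and that the D-I atlas $\{U_\alpha,\tilde F_{U_\alpha/k}\}$ of $(\tilde X,\tilde D)/W_2(k)$ restricts to a D-I atlas of $\tilde D^{(l)}/W_2(k)$, because the log lifting condition $\tilde F_{U_\alpha/k}^*\tilde D'=p\tilde D$ forces $\tilde F_{U_\alpha/k}$ to preserve each branch and hence each stratum. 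Since $D^{(l)}$ is smooth of dimension $<p$ and $W_2(k)$-liftable, Theorem \ref{D-I, decomposition} (applied with trivial divisor) shows $\varphi_{D^{(l)}}$ is a quasi-isomorphism, so $\Gr_l^W\varphi_X$ is a quasi-isomorphism for all $l$, completing the proof. The hard part will be the last identification: one must check from the explicit formulas of Lemma \ref{key lemma} and Construction \ref{construction varphi} that the data $\zeta_\alpha$, $h_{\alpha\beta}$ are carried by the Poincar\'e residue to the corresponding data on $D^{(l)}$, so that $\Gr_l^W$ really takes $\varphi_X$ to $\varphi_{D^{(l)}}[-l]$ on the nose; verifying this compatibility (including the behaviour of the cup-product formula for $\varphi_X^i$ under the residue) is where the genuine work lies.
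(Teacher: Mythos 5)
Your outline reproduces the paper's Steps 2--4 (reduce by the five lemma to the graded pieces, compute $\Gr^W_l$ by Poincar\'e residue, identify $\Gr^W_l\varphi_X$ with the Deligne--Illusie morphism of the strata), but the step you yourself defer as ``where the genuine work lies'' is precisely the gap, and it is exactly the step the paper's proof is engineered to avoid. For the \v{C}ech morphism $\varphi_X$ built from an arbitrary D-I atlas, the component $\varphi^i_X=(\varphi^1_X)^{\cup i}\delta_i$ is a sum of cup products mixing the $\zeta_\alpha$ and $h_{\alpha\beta}$ over multi-indices, and verifying on the nose that the residue maps carry this to $\varphi_{D^{(l)}}[-l]$ is a heavy combinatorial computation which your proposal does not carry out (and which one should only expect to hold up to homotopy in general). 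The paper sidesteps it: since the statement is \'etale-local one may assume a single global log lifting $\tilde F$ of Frobenius exists; Lemma \ref{homotopy formula} (applied with $\tilde f=\mathrm{id}$) shows $\varphi_X=\varphi_{\tilde F}$ in $K^+F(X')$, and $\varphi_{\tilde F}$ factors as $\iota\circ\tau_{<p}\tilde\varphi_{\tilde F}$, where $\tilde\varphi_{\tilde F}(\omega_1\wedge\cdots\wedge\omega_s)=\zeta_{\tilde F}(\omega_1)\wedge\cdots\wedge\zeta_{\tilde F}(\omega_s)$ lands in $F_*\Omega^*_{X/k}(\log D)$ itself, with no \v{C}ech indices. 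For this multiplicative map, compatibility with $\mathrm{Res}_{D_I}$ is a one-line check, because the log lifting condition forces $\tilde F$ to restrict to a lifting $\tilde F_I$ on each stratum $\tilde D_I$. To complete your argument you should either import this homotopy-replacement reduction or actually execute the cup-product/residue computation you postponed.

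There is also a flaw in your treatment of the interaction of $\Gr^W_l$ with $\tau_{<p}$. Your claim that $W$ is ``locally a direct sum of subcomplexes'' graded by the number of logarithmic factors is false: the expression of a section of $W_l$ as $\sum_S (dx_S/x_S)\wedge\eta_S$ is not unique, and the monomial grading in the free basis $\{dt_i/t_i, dt_j\}$ does not compute $W$ --- for instance $\frac{dt_1}{t_1}\wedge dt_2=t_2\,\frac{dt_1}{t_1}\wedge\frac{dt_2}{t_2}$ lies in $W_1$ although its basis monomial has two logarithmic factors. So your strictness derivation has a hole. The statement you want is the paper's Lemma \ref{quasi-iso truncation case} (the map $\mu:\Gr^W_l\tau_{<p}F_*\Omega^*_{X/k}(\log D)\to\tau_{<p}\Gr^W_lF_*\Omega^*_{X/k}(\log D)$ is a quasi-isomorphism), and its surjectivity in degree $p-1$ is not formal: given $x\in W_l\Omega^{p-1}_{X/k}(\log D)$ with $dx\in W_{l-1}$, the residues $x_I|_{D_I}$ are closed forms on $D_I$ that must be corrected to closed forms extending over $X$, and in characteristic $p$ closed forms are not locally exact; the paper uses the Cartier isomorphism to write $x_I|_{D_I}$ as a Cartier representative $\sum\lambda_K^pt_K^pd_K\log t$ (which does extend closedly, since $d(t_K^p)=0$ and $d(\lambda_K^p)=0$) plus an exact term. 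Genuine characteristic-$p$ input enters at exactly this point, and your argument does not supply it. (Two minor points: after taking $\Gr^W_l$ the truncation becomes $\tau_{<p-l}$ on the stratum, so you must truncate the Deligne--Illusie quasi-isomorphism accordingly; and no hypothesis $\dim X<p$ is needed or available in this proposition.)
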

We prove this proposition by four steps. Since the problem is \'etale local, we may assume that $D$ is a simple normal crossing divisor on $X$ and $F$ admits a global log lifting $\tilde F$. Write $D=\sum_{i=1}^mD_i$, where each $D_i$ is a smooth component of $D$. For any $\emptyset\neq I\subset\{1,\cdots,m\}$, set $ D_I:=\cap_{i\in I} D_i$. Set $ D_\emptyset=X$.  

{\bf Step 1}. Recall that $\varphi_X$ is constructed by a D-I atlas $\{U_\alpha,\tilde F_{U_\alpha}\}$ of $(\tilde X,\tilde D)/W_2(k)$. Using the global log lifting $\tilde F$, there is another D-I atlas $\{U_\alpha,\tilde F|_{\tilde U_\alpha}\}$ of $(\tilde X,\tilde D)/W_2(k)$. Running Construction \ref{construction varphi} for this new D-I atlas, we obtain a new morphism which is denoted by $\varphi_{\tilde F}$. 
\begin{lemma}
$\varphi_X=\varphi_{\tilde F}$ in $K^+F(X')$. 
\end{lemma}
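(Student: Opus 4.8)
The plan is to exhibit an explicit \emph{filtered} chain homotopy between $\varphi_X$ and $\varphi_{\tilde F}$, following the Deligne--Illusie argument for the independence of $\varphi$ on the choice of local Frobenius lift, and then to verify that this homotopy respects $W$. Write $A:=\bigoplus_{i=0}^{p-1}\Omega^i_{X'/k}(\log D')[-i]$ (with zero differential) and $B:=\check{\mathcal{C}}(\mathcal{U}',\tau_{<p}F_*\Omega^*_{X/k}(\log D))$, whose total differential I denote $d_B=d\pm\check\delta$. Both $\varphi_X$ and $\varphi_{\tilde F}$ are built from the \emph{same} covering $\{U_\alpha\}$ by Construction \ref{construction varphi}; only the local data differ. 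Let $\{\zeta_\alpha,h_{\alpha\beta}\}$ be attached to $\{U_\alpha,\tilde F_{U_\alpha}\}$ and $\{\zeta'_\alpha,h'_{\alpha\beta}\}$ to $\{U_\alpha,\tilde F|_{\tilde U_\alpha}\}$. Comparing the two Frobenius lifts over $U_\alpha$ as in Lemma \ref{key lemma}, one gets a well-defined $k_\alpha:\Omega_{X'/k}(\log D')|_{U'_\alpha}\to F_*\mathcal{O}_X|_{U'_\alpha}$, $k_\alpha(dx)=\lambda_\alpha-\lambda'_\alpha$, satisfying $\zeta_\alpha-\zeta'_\alpha=dk_\alpha$ and $h_{\alpha\beta}-h'_{\alpha\beta}=k_\beta-k_\alpha$. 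The point I will exploit for the filtration is that $k_\alpha$ takes values in $F_*\mathcal{O}_X$, i.e.\ in weight-zero forms, and in particular sends $d\log$-forms to regular functions.

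Packaging $K:=\{k_\alpha\}\in\check{\mathcal{C}}^0(\mathcal{U}',F_*\mathcal{O}_X)$, the two relations above say precisely that $\varphi_X^1-\varphi_{\tilde F}^1=d_BK$: the de Rham part of $d_BK$ is $\{dk_\alpha\}=\{\zeta_\alpha-\zeta'_\alpha\}$ and the \v{C}ech part is $\check\delta K=\{k_\beta-k_\alpha\}=\{h_{\alpha\beta}-h'_{\alpha\beta}\}$. I would then extend $K$ to a degree $-1$ map $s\colon A\to B$ by the telescoping formula
\[
s^i=\sum_{j=1}^{i}(\varphi_X^1)^{\cup(j-1)}\cup K\cup(\varphi_{\tilde F}^1)^{\cup(i-j)}\circ\delta_i,
\]
which mimics Construction \ref{construction varphi} with one $\varphi^1$-factor replaced by $K$, the remaining factors being taken from $\varphi_X$ to its left and from $\varphi_{\tilde F}$ to its right; a degree count gives $s^i\colon\Omega^i_{X'/k}(\log D')\to B^{i-1}$.

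To confirm this is a homotopy I would apply the Leibniz rule for $d_B$ on cup products. Since $\varphi_X$ and $\varphi_{\tilde F}$ are morphisms of complexes and $d_A=0$, one has $d_B\varphi_X^1=d_B\varphi_{\tilde F}^1=0$ (this is just conditions (i)--(ii) of Lemma \ref{key lemma} repackaged), so in computing $d_Bs^i$ every derivative must land on the $K$-factor, producing $d_BK=\varphi_X^1-\varphi_{\tilde F}^1$. The resulting sum telescopes to $(\varphi_X^1)^{\cup i}\circ\delta_i-(\varphi_{\tilde F}^1)^{\cup i}\circ\delta_i=\varphi_X^i-\varphi_{\tilde F}^i$, which, since $d_A=0$, is exactly the chain-homotopy identity $\varphi_X-\varphi_{\tilde F}=d_Bs+sd_A$.

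The filtered refinement---the genuinely new content---then follows cleanly. The weight filtration on $B$ is induced term-wise from $W$ on $\Omega^*_{X/k}(\log D)$, and cup product is compatible with the convolution filtration, so $W_a\cup W_b\subset W_{a+b}$; moreover each $\varphi^1$ is filtered and $\delta_i$ respects $W$. Because $K$ is valued in $W_0$, replacing one filtered $\varphi^1$-factor by $K$ in $s^i$ does not raise the total weight, whence $s^i(W_l\Omega^i_{X'/k}(\log D'))\subset W_lB^{i-1}$ and $s$ is a filtered homotopy, giving $\varphi_X=\varphi_{\tilde F}$ in $K^+F(X')$. I expect the main obstacle to be purely bookkeeping: pinning down the Koszul signs in the Leibniz rule and the interaction of the symmetrizers $\delta_i$ with the telescoping so that the identity holds on the nose in $C^+F(X')$. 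By contrast the filtered compatibility, though it is the new ingredient here, reduces to the single observation that $k_\alpha$ lands in the weight-zero regular functions.
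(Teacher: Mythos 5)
Your proof is correct and is essentially the paper's argument: the paper deduces the lemma as the special case $\tilde f=\mathrm{id}$, $\chi=\mathrm{id}$ of Lemma \ref{homotopy formula}, whose proof constructs exactly your homotopy, with $\eta_1=(\tilde F_{U_\alpha}^*-\tilde F^*|_{\tilde U_\alpha})/p$ playing the role of your $K$ and the telescoping expansion of $(\varphi^1_{\tilde F}+d\eta_1)^{\cup i}$ giving your $s^i$. The filtered compatibility is likewise handled there by the same observation you make, namely that $\eta_1$ takes values in the weight-zero sheaf $F_*\mathcal{O}_X$.
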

\begin{proof}
This lemma is a special case of Lemma \ref{homotopy formula}: we take $(\tilde Y,\tilde E)=(\tilde X,\tilde D),\tilde f=id_{(\tilde X,\tilde D)}$, D-I atlases $\{U_\alpha,\tilde F_{U_\alpha}\},\{U_\alpha,\tilde F|_{\tilde U_\alpha}\}$ of $(\tilde X,\tilde D)/W_2(k),(\tilde Y,\tilde E)/W_2(k)$, respectively and $\chi=\mathrm{id}$.
\end{proof}
This lemma shows that if $\varphi_{\tilde F}$ is a quasi-isomorphism in $C^+F(X')$, then Proposition \ref{varphi quasi-iso} follows. Let
 $$\zeta_{\tilde F}:\Omega_{X'/k}(\log D')\to F_*\Omega_{X/k}(\log D)$$
 be the morphism induced by $\tilde F$. Then we can construct a morphism

$$\tilde \varphi_{\tilde F}:(\bigoplus_i\Omega^i_{X'/k}(\log D')[-i],W)\to(F_*\Omega^*_{X/k}(\log D),W)$$
 in $C^+F(X')$ as follows: for $\omega_1,\cdots,\omega_s\in\Omega_{X'/k}(\log D')$, we set
 $$\tilde \varphi_{\tilde F}(\omega_1\wedge\cdots\wedge\omega_s):=\zeta_{\tilde F}(\omega_1)\wedge\cdots\wedge\zeta_{\tilde F}(\omega_s);$$
 for $x'\in\mathcal{O}_{X'}$, we set $\tilde\varphi_{\tilde F}(x'):=F^*x'$. Let
 $$\tau_{<p}\tilde\varphi_{\tilde F}:(\bigoplus_{i=0}^{p-1}\Omega^i_{X'/k}(\log D')[-i],W)\to(\tau_{<p}F_*\Omega^*_{X/k}(\log D),W)$$
 be the morphism obtained by truncating $\tilde\varphi_{\tilde F}$ at degrees $<p$, then one can check that $\varphi_{\tilde F}=\iota\circ \tau_{<p}\tilde\varphi_{\tilde F}$ holds in $C^+F(X')$.  
 Since $\iota$ is a quasi-isomorphism in $C^+F(X')$, it is enough to show that
\begin{lemma}\label{tilde varphi iso} $\tau_{<p}\tilde \varphi_{\tilde F}$ is an isomorphism in $D^+F(X')$.\end{lemma}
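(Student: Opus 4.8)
The plan is to deduce Lemma \ref{tilde varphi iso} from the \emph{untruncated} statement together with a formal compatibility of the canonical truncation with the weight filtration, so that the only essential geometric input is the Cartier isomorphism on the closed strata of $D$. First I would treat the untruncated morphism
$$\tilde\varphi_{\tilde F}:(\bigoplus_{i\geq0}\Omega^i_{X'/k}(\log D')[-i],W)\to(F_*\Omega^*_{X/k}(\log D),W)$$
and show it is already an isomorphism in $D^+F(X')$, i.e.\ a filtered quasi-isomorphism. Since $W$ is a finite increasing filtration on both sides and $\tilde\varphi_{\tilde F}$ is filtered, the short exact sequences $0\to W_{l-1}\to W_l\to\Gr^W_l\to0$ and the five lemma reduce this to checking that $\Gr^W_l\tilde\varphi_{\tilde F}$ is a quasi-isomorphism for every $l$.

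Next I would identify the graded pieces via the Poincar\'e residue. Writing $a_J:D_J\hookrightarrow X$ for the inclusion of the $l$-fold intersection $D_J=\cap_{j\in J}D_j$ with $|J|=l$, the residue gives
$$\Gr^W_l\Omega^*_{X/k}(\log D)\cong\bigoplus_{|J|=l}(a_J)_*\Omega^{*-l}_{D_J/k}[-l],$$
and likewise on $X'$. After applying the exact functor $F_*$, the graded piece $\Gr^W_l$ of the right-hand complex becomes $\bigoplus_{|J|=l}(a'_J)_*F_{D_J/k*}\Omega^*_{D_J/k}[-l]$, whereas $\Gr^W_l$ of the left-hand complex becomes $\bigoplus_{|J|=l}(a'_J)_*\big(\bigoplus_j\Omega^j_{D'_J/k}[-j]\big)[-l]$.

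The key point, which I expect to be the main obstacle, is to verify that under these residue identifications $\Gr^W_l\tilde\varphi_{\tilde F}$ becomes $\bigoplus_{|J|=l}(a'_J)_*\tilde\varphi^{D_J}[-l]$, where $\tilde\varphi^{D_J}$ is the Deligne–Illusie morphism for the smooth $k$-scheme $D_J$ carrying no divisor. This bookkeeping uses three facts: that $\tilde F^*\tilde D'=p\tilde D$ forces $\tilde F$ to restrict to a $W_2(k)$-lifting of the relative Frobenius $F_{D_J/k}$; that $\zeta_{\tilde F}$ is compatible with $W$, sending logarithmic generators to logarithmic ones and holomorphic generators to holomorphic ones, hence compatible with residues; and the multiplicativity $\tilde\varphi_{\tilde F}(\omega_1\wedge\cdots\wedge\omega_s)=\zeta_{\tilde F}(\omega_1)\wedge\cdots\wedge\zeta_{\tilde F}(\omega_s)$. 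Granting this, each $\tilde\varphi^{D_J}$ realizes the inverse Cartier operator $\Omega^j_{D'_J/k}\xrightarrow{\sim}\mathcal H^j(F_{D_J/k*}\Omega^*_{D_J/k})$ in \emph{every} degree, by the defining property of $\zeta$ together with the multiplicativity of $C^{-1}$ (no truncation is needed here, the Cartier isomorphism being available in all degrees). Hence $\Gr^W_l\tilde\varphi_{\tilde F}$ is a quasi-isomorphism, and consequently $W_l\tilde\varphi_{\tilde F}$ is a quasi-isomorphism for every $l$.

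Finally I would pass back to the truncation. Because the canonical truncation is computed by a kernel in degree $p-1$, an element of $W_lF_*\Omega^{p-1}_{X/k}(\log D)$ lies in $\ker d$ iff its image in the ambient complex does, so $W_l\tau_{<p}=\tau_{<p}W_l$ as subcomplexes; that is, the induced filtration on $\tau_{<p}$ is compatible with truncation (and on the source $\tau_{<p}\bigoplus_{i\geq0}\Omega^i_{X'/k}(\log D')[-i]=\bigoplus_{i=0}^{p-1}\Omega^i_{X'/k}(\log D')[-i]$ since its differential vanishes). As $\tau_{<p}$ preserves quasi-isomorphisms, applying it to each quasi-isomorphism $W_l\tilde\varphi_{\tilde F}$ yields a quasi-isomorphism $W_l\tau_{<p}\tilde\varphi_{\tilde F}=\tau_{<p}(W_l\tilde\varphi_{\tilde F})$ for every $l$. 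Therefore $\tau_{<p}\tilde\varphi_{\tilde F}$ is a filtered quasi-isomorphism, i.e.\ an isomorphism in $D^+F(X')$, which is the claim.
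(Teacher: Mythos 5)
Your proof is correct, and its core coincides with the paper's Steps 3--4: the five-lemma reduction of the untruncated filtered statement to graded pieces, the residue identification $\Gr^W_l\Omega^*_{X/k}(\log D)\cong\bigoplus_{|I|=l}i_{D_I*}\Omega^*_{D_I/k}[-l]$ (Lemma \ref{decomposition varphi}), the compatibility of $\tilde\varphi_{\tilde F}$ with residues via the restricted liftings $\tilde F_I$, and the Cartier isomorphism on each stratum, which applies in all degrees precisely because a global lifting of Frobenius is available. Where you genuinely diverge is the handling of the truncation. The paper passes to graded pieces of the \emph{truncated} complex and is then forced to compare $\Gr^W_l\tau_{<p}F_*\Omega^*_{X/k}(\log D)$ with $\tau_{<p}\Gr^W_lF_*\Omega^*_{X/k}(\log D)$; these do \emph{not} coincide in degree $p-1$ (the numerators $\{x\in W_l\Omega^{p-1}:dx=0\}$ versus $\{x\in W_l\Omega^{p-1}:dx\in W_{l-1}\Omega^p\}$ differ), and the paper's Lemma \ref{quasi-iso truncation case} shows the comparison map $\mu$ is a quasi-isomorphism by an explicit computation of $\mathcal H^{p-1}$ that itself invokes the Cartier isomorphism. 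You instead prove the untruncated $\tilde\varphi_{\tilde F}$ is a filtered quasi-isomorphism first and truncate last, at the level of the $W_l$-subcomplexes rather than the graded pieces; there the commutation $W_l\tau_{<p}=\tau_{<p}W_l$ is an exact equality of subcomplexes (your kernel argument in degree $p-1$ is right, since $W_l\Omega^p\hookrightarrow\Omega^p$ is injective, so $dx=0$ in $W_l\Omega^p$ iff $dx=0$ in $\Omega^p$). Because the paper defines a filtered quasi-isomorphism $W_l$-levelwise and $\tau_{<p}$ preserves quasi-isomorphisms, your route is complete and makes the paper's Lemma \ref{quasi-iso truncation case} unnecessary for this lemma; what the paper's heavier $\mu$-lemma buys in exchange is the finer fact that grading and truncation commute up to quasi-isomorphism, which one needs if one insists on working directly with $\Gr^W_l$ of the truncated object. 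Two small points worth making explicit in a final write-up: the induced filtration on $\tau_{<p}F_*\Omega^*_{X/k}(\log D)$ is the intersection filtration $W_l\cap\tau_{<p}$ (this is what makes your commutation literal rather than merely up to quasi-isomorphism), and the five-lemma induction uses that $W$ is finite with $W_{l}=0$ for $l<0$.
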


{\bf Step 2}. When take grading $\Gr^W_l$ on $\tau_{<p}F_*\Omega^*_{X/k}(\log D)$, the truncation $\tau_{<p}$ may make trouble. the following lemma dispel this worry.
 \begin{lemma}\label{quasi-iso truncation case}
  The natural morphism
 \begin{eqnarray}\label{com truncation}
\mu:Gr^W_l\tau_{<p}F_{*}\Omega^*_{X/k}(\log D)\to\tau_{<p}\Gr^{W}_lF_{*}\Omega^*_{X/k}(\log D)
  \end{eqnarray}
  is a quasi-isomorphism in $C^+(\mathrm{Mod}_{\mathcal O_{X'}})$.
 \end{lemma}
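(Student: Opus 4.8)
The plan is to compare the two complexes degree by degree, reduce the whole statement to a single equality of subsheaves in degree $p-1$, and finally identify that equality with the strictness of the weight filtration, which one reads off from the Cartier isomorphism. Since being a quasi-isomorphism is \'etale local and $F$ is finite, so that $F_*$ is exact and commutes with $\tau_{<p}$ and with $\Gr^W_l$, I may assume $D=\sum_{i=1}^m D_i$ is a simple normal crossing divisor; write $K:=F_*\Omega^*_{X/k}(\log D)$ with its weight filtration $W$. First I would unwind the two sides. Because $W_l(\tau_{<p}K)=\tau_{<p}(W_lK)$, one checks that in every degree $i\le p-2$ the sheaves $(\Gr^W_l\tau_{<p}K)^i$ and $(\tau_{<p}\Gr^W_l K)^i$ both equal $\Gr^W_l K^i$ with the same differential, while in degrees $i\ge p$ both vanish. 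In degree $p-1$ the left-hand side is the image $Z^{p-1}(W_lK)/Z^{p-1}(W_{l-1}K)$ of the $W_l$-cocycles inside $\Gr^W_l K^{p-1}$, whereas the right-hand side is $\ker(\bar d\colon \Gr^W_l K^{p-1}\to \Gr^W_l K^p)$ (here $Z^{p-1}(-)$ denotes cocycles). Thus $\mu$ is a degreewise injection which is an isomorphism outside degree $p-1$, so $\coker\mu$ is concentrated in degree $p-1$. Since a complex concentrated in a single degree is acyclic only if it is zero, $\mu$ is a quasi-isomorphism if and only if these two subsheaves of $\Gr^W_l K^{p-1}$ coincide.

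Next I would prove that coincidence. The inclusion $\subseteq$ is clear, so the content is surjectivity of the reduction map $Z^{p-1}(W_lK)\to \ker\bar d$. Given a local section $x\in W_lK^{p-1}$ with $\bar d\,\bar x=0$, i.e.\ $dx\in W_{l-1}K^p$, I want to correct $x$ by a section of $W_{l-1}K^{p-1}$ into a genuine cocycle; this is possible exactly when $dx\in d(W_{l-1}K^{p-1})$. Now $dx$ is a cocycle of $W_{l-1}K$ whose class in $\mathcal H^p(K)$ vanishes, because $dx$ is already a coboundary in $K$. Hence the correction $u\in W_{l-1}K^{p-1}$ exists as soon as the transition map $\mathcal H^p(W_{l-1}K)\to\mathcal H^p(K)$ is injective, i.e.\ as soon as $W$ is strict on cohomology sheaves; then $x-u$ is a $W_l$-cocycle representing $\bar x$.

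The hard part is this strictness input, and it is where the geometry enters. I would deduce it from the compatibility of the Cartier isomorphism with the weight filtration: the Poincar\'e residue gives an isomorphism of complexes $\res\colon \Gr^W_l\Omega^*_{X/k}(\log D)\xrightarrow{\ \sim\ }\bigoplus_{|I|=l}\Omega^{*-l}_{D_I/k}$, and applying $F_*$ together with the Cartier isomorphism on each smooth $D_I$ identifies $\mathcal H^i(\Gr^W_l K)$ with $\bigoplus_{|I|=l}\Omega^{i-l}_{D_I'/k}$. Matching this against the Cartier isomorphism $\mathcal H^i(K)\cong \Omega^i_{X'/k}(\log D')$ and its own residue decomposition shows that the sheaf-level weight spectral sequence of $(K,W)$ degenerates at $E_1$ and that $\mathcal H^i(W_{l-1}K)\cong W_{l-1}\Omega^i_{X'/k}(\log D')$ sits inside $\mathcal H^i(K)\cong \Omega^i_{X'/k}(\log D')$ as the corresponding step of the weight filtration. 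In particular the transition maps are injective, which is exactly the strictness needed above, and the lemma follows. The only delicate point to verify carefully is that the Cartier isomorphism carries the weight filtration on $\Omega^*_{X'/k}(\log D')$ to the filtration induced by $W$ on cohomology sheaves; everything else is bookkeeping.
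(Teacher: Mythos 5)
Your proposal is correct, and its skeleton coincides with the paper's: both reduce the lemma to the single assertion that a local section $x\in W_lF_*\Omega^{p-1}_{X/k}(\log D)$ with $dx\in W_{l-1}F_*\Omega^p_{X/k}(\log D)$ agrees, modulo $W_{l-1}$ and coboundaries, with a genuine cocycle of $W_l$; your cokernel-vanishing reformulation is equivalent, via the long exact sequence you invoke, to the paper's statement that $\mathcal H^{p-1}(\mu)$ is surjective (injectivity being clear in both treatments). Where you genuinely diverge is in how that surjectivity is established. The paper computes directly in local coordinates: it writes $x=\sum_{|I|=l}d_I\log t\wedge x_I+x'$, observes via the residue decomposition of Lemma \ref{decomposition varphi} (i) that $dx\in W_{l-1}$ forces $d_{D_I/k}(x_I|_{D_I})=0$, and then applies the Cartier isomorphism on each stratum $D_I$ to replace $x$, up to $dW_l\Omega^{p-2}_{X/k}(\log D)+W_{l-1}\Omega^{p-1}_{X/k}(\log D)$, by the explicitly closed form $\sum\lambda_K^p\,d_I\log t\wedge t_K^pd_K\log t$. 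You instead package the needed input as strictness of $W$ on cohomology sheaves, i.e.\ injectivity of $\mathcal H^p(W_{l-1}K)\to\mathcal H^p(K)$, and produce the correction $u$ formally; this is a real structural gain, since it isolates a reusable statement (the weight-filtered Cartier isomorphism, uniform in all degrees) and makes the truncation bookkeeping trivial. Be aware, however, that your flagged ``delicate point'' is not bookkeeping but the entire content: $E_1$-degeneration of the sheaf-level weight spectral sequence cannot be extracted by merely ``matching'' $E_1$ against the abutment, because the graded terms are supported on the strata $D_I'$ (so no formal rank count is available) and you do not yet know the abutment filtration equals $W$. The honest proof is an induction on the weight using $0\to W_{m-1}K\to W_mK\to\Gr^W_mK\to0$, where the connecting maps vanish because every local class in $\mathcal H^i(\Gr^W_mK)\cong\bigoplus_{|I|=m}i_{D_I'*}\Omega^{i-m}_{D_I'/k}$ lifts to a closed section of $W_m$ of the form $\lambda^p t_K^p\,d_K\log t\wedge d_I\log t$ --- which is precisely the computation the paper writes down. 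So your route relocates, rather than eliminates, the paper's local argument; what it buys is a cleaner, degree-uniform statement proved once.
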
 
 \begin{proof}
 It is enough to check that $\mathcal{H}^{p-1}(\mu)$ is an isomorphism. Since the problem is Zariski local, we may assume that $X$ admits a global coordinate system $t_1,\cdots,t_n$ such that $D=(t_1\cdots t_m=0)$. A direct computation shows that
 \begin{eqnarray*}
 \begin{array}{cl}
 &\mathcal{H}^{p-1}(\frac{W_{l}\tau_{<p}F_{*}\Omega^*_{X/k}(\log D)}{W_{l-1}\tau_{<p}F_{*}\Omega^*_{X/k}(\log D)})\\
 =&\frac{\{x\in F_{*}W_{l}\Omega^{p-1}_{X/k}(\log D):~dx=0\}}{\{x\in F_{*}W_{l-1}\Omega^{p-1}_{X/k}(\log D):~dx=0\}+dW_{l}\Omega^{p-2}_{X/k}(\log D)}
 \end{array}
 \end{eqnarray*}
 and
 \begin{eqnarray*}
 \begin{array}{cl}
 &\mathcal{H}^{p-1}(\Gr^{W}_lF_{*}\Omega^*_{X/k}(\log D))\\
 =&\frac{\{x\in F_{*}W_{l}\Omega^{p-1}_{X/k}(\log D):~dx\in F_{*}W_{l-1}\Omega^p_{X/k}(\log D)\}}{F_{*}W_{l-1}\Omega^{p-1}_{X/k}(\log D)+dF_{*}W_{l}\Omega^{p-2}_{X/k}(\log D)}.
 \end{array}
 \end{eqnarray*}
 The injectivity of $\mathcal{H}^{p-1}(\mu)$ is clear. Let us show the surjectivity of $\mathcal{H}^{p-1}(\mu)$. For any $\emptyset\neq I=\{i_1,\cdots,i_s\}\subset \{1,\cdots,m\}$ with $i_1<\cdots<i_s$, write 
 $$t_I=\prod_{i\in I}t_j,~d_I\log t=d\log t_{i_1}\wedge\cdots \wedge d\log t_{i_s}.$$
 Let $ x\in F_{*}W_{l}\Omega^{p-1}_{X/k}(\log D)$, then it can be locally expressed as the following form:
 $$x=\sum_{|I|=l}d_I\log t\wedge x_I+x',~x_I\in\Omega^{p-1-l}_{X/k},~x'\in W_{l-1}\Omega^{p-1}_{X/k}(\log D).$$
 Assume that $dx\in W_{l-1}\Omega^p_{X/k}(\log D)$. By Lemma \ref{decomposition varphi} (i), we have $d_{D_I/k}(x|_{D_I})=0$ for any $|I|=l$. Using the Cartier isomorphism \cite[Theorem 9.14]{EV}, we locally have 
 $$x_I|_{D_I}=\sum_{K\cap I=\emptyset,|K|=p-1-l}(\lambda_K^pt_K^pd_K\log t)|_{D_I}+d_{D_I/k}(x_I'|_{D_I}),~\lambda_K\in\mathcal{O}_X,~x_I'\in\Omega^{p-2-l}_{X/k}.$$
 Consequently, we locally get
 \begin{eqnarray*}x=\sum
 \lambda_K^pd_I\log t\wedge t_K^pd_K\log t+dx''+x''',\end{eqnarray*}
 where $x''\in W_{l}\Omega^{p-2}_{X/k}(\log D),x'''\in W_{l-1}\Omega^{p-1}_{X/k}(\log D)
$. From which the surjectivity of $\mathcal{H}^{p-1}(\mu)$ follows, this completes the proof.
  \end{proof}
  Combing this lemma with $5$-lemma, it is enough to show that
  \begin{lemma}\label{Cartier iso}$\Gr^W_l\tilde\varphi_{\tilde F}$ is an isomorphism in $D^+(X')$ for any $l$,or equivalently, $\tilde \varphi_{\tilde F}$ is an isomorphism in $D^+F(X')$. \end{lemma}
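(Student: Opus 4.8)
The plan is to prove the graded statement — that $\Gr^W_l\tilde\varphi_{\tilde F}$ is a quasi-isomorphism in $D^+(X')$ for every $l$ — and to deduce the filtered statement from it by a standard dévissage. For the latter direction, recall that the weight filtration $W$ is finite (Definition \ref{Deligne's weight filtration}), so by the very definition of a quasi-isomorphism in $C^+F(X')$, the assertion that $\tilde\varphi_{\tilde F}$ is an isomorphism in $D^+F(X')$ amounts to the assertion that each $W_l\tilde\varphi_{\tilde F}\colon W_l(\bigoplus_i\Omega^i_{X'/k}(\log D')[-i])\to W_lF_*\Omega^*_{X/k}(\log D)$ is a quasi-isomorphism. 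Using the short exact sequences $0\to W_{l-1}\to W_l\to\Gr^W_l\to 0$ of complexes on both sides, the associated long exact cohomology sequences and the five lemma, one proves by induction on $l$ that all the $W_l\tilde\varphi_{\tilde F}$ are quasi-isomorphisms once all the $\Gr^W_l\tilde\varphi_{\tilde F}$ are; the induction starts from the trivial bottom step $W_{-1}=0$. This gives the stated equivalence and reduces everything to the graded pieces.

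Second, I would identify the graded pieces through the Poincaré residue. Since we have reduced to the case where $D=\sum_{i=1}^m D_i$ is an SNCD, the residue furnishes isomorphisms of complexes $\res\colon\Gr^W_l\Omega^*_{X/k}(\log D)\xrightarrow{\sim}\bigoplus_{|I|=l}\Omega^{*-l}_{D_I/k}=\bigoplus_{|I|=l}\Omega^*_{D_I/k}[-l]$, where each $D_I$ is smooth and carries \emph{no} logarithmic structure. Applying $F_*$ and using that $F=F_{X/k}$ restricts to $F_{D_I/k}$ along the closed immersion $D_I\hookrightarrow X$, the de Rham side becomes $\Gr^W_lF_*\Omega^*_{X/k}(\log D)\cong\bigoplus_{|I|=l}F_{D_I/k*}\Omega^*_{D_I/k}[-l]$. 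On the Higgs side the differential vanishes, and the same residue gives $\Gr^W_l\bigoplus_i\Omega^i_{X'/k}(\log D')[-i]\cong\bigoplus_{|I|=l}\big(\bigoplus_j\Omega^j_{D_I'/k}[-j]\big)[-l]$.

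Third — and this is the heart of the matter — I would check that under these residue isomorphisms $\Gr^W_l\tilde\varphi_{\tilde F}$ becomes the direct sum $\bigoplus_{|I|=l}\tilde\varphi_{\tilde F|_{\tilde D_I}}[-l]$ of the non-logarithmic Deligne--Illusie maps attached to the smooth $W_2(k)$-liftable subvarieties $\tilde D_I$. Concretely, I would choose local coordinates $t_1,\dots,t_n$ with $D=(t_1\cdots t_m=0)$ and a log lifting normalized so that $\tilde F(\tilde t_i)=\tilde t_i^{\,p}$ for $i\le m$; then $\zeta_{\tilde F}(d\log t_i)=d\log t_i$, so for $I=\{i_1<\cdots<i_l\}$ and a pole-free $\eta$ one has $\tilde\varphi_{\tilde F}(d\log t_{i_1}\wedge\cdots\wedge d\log t_{i_l}\wedge\eta)=d\log t_{i_1}\wedge\cdots\wedge d\log t_{i_l}\wedge\tilde\varphi_{\tilde F}(\eta)$, whose residue along $D_I$ is $\zeta_{\tilde F}(\eta)|_{D_I}=\zeta_{\tilde F|_{\tilde D_I}}(\eta|_{D_I})$. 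This last equality — compatibility of the $\zeta$-construction with restriction to $D_I$, which rests on the fact that $\tilde F$ restricts to $\tilde D_I\to\tilde D_I'$ because $\tilde F^*\tilde D'=p\tilde D$ — is exactly the computation recorded in Lemma \ref{decomposition varphi}, and I expect it to be the main obstacle. Granting it, each $\tilde\varphi_{\tilde F|_{\tilde D_I}}\colon\bigoplus_j\Omega^j_{D_I'/k}[-j]\to F_{D_I/k*}\Omega^*_{D_I/k}$ is a quasi-isomorphism, since it induces the inverse Cartier isomorphism \cite[Theorem 9.14]{EV} on every cohomology sheaf (this is the non-logarithmic case of Theorem \ref{D-I, decomposition}, the multiplicativity of $C^{-1}$ handling all degrees at once). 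Hence $\Gr^W_l\tilde\varphi_{\tilde F}$ is a quasi-isomorphism in $D^+(X')$ for every $l$, which by the first paragraph completes the proof.
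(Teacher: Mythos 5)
Your proposal follows the paper's own proof essentially step for step: the paper likewise treats the equivalence of the graded and the filtered statements as a finite-filtration five-lemma d\'evissage, identifies the graded pieces via the Poincar\'e residue (its Lemma \ref{decomposition varphi}), checks the commutativity of the residue square relating $\tilde\varphi_{\tilde F}$ to $\tilde\varphi_{\tilde F_I}$ for $\tilde F_I=\tilde F|_{\tilde D_I}$ (its Step 4, using exactly your observation that $\tilde F^*\tilde D'=p\tilde D$ forces $\tilde F(\tilde D_I)\subset\tilde D'_I$), deduces $\Gr^W_l\tilde\varphi_{\tilde F}=\bigoplus_{|I|=l}i_{D'_I*}\tilde\varphi_{\tilde F_I}[-l]$, and concludes by the Cartier isomorphism; your remark that the global-lifting map $\tilde\varphi_{\tilde F}$ is multiplicative and hence a quasi-isomorphism in all degrees (no $\tau_{<p}$ needed, since no antisymmetrizing section $\delta_i$ and hence no division by $i!$ enters) is also the reason the untruncated statement holds.

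One step of your justification is, however, wrong as stated: you cannot in general ``normalize'' so that $\tilde F(\tilde t_i)=\tilde t_i^{\,p}$. For the given lifting one has $\tilde F^*(\tilde t_i')=\tilde t_i^p(1+p\lambda_i)$, and replacing $\tilde t_i$, $\tilde t_i'$ by $\tilde t_i(1+pa_i)$, $\tilde t_i'(1+pb_i')$ neither changes $\tilde t_i^p$ modulo $p^2$ nor removes the unit unless $-\lambda_i$ is locally a $p$-th power in $\mathcal{O}_X$, which fails already for $\tilde F^*(\tilde t')=\tilde t^p(1+p\tilde t)$ on the affine line; nor are you free to swap $\tilde F$ for a more convenient lifting, since the lemma concerns the specific map $\tilde\varphi_{\tilde F}$ attached to the $\tilde F$ fixed in Step 1 (comparing different liftings would require a filtered homotopy argument you do not make). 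Fortunately the step is inessential: one has $\zeta_{\tilde F}(d\log t_i)=d\log t_i+d\lambda_i$ with $d\lambda_i$ of weight zero, so in your key computation the correction terms lie in $W_{l-1}\Omega^*_{X/k}(\log D)$ and are annihilated by $\mathrm{Res}_{D_I}$; hence the identity $\mathrm{Res}_{D_I}\circ\tilde\varphi_{\tilde F}=\tilde\varphi_{\tilde F_I}\circ\mathrm{Res}_{D'_I}$ holds on the nose on $W_l$, which is all the graded statement requires. With this one repair your argument coincides with the paper's.
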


{\bf Step 3}. 
We inteoduce the Poincar\'e residue map.
\begin{definition}
(i) Let $I\subset\{1,\cdots,m\}$, and $i_{D_I}:D_I\hookrightarrow X$ be the natural closed embedding. For any $0\leq l\leq q$ with $|I|=l$, we define a poincar\'e residue map (\cite[\S3.4.1.3]{CEGT})
\begin{eqnarray*}\label{residue map}
\mathrm{Res}_{D_I}:W_l\Omega^q_{X/k}(\log D)\to i_{D_I*}\Omega^{q-l}_{D_I/k}
\end{eqnarray*}
as follows. Let $I=\{i_1,\cdots,i_l\}$ with $i_1<\cdots<i_l$, and let $\omega\in W_l\Omega^q_{X/k}(\log D)$. Locally, we can write 
$$\omega=d\log t_1\wedge\cdots\wedge d\log t_l\wedge\omega'+\eta,$$
where $t_1,\cdots,t_l$ are local defining functions of $D_{i_1},\cdots,D_{i_l}$ and 
$$\omega'\in\Omega^{q-l}_{X/k},~\eta\in\sum_{i\in I}\Omega^q_{X/k}(\log D-D_i).$$ Then we locally define 
$$\mathrm{Res}_{D_J}(\omega)=\omega'|_{D_J}\in i_{D_I*}\Omega^{q-l}_{D_I/k}.$$

(ii) Let $(K^*,d)$ be a complex over abelian category $A$, and let $l$ be an integer. We define $(K^*,d)[l]$ as follows:
$$(K^*,d)[l]:=(K^*[l],d[l])~,K^*[l]^i:=K^{i+l},~d[l]^i:=(-1)^ld^{i+l}.$$
\end{definition}
\begin{lemma}\label{decomposition varphi}
 Let $i_{D_I'}:D_I'\hookrightarrow X'$ be the base change of $i_{D_I}:D_I\hookrightarrow X$ via $F_k$. Then we have the following decompositions:
\begin{itemize}
\item[(i)\ ] $\Gr_l^{W}\Omega^*_{X/k}(\log D)\cong\bigoplus_{|I|=l}i_{D_I*}\Omega^*_{D_I/k}[-l]$;
\item[(ii)\ ] $\Gr^{W}_l\bigoplus_q\Omega^q_{X'/k}(\log D')[-q]\cong\bigoplus_{|I|=l}i_{D'_I*}(\bigoplus_{q}\Omega^q_{D'_I/k}[-q])[-l]$.
\end{itemize}
\end{lemma}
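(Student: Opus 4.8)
The plan is to realize both isomorphisms via the Poincaré residue maps just introduced, reducing everything to a local computation once compatibility with the differentials is checked. Throughout I work in the situation already fixed in Step 1, so $D=\sum_{i=1}^m D_i$ is a simple normal crossing divisor and $D_I=\cap_{i\in I}D_i$ is meaningful. First I would assemble the maps $\mathrm{Res}_{D_I}$ for $|I|=l$ into a single morphism
\[
\mathrm{Res}:=\bigoplus_{|I|=l}\mathrm{Res}_{D_I}:W_l\Omega^q_{X/k}(\log D)\to\bigoplus_{|I|=l}i_{D_I*}\Omega^{q-l}_{D_I/k},
\]
and observe that it annihilates $W_{l-1}\Omega^q_{X/k}(\log D)$: in local coordinates $t_1,\dots,t_n$ with $D=(t_1\cdots t_m=0)$, a section of $W_{l-1}$ involves at most $l-1$ of the forms $d\log t_i$, so its residue along any $D_I$ with $|I|=l$ vanishes. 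Hence $\mathrm{Res}$ descends to a morphism out of $\Gr_l^W$.

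Next I would prove that the induced map on graded sheaves is an isomorphism, using the same local analysis as in Lemma \ref{quasi-iso truncation case}. With $D=(t_1\cdots t_m=0)$, the sheaf $\Gr_l^W\Omega^q_{X/k}(\log D)$ is freely generated by the classes of $d_I\log t\wedge\omega'$ with $|I|=l$ and $\omega'$ a regular $(q-l)$-form, and $\mathrm{Res}_{D_I}$ sends such a generator to $\omega'|_{D_I}$. This is manifestly a termwise isomorphism onto $\bigoplus_{|I|=l}i_{D_I*}\Omega^{q-l}_{D_I/k}$, and its independence of the chosen defining functions is routine.

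The remaining point, which I regard as the crux, is to match the differentials so that the identification holds at the level of complexes and the shift $[-l]$ appears correctly. Since $d(d_I\log t)=0$, differentiating a generator gives $d(d_I\log t\wedge\omega')=(-1)^l\,d_I\log t\wedge d\omega'$, and restriction to $D_I$ commutes with $d$, so $(d\omega')|_{D_I}=d_{D_I/k}(\omega'|_{D_I})$. Thus the de Rham differential on $\Gr_l^W\Omega^*_{X/k}(\log D)$ corresponds under $\mathrm{Res}$ to $(-1)^l$ times the de Rham differential of $D_I$, which is precisely the differential of $\Omega^*_{D_I/k}[-l]$; this yields (i). For (ii) the situation is easier, since $\bigoplus_q\Omega^q_{X'/k}(\log D')[-q]$ carries the zero differential, so only the termwise sheaf isomorphism is needed. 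Performing the identical residue computation over $X'$ gives $\Gr_l^W\Omega^q_{X'/k}(\log D')\cong\bigoplus_{|I|=l}i_{D'_I*}\Omega^{q-l}_{D'_I/k}$, which matches the degree-$q$ part of $\bigoplus_{|I|=l}i_{D'_I*}(\bigoplus_q\Omega^q_{D'_I/k}[-q])[-l]$, completing the proof. The only genuine care required is the sign and shift bookkeeping, everything else being local and formal.
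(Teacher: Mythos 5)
Your proposal is correct and follows essentially the same route as the paper, which also assembles the Poincar\'e residue maps into $\bigoplus_{|I|=l}\mathrm{Res}_{D_I}:\Gr^W_l\Omega^q_{X/k}(\log D)\to\bigoplus_{|I|=l}i_{D_I*}\Omega^{q-l}_{D_I/k}$ and checks it is an isomorphism by a local computation, deducing (i) and treating (ii) as analogous. You merely make explicit the verifications the paper leaves to the reader --- vanishing of the residue on $W_{l-1}$, the local generators $d_I\log t\wedge\omega'$, and the sign $(-1)^l$ matching the paper's convention $d[-l]=(-1)^l d$ for the shifted complex --- all of which are accurate.
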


 \begin{proof}
 It is easy to see that $\mathrm{Res}_{D_I}$ induces a morphism
 $$
 Gr^W_{|I|}\Omega^q_{X/k}(\log D)\to\Omega^{q-|I|}_{D_I},
  $$
  again denoted by $\mathrm{Res}_{D_I}$.
 Moreover, one can check that
 $$\bigoplus_{|I|=l}\mathrm{Res}_{D_I}:Gr^W_l\Omega^q_{X/k}(\log D)\to\bigoplus_{|I|=l}i_{D_I*}\Omega^{q-l}_{D_I/k}$$
 is an isomorphism, from which (i) follows. (ii) is similar to (i). \end{proof}
  
 {\bf Step 4}. We study the mixed structure of $\tilde \varphi_{\tilde F}$.  Set $\tilde F_I$ to be the restriction of $\tilde F$ on $\tilde D_I$, then it induces a morphism
 $$\zeta_{\tilde F_I}:\Omega_{D_I'/k}\to F_*\Omega_{D_I/k}.$$
 We similarly define $\tilde\varphi_{\tilde F_I}$ as $\varphi_{\tilde F}$.
 For any $|I|=l$ and $i\geq l$, the following diagram can be easily checked:
 $$
\xymatrix{W_l\Omega^i_{X'/k}(\log D)\ar[d]^-{\mathrm{Res}_{D'_I}}\ar[r]^-{\tilde\varphi_{\tilde F}}&W_lF_*\Omega^i_{X/k}(\log D)\ar[d]^-{\mathrm{Res}_{D_I}}\\
i_{D'_I*}\Omega^{i-l}_{D'_I/k}\ar[r]^-{\tilde\varphi_{\tilde F_I}}&i_{D'_I}F_*\Omega^{i-l}_{D_I/k}.
} 
 $$
 Combing Lemma \ref{decomposition varphi} with this diagram, we get
 $$\Gr^W_l\tilde\varphi_{\tilde F}=\bigoplus_{|I|=l}i_{D'_I*}\tilde\varphi_{\tilde F_I}[-l].$$
 By the Cartier isomorphism {\itshape loc.cit.}, we obtain that $\tilde\varphi_{\tilde F_I}$ is a quasi-isomorphism in $C^+F(D_I')$ for any $I$. This implies that $\Gr^W_l\tilde \varphi_{\tilde F}$ is an isomorphism in $D^+(X')$, from which Lemma \ref{Cartier iso} follows. The proof of Proposition \ref{varphi quasi-iso} is completed.

As an easy consequence of Proposition \ref{varphi quasi-iso},  Theorem \ref{FSDT} follows.

Using Proposition \ref{varphi quasi-iso} and an argument which is similar to the proof of Raynaud's vanishing theorem \cite[Theorem 5.8]{IL02}, we have the following.
\begin{theorem}\label{filtered vanishing p}
Let $X$ be a smooth projective variety over $k$, and let $D$ be a normal crossing divisor on $X$. Assume that $\dim(X)<p$. Then for any ample line bundle $L$ on $X$, we have
\begin{eqnarray}
\mathbb{H}^j(X,W_l\Omega^i_{X/k}(\log D)\otimes L)=0~\mathrm{for}~i+j>\dim(X).
\end{eqnarray}
\end{theorem}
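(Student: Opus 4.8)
The plan is to transcribe Raynaud's proof of the Kodaira-type vanishing theorem \cite[Theorem 5.8]{IL02}, replacing the Deligne--Illusie decomposition by its filtered refinement, Proposition \ref{varphi quasi-iso}. First I would record that, since $\dim(X)<p$ forces $\Omega^i_{X/k}(\log D)=0$ for $i\geq p$, the canonical truncation $\tau_{<p}$ in Theorem \ref{FSDT} is harmless, so Proposition \ref{varphi quasi-iso} supplies a genuine filtered isomorphism
\[
\left(\bigoplus_i\Omega^i_{X'/k}(\log D')[-i],W\right)\cong\left(F_*\Omega^*_{X/k}(\log D),W\right)\quad\text{in }D^+F(X').
\]
Passing to the $W_l$-piece of this filtered isomorphism yields an isomorphism $\bigoplus_i W_l\Omega^i_{X'/k}(\log D')[-i]\cong F_*(W_l\Omega^*_{X/k}(\log D))$ in $D^+(X')$, where $W_l\Omega^*_{X/k}(\log D)$ is the weight subcomplex. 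Identifying $X'=X$ as in \S2.1, I would tensor this $\mathcal O_{X'}$-linear isomorphism with an arbitrary line bundle $N$ on $X$ and apply the projection formula for the finite flat morphism $F$, together with $F^*N\cong N^{\otimes p}$, to obtain for every $m$
\[
\mathbb{H}^m\!\left(X,W_l\Omega^*_{X/k}(\log D)\otimes N^{\otimes p}\right)\cong\bigoplus_{i}H^{m-i}\!\left(X,W_l\Omega^i_{X/k}(\log D)\otimes N\right).
\]

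Set $h^m_l(N):=\sum_{i}\dim_k H^{m-i}(X,W_l\Omega^i_{X/k}(\log D)\otimes N)$. The displayed isomorphism computes the left-hand hypercohomology dimension as $h^m_l(N)$, while the Hodge-to-de Rham (stupid filtration) spectral sequence of the complex $W_l\Omega^*_{X/k}(\log D)\otimes N^{\otimes p}$, whose $E_1$-term is $H^j(X,W_l\Omega^i_{X/k}(\log D)\otimes N^{\otimes p})$, gives $\dim_k\mathbb{H}^m(X,W_l\Omega^*_{X/k}(\log D)\otimes N^{\otimes p})\leq h^m_l(N^{\otimes p})$. Combining the two yields the key monotonicity
\[
h^m_l(N)\leq h^m_l(N^{\otimes p})\qquad\text{for all }m\text{ and all }N.
\]
I would emphasize that only the trivial dimension bound attached to a spectral sequence is used here, so \emph{no} degeneration statement is needed.

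Finally I would argue by contradiction. Suppose $H^b(X,W_l\Omega^a_{X/k}(\log D)\otimes L)\neq0$ for some $a+b>\dim(X)=:n$; then $h^{a+b}_l(L)>0$. Iterating the monotonicity along $N=L,L^{\otimes p},L^{\otimes p^2},\dots$ gives $h^{a+b}_l(L^{\otimes p^s})\geq h^{a+b}_l(L)>0$ for every $s$. On the other hand $W_l\Omega^i_{X/k}(\log D)$ is coherent and vanishes for $i>n$, and $L^{\otimes p^s}$ is ample; hence by Serre vanishing \cite[Chapter III]{Hartshorne}, for $s\gg0$ every term $H^{(a+b)-i}(X,W_l\Omega^i_{X/k}(\log D)\otimes L^{\otimes p^s})$ with $(a+b)-i>0$ vanishes, while the unique term with $(a+b)-i=0$ forces $i=a+b>n$ and so vanishes as well. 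Thus $h^{a+b}_l(L^{\otimes p^s})=0$ for $s\gg0$, a contradiction. Therefore $\mathbb{H}^j(X,W_l\Omega^i_{X/k}(\log D)\otimes L)=0$ whenever $i+j>n$.

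The essential new ingredient, and the only place that requires genuine care, is the passage from the filtered decomposition to the twisted graded isomorphism in the first paragraph: one must check that forming the $W_l$-subcomplex commutes with $F_*$ and that the projection formula is compatible with the weight filtration, both of which hold because $F$ is finite flat and $W$ is a filtration by $\mathcal O_{X'}$-subcomplexes. Once this is in place the argument is purely Raynaud's, the whole analytic content being absorbed into Proposition \ref{varphi quasi-iso}; the remainder is the Frobenius-iteration-plus-Serre-vanishing bookkeeping. Note that the hypothesis $\dim(X)<p$ enters twice — to discard the truncation $\tau_{<p}$ and to guarantee $W_l\Omega^i_{X/k}(\log D)=0$ for $i>n$ — so it is genuinely used.
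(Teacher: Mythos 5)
Your proposal is correct and is exactly the argument the paper intends: the paper proves this theorem by citing Proposition \ref{varphi quasi-iso} together with ``an argument similar to the proof of Raynaud's vanishing theorem \cite[Theorem 5.8]{IL02}'', and you have simply written out that Raynaud-style Frobenius-amplification plus Serre-vanishing bookkeeping in full, including the correct observations that $\tau_{<p}$ is vacuous when $\dim(X)<p$ and that passing to $W_l$-pieces is legitimate because filtered quasi-isomorphism is defined levelwise. The only caveat, inherited from the paper's own terse statement, is that the $W_2(k)$-liftability of $(X,D)$ (a standing hypothesis of \S 2, needed for Proposition \ref{varphi quasi-iso}) should be made explicit in the theorem.
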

Note that when $D$ is a simple normal crossing divisor on $X$, it is an easy consequence of \cite[Theorem 5.8]{IL02}. 

Combing Theorem \ref{filtered vanishing p} with spreading-out technical (see \cite[\S6]{IL02}), Theorem \ref{vanishing theorem} follows.

\section{Proof of Theorem \ref{main theorem}}
This section aims to the proof of Theorem \ref{main theorem}. 
\subsection{Mixed Fontaine-Lafffaille complexes}
In this subsection, we establish a positive characteristic analogue of the notion of mixed Hodge complexes (see \cite[Definition 3.3.17]{CEGT}, \cite[(8.1.5)]{D3}).
\begin{definition}
We call a quadruple $(K^*_\dR,W,\Fil,\psi)$ a mixed Fontaine-Laffaille complex over $k$ if:
\begin{itemize}
\item[-\ ] $K^*_\dR$ is a bounded below complex of $k$-vector spaces;
\item[-\ ] $W$ is a finite increasing weight filtration on $K^*_\dR$;
\item[-\ ] $\Fil$ is a finite decreasing Hodge filtration on $K^*_\dR$;
\item[-\ ] $H^j(\Gr_i^WK^*_\dR)=0$ for $i+j\gg0$ and it is a finite dimensional $k$-vector space for any $i,j$;
\item[-\ ] let $(K^*_\Hig,W):=F_k^*\Gr_\Fil(K^*_\dR,W)$, then 
\begin{eqnarray}\label{psi FLC}
\psi:(K_\Hig^*,W)\to(K^*_\dR,W)\end{eqnarray}
is an isomorphism in $D^+F(\mathrm{Vec}_k)$.
\end{itemize}
A morphism between two mixed Fontaine-Laffaille complexes
$$f:(K^*_{\dR,1},W^1,\Fil_1,\psi_1)\to(K^*_{\dR,2},W^2,\Fil_2,\psi_2)$$
is a morphism of complexes of $k$-vector spaces 
$$f:K^*_{\dR,1}\to K^*_{\dR,2}$$ 
satisfying the following conditions:
\begin{itemize}
\item[-\ ] $f$ preserves the filtrations $W^1,W^2$ and $\Fil_1,\Fil_2$;
\item[-\ ] the following diagram is commutative in $D^+F(\mathrm{Vec}_k)$:
\begin{eqnarray}\label{FL morphism}
\xymatrix{(K^*_{\Hig,1},W^1)\ar[r]^-{F_k^*\Gr(f)}\ar[d]^-{\psi_1}&(K^*_{\Hig,2},W^2)\ar[d]^-{\psi_2}\\
(K^*_{\dR,1},W^1)\ar[r]^-{f}&(K^*_{\dR,2},W^2).}
\end{eqnarray}
\end{itemize}
Denote by $\mathrm{MFLC}(k)$ the category of  mixed Fontaine-Laffaille complexes over $k$.
\end{definition}
\begin{remark} When $K_\dR^0=V$, $K_\dR^i=0$ for $i\neq0$ and $W_{-1}K^*_\dR=0,W_0K_\dR^*=K_\dR^*$, the mixed Fontaine-Laffaille complex $(K^*_\dR,W,\Fil,\psi)$ becomes a Fontaine-Laffaille module $(V,\Fil,\psi)$ over $k$. In this case, we notice that $\psi:F_k^*\Gr_\Fil V\to V$ and \eqref{FL morphism} live in $\mathrm{Vec}_k$. Denote by $\mathrm{MF}(k)$ the category of Fontaine-Laffaille modules over $k$.
\end{remark}
It is well-known that the category of Hodge structures is an abelian category and the morphisms are strict, i.e., if $f:(H_1,\Fil_1)\to (H_2,\Fil_2)$ is a morphism of Hodge structures, then 
$$f(\Fil^iH_1)=f(H_1)\cap \Fil^iH_2.$$
For the category $\mathrm{MF}(k)$, we have similar properties.
\begin{theorem}[{{\cite[Theorem 2.1]{Fa}}}]\label{MF abelian}
$\mathrm{MF}(k)$ is an abelian category and morphisms in $\mathrm{MF}(k)$ are strict.
\end{theorem}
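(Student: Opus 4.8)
The plan is to reduce everything to a single statement: every morphism in $\mathrm{MF}(k)$ is strict with respect to the Hodge filtrations. The additive structure on $\mathrm{MF}(k)$ (zero object and biproducts) is inherited verbatim from that on filtered $k$-vector spaces together with the evident direct sums of the $\psi$'s, so it requires no work. Once strictness is known, I will construct kernels and cokernels by equipping the underlying subspace and quotient with the induced filtration and a Frobenius-semilinear datum restricted (resp.\ descended) from $\psi_1$ (resp.\ $\psi_2$), and then check that the canonical arrow from coimage to image is an isomorphism in $\mathrm{MF}(k)$; this is exactly what strictness provides.

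For strictness, the crucial observation is that the defining commutative square of a morphism $f\colon (V_1,\Fil_1,\psi_1)\to(V_2,\Fil_2,\psi_2)$ may be rewritten as $f=\psi_2\circ(F_k^*\Gr_\Fil f)\circ\psi_1^{-1}$. Because $k$ is perfect, $F_k$ is an automorphism of $k$, so the pullback $F_k^*(-)=(-)\otimes_{k,F_k}k$ is exact and preserves $k$-dimensions; in particular it preserves ranks. Since $\psi_1,\psi_2$ are $k$-linear isomorphisms, this identity yields
\[
\dim_k\mathrm{im}(f)=\mathrm{rank}_k(F_k^*\Gr_\Fil f)=\mathrm{rank}_k(\Gr_\Fil f)=\dim_k\mathrm{im}(\Gr_\Fil f).
\]
On the other hand, for any morphism of filtered vector spaces one has the general inequality $\dim_k\mathrm{im}(\Gr_\Fil f)\le\dim_k\mathrm{im}(f)$, coming from the natural injections $\mathrm{im}(\Gr^i_\Fil f)\hookrightarrow \Gr^i_\Fil(\mathrm{im}\,f)$, with equality if and only if $f$ is strict. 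Comparing the two displays forces equality, whence $f$ is strict. I expect this to be the heart of the matter: the subtlety is that $\psi_1,\psi_2$ are merely linear isomorphisms and do not respect the filtrations, so the two filtrations interact only through the graded square, and it is precisely the Frobenius twist that rigidifies the situation and forces the sub- and quotient-filtrations to agree.

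With strictness in hand I will supply the remaining structure. For $f$ as above put $K=\ker f$ with $\Fil^i K=K\cap\Fil^i_1$ and $C=\coker f$ with the quotient filtration. Strictness makes the graded sequence $0\to\Gr_\Fil K\to\Gr_{\Fil_1}V_1\to\Gr_{\Fil_2}V_2\to\Gr_\Fil C\to 0$ exact; applying the exact functor $F_k^*$ and using $f=\psi_2\circ(F_k^*\Gr_\Fil f)\circ\psi_1^{-1}$ once more, one finds $\ker(F_k^*\Gr_\Fil f)=F_k^*\Gr_\Fil K$, so $\psi_1$ carries $F_k^*\Gr_\Fil K$ isomorphically onto $K$, while $\psi_2$ descends to an isomorphism $F_k^*\Gr_\Fil C\to C$. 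These produce objects $(K,\Fil_K,\psi_K)$ and $(C,\Fil_C,\psi_C)$ of $\mathrm{MF}(k)$ satisfying the universal properties of kernel and cokernel.

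Finally, the coimage $V_1/\ker f$ (quotient filtration) and the image $f(V_1)\subset V_2$ (subspace filtration) are compared by the canonical $k$-linear isomorphism between them; strictness says exactly that this isomorphism is filtered, and the same restriction/descent argument shows it is compatible with the induced $\psi$'s. Hence the canonical morphism $\mathrm{coim}\,f\to\mathrm{im}\,f$ is an isomorphism in $\mathrm{MF}(k)$, completing the verification that $\mathrm{MF}(k)$ is abelian. The strictness of morphisms asserted in the theorem is thus established along the way, in the second step.
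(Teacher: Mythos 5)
The paper does not actually prove this statement: it is quoted from Faltings \cite[Theorem 2.1]{Fa} (the result goes back to Fontaine--Laffaille), so there is no internal proof to compare against. Your argument is correct, and it is in substance the standard proof from that literature: the identity $f=\psi_2\circ(F_k^*\Gr_\Fil f)\circ\psi_1^{-1}$, valid because the defining square commutes and $\psi_1,\psi_2$ are (unfiltered) linear isomorphisms, forces $\dim_k\mathrm{im}(f)=\dim_k\mathrm{im}(\Gr_\Fil f)$ since $F_k^*$ preserves ranks; combined with the general inequality $\dim_k\mathrm{im}(\Gr_\Fil f)\le\dim_k\mathrm{im}(f)$, whose equality case characterizes strictness, this yields strictness of every morphism, and strictness then makes the naive kernel and cokernel constructions work and identifies coimage with image. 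You have correctly isolated the one genuinely non-formal point, namely that the filtrations on source and target interact only through the Frobenius-twisted graded square, and that this suffices to rigidify them.

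Two details deserve to be made explicit if you write this up fully. First, the ``equality implies strict'' half of your lemma requires the descending induction $f(V_1)\cap\Fil_2^i=f(\Fil_1^i)+\bigl(f(V_1)\cap\Fil_2^{i+1}\bigr)$, starting from the top step where $\Fil_2^{i+1}=0$; this terminates precisely because the paper's Fontaine--Laffaille modules carry \emph{finite} filtrations on \emph{finite-dimensional} spaces, so you should flag where finiteness enters. Second, in verifying the universal properties you need that the factorization $W\to K=\ker f$ of a morphism $g$ with $fg=0$ is again a morphism of $\mathrm{MF}(k)$: filteredness is clear from the induced filtration on $K$, and the compatibility square holds because $\Gr_\Fil g$ lands in $\Gr_\Fil K\subset\Gr_{\Fil_1}V_1$ (again by the induced filtration) and $\psi_K$ is by construction the restriction of $\psi_1$; dually for the cokernel. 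These are routine but are exactly the places where an unstrict morphism would break the argument, so they are worth a line each. With those additions your proposal is a complete and correct proof of the cited theorem.
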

In what follows, let us fix a mixed Fontaine-Laffaille complex $(K^*_\dR,W,\Fil,\psi)$ and study its spectral sequences. We briefly recall the relevant notions.

\begin{definition}
Let $\mathbb A$ be an abelian category. For any $r_0\geq 0$, a spectral sequence starting from $E_{r_0}$-page over $\mathbb A$ is a family $\{E_r^{i,j},d_r^{i,j}\}_{r\geq r_0}$ subjects to the following conditions:
\begin{itemize}
\item[-\ ] for any $i,j$ and $r\geq r_0$, $E^{i,j}_r$ is an object of $\mathbb A$ and $d^{i,j}_r:E_r^{i,j}\to E_r^{i+r,j-r+1}$ is a morphism in $\mathbb A$;
\item[-\ ] $d_r^{i,j}\circ d^{i-r,j+r-1}_r=0$ and $E_{r+1}^{i,j}\cong\frac{\mathrm{ker}(d_r^{i,j})}{\mathrm{im}(d_r^{i-r,j+r-1})}$.
\end{itemize}
 A morphism between spectral sequences starting from $E_{r_0}$-page over $\mathbb A$
$$f:\{E_r^{i,j},d_r^{i,j}\}_{r\geq r_0}\to\{E_r'^{i,j},d_r'^{i,j}\}_{r\geq r_0}$$
consisting of the following data:
\begin{itemize}
\item[-\ ] for any $i,j$ and $r\geq r_0$, there is a morphism $f:E_r^{i,j}\to E_r'^{i,j}$ in $\mathbb A$ which fits into a commutative diagram
$$\xymatrix{E_r^{i,j}\ar[r]^-{d_r^{i,j}}\ar[d]^-{f}&E_r^{i+r,j-r+1}\ar[d]^-{f}\\
E_r'^{i,j}\ar[r]^-{d_r'^{i,j}}&E_r'^{i+r,j-r+1};
}
$$
\item[-\ ] $f:E_{r+1}^{i,j}\to E_{r+1}'^{i,j}$ is induced from the $E_r$-page.
\end{itemize}
Denote by $E_{r\geq r_0}/\mathbb A$ the category of spectral sequences starting from $E_{r_0}$-page over $\mathbb A$, which is an additive category.
\end{definition}
A typical example of spectral sequences is the following.
\begin{example}[{{\cite[page 440]{GH}}}]
 Let $K^*$ be a complex over an abelian category $\mathbb A$  with differential $d$ and $F$ a  decreasing filtration on $K^*$.  We construct the spectral sequence $\{E_r^{i,j}(K^*,W),d_r^{i,j}\}_{r\geq 0}$ of $(K^*,F)$ starting from $E_0$-page as follows:
$$
\begin{array}{c}
E^{i,j}_r(K^*,W)=Z^{i,j}_r/B^{i,j}_r,~Z^{i,j}_r=\{x\in F^iK^{i+j}:dx\in F^{i+r}K^{i+j+1}\},\\
B^{i,j}_r=dF^{i-r+1}K^{i+j-1}\cap F^iK^{i+j}+\{x\in F^{i+1}K^{i+j}:dx\in F^{i+r}K^{i+j+1}\}
\end{array}
$$
and $d_r^{i,j}:E^{i,j}_r\to E^{i+r,j-r+1}_r$ is induced by $d$. Note that we can take $r=\infty$. We say this spectral sequence degenerates at $E_r$ if $d_s^{i,j}=0$ for any $i,j$ and $s\geq r$.

Let $W$ be another finite increasing filtration on $K^*$. Set $\tilde W^i:=W_{-i}$, Then we take the spectral sequence of $(K^*,W)$ to be the spectral sequence of $(K^*,\tilde W)$.
\end{example}
\begin{lemma}\label{W E_1}
 The spectral sequences of $(K^*_\dR,\Fil),\Gr^W_i(K^*_\dR,\Fil)$ degenerate at $E_1$.
\end{lemma}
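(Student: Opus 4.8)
The plan is to reduce both degeneration statements to a single numerical criterion and then feed in the isomorphism $\psi$. Recall the standard fact that for a bounded below complex $C^*$ of $k$-vector spaces equipped with a biregular decreasing filtration $\Fil$, the spectral sequence of $(C^*,\Fil)$ converges and one always has $\dim_k H^m(C^*)=\sum_i\dim_k E_\infty^{i,m-i}\le\sum_i\dim_k E_1^{i,m-i}=\sum_i\dim_k H^m(\Gr_\Fil^iC^*)$, with equality for every $m$ if and only if the spectral sequence degenerates at $E_1$. The two further ingredients I would use are: first, the Frobenius pullback $F_k^*=(-)\otimes_{k,F_k}k$ is exact and preserves $k$-dimension, since $k$ is perfect and $F_k$ is an automorphism, so that $\dim_k H^m(F_k^*C^*)=\dim_k H^m(C^*)$ for every complex $C^*$; second, by the very definition of a mixed Fontaine-Laffaille complex, $\psi\colon(K^*_\Hig,W)\to(K^*_\dR,W)$ is an isomorphism in $D^+F(\mathrm{Vec}_k)$ with $K^*_\Hig=F_k^*\Gr_\Fil K^*_\dR$; in particular, forgetting $W$, $\psi$ is a quasi-isomorphism $K^*_\Hig\to K^*_\dR$.

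For the spectral sequence of $(K^*_\dR,\Fil)$ I would apply the criterion to $C^*=K^*_\dR$. Since the differential preserves $\Fil$, the associated graded $\Gr_\Fil K^*_\dR=\bigoplus_i\Gr_\Fil^iK^*_\dR$ splits as complexes, so $H^m(\Gr_\Fil K^*_\dR)=\bigoplus_iH^m(\Gr_\Fil^iK^*_\dR)$, whence $\sum_i\dim_k H^m(\Gr_\Fil^iK^*_\dR)=\dim_k H^m(F_k^*\Gr_\Fil K^*_\dR)=\dim_k H^m(K^*_\Hig)$ by the dimension invariance of $F_k^*$. By the quasi-isomorphism $\psi$ the latter equals $\dim_k H^m(K^*_\dR)$. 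Finiteness of $\dim_k H^m(K^*_\dR)$ is supplied by the weight spectral sequence $E_1^{i,j}=H^{i+j}(\Gr_{-i}^WK^*_\dR)\Rightarrow H^{i+j}(K^*_\dR)$, whose $E_1$-terms are finite-dimensional and, for each fixed total degree, nonzero for only finitely many $i$ thanks to the defining hypotheses on $(K^*_\dR,W,\Fil,\psi)$. Thus all quantities are finite and the equality $\dim_k H^m(K^*_\dR)=\sum_i\dim_k H^m(\Gr_\Fil^iK^*_\dR)$ holds for every $m$, forcing $E_\infty=E_1$ and hence degeneration at $E_1$.

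For $\Gr^W_i(K^*_\dR,\Fil)$ I would run the very same argument with $K^*_\dR$ replaced by $\Gr^W_iK^*_\dR$. The key observations are that $\psi$, being a filtered quasi-isomorphism with respect to $W$, induces a quasi-isomorphism $\Gr^W_i\psi$ (apply the five lemma to the short exact sequences $0\to W_{i-1}\to W_i\to\Gr^W_i\to0$), and that the three functors $F_k^*$, $\Gr_\Fil$ and $\Gr^W_i$ commute with one another by exactness of $F_k^*$ together with the canonical identification $\Gr^W_i\Gr_\Fil^j\cong\Gr_\Fil^j\Gr^W_i$ for two finite filtrations. Consequently $\Gr^W_iK^*_\Hig=F_k^*\Gr_\Fil(\Gr^W_iK^*_\dR)$ is precisely the Higgs complex attached to $(\Gr^W_iK^*_\dR,\Fil)$, and $\Gr^W_i\psi$ identifies its cohomology dimensions with those of $\Gr^W_iK^*_\dR$; the hypothesis that each $H^j(\Gr^W_iK^*_\dR)$ is finite-dimensional again legitimizes the count. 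Repeating the dimension argument yields $\dim_k H^m(\Gr^W_iK^*_\dR)=\sum_j\dim_k H^m(\Gr_\Fil^j\Gr^W_iK^*_\dR)$ for all $m$, i.e. degeneration at $E_1$ of the spectral sequence of $(\Gr^W_iK^*_\dR,\Fil)$. The only genuinely delicate points are bookkeeping: verifying biregularity and convergence so that the numerical criterion applies, and checking the commutation of the grading functors with $F_k^*$ so that $K^*_\Hig$ and its $W$-graded pieces are correctly recognized as Higgs complexes. Once these are settled the conclusion is a direct dimension count, and I expect no conceptual obstacle beyond this.
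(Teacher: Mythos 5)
Your proposal is correct and follows essentially the same route as the paper: the paper's proof is exactly the dimension count $\dim_k H^m(K^*_\dR)=\dim_k H^m(F_k^*\Gr_\Fil K^*_\dR)=\dim_k H^m(\Gr_\Fil K^*_\dR)$ furnished by $\psi$ and the dimension-invariance of $F_k^*$, followed by the commutation $F_k^*\Gr_\Fil\Gr^W_i K^*_\dR=\Gr^W_i F_k^*\Gr_\Fil K^*_\dR$ to handle the $W$-graded pieces. You have merely made explicit the bookkeeping (the numerical $E_1$-degeneration criterion, finiteness via the weight filtration hypotheses, and that $\Gr^W_i\psi$ is a quasi-isomorphism) that the paper compresses into ``it is obvious that''.
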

\begin{proof}
It is obvious that
$$\mathrm{dim}_k~H^m(K^*_\dR)=\mathrm{dim}_k~H^m(F_k^*\Gr_\Fil K^*_\dR)=\mathrm{dim}_k~H^m(\Gr_\Fil K^*_\dR),$$
from which the $E_1$-degeneration of $(K^*_\dR,\Fil)$ follows. Note that 
$$F_k^*\Gr_\Fil\Gr^W_iK_\dR^*=\Gr^W_iF_k^*\Gr_\Fil K_\dR^*,$$
then the $E_1$-degeneration of $\Gr^W_i(K^*_\dR,\Fil)$ follows similarly.
\end{proof}
We proceed to study the spectral sequence of $(K_\dR^*,W)$. Before doing this, it is necessary to review Deligne's technical result.
\begin{definition}[{{\cite[Definition 3.2.26]{CEGT}, \cite[(7.2.4)]{D3}}}]\label{three filtration}
Let $K^*$ be a bounded below complex over an abelian category $\mathbb A$ with differential $d$,  $W$ a finite increasing filtration and $F$ a finite decreasing filtration on $K^*$. Assume that $H^j(\Gr^W_i K^*)=0$ and $ H^j(\Gr_F^iK)=0$ for $i+j\gg0$. Then there are three decreasing filtrations $F_d,F_{d^*},F_{\mathrm{rec}}$ on $E^{i,j}_r(K^*,W)$ which are defined as follows:
\begin{itemize}
\item[-\ ] $F_d^lE^{i,j}_r(K^*,W):=\mathrm{im}(E_r^{i,j}(F^lK^*,W)\to E^{i,j}_r(K^*,W))$;
\item[-\ ] dually, we define 
$F_{d^*}^lE^{i,j}_r(K^*,W):=\mathrm{ker}(E_r^{i,j}(K^*,W)\to E^{i,j}_r(K^*/F^lK^*,W))$;
\item[-\ ] it is easy to check that $F_d=F_{d^*}$ on $E_0^{i,j}(K^*,W)$. Then we define the recurrent filtration $F_{\mathrm{rec}}$ on $E_r^{i,j}(K^*,W)$ by induction on $r$. Set $F_{\mathrm{rec}}:=F_d=F_{d^*}$ on $E_0^{i,j}(K^*,W)$. Assume $F_{\mathrm{rec}}$ is defined on $E_r^{i,j}(K^*,W)$. Since $E_{r+1}^{i,j}(K^*,W)$ is a subquotient of $E_r^{i,j}(K^*,W)$, then it induces a filtration $F_{\mathrm{rec}}$ on $E_{r+1}^{i,j}(K^*,W)$. 
\end{itemize}
\end{definition}
Now we can state Deligne's technical lemma.
\begin{lemma}[Deligne {{\cite[Theorem 3.2.30]{CEGT}, \cite[Proposition 7.2.5]{D3}}}]\label{Deligne filtration}
Use the notation as in the definition above.
\begin{itemize}
\item[(i)\ ] The differential $d$ is strictly compatible with $F$ if and only if the spectral sequence of $(K^*,F)$ degenerates at $E_1$.
\item[(ii)\ ] Let $r_0\geq0$. Assume that $d_r^{i,j}$ is strictly compatible with $F_{\mathrm{rec}}$ for any $i,j$ and $r<r_0$. Then we have the following exact sequence:
\begin{eqnarray*}
0\to E^{i,j}_{r_0}(F^lK^*,W)\to E^{i,j}_{r_0}(K^*,W)\to E^{i,j}_{r_0}(K^*/F^lK^*,W)\to0.
\end{eqnarray*}
\end{itemize}
\end{lemma}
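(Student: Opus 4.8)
The statement is Deligne's classical \emph{lemme des deux filtrations}, so the plan is to follow the line of \cite[\S7.2]{D3} and argue directly from the cycle–coboundary presentation of the spectral sequence of a filtered complex recalled in the example above; throughout, the biregularity built into the definition of a mixed Fontaine-Laffaille complex (the vanishing $H^j(\Gr^W_iK^*)=0$, $H^j(\Gr^i_FK^*)=0$ for $i+j\gg0$) is what guarantees convergence and the termination of the descending inductions below. For part (i), I would write an element of $E_r^{i,j}(K^*,F)$ as the class of some $x\in Z_r^{i,j}$, note that $d_r[x]=[dx]$, and observe that degeneration at $E_1$ means $d_r=0$ for every $r\ge1$, i.e. $dx\in B_r^{i+r,j-r+1}$ whenever $x\in Z_r^{i,j}$. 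The key is that this is equivalent to the strictness identity $d(F^lK^n)=dK^n\cap F^lK^{n+1}$ for all $l,n$. One implication is immediate: if $d$ is strict and $x\in F^iK^{i+j}$ satisfies $dx\in F^{i+r}K^{i+j+1}$, then $dx\in dK^{i+j}\cap F^{i+r}K^{i+j+1}=d(F^{i+r}K^{i+j})\subseteq d(F^{i+1}K^{i+j})\cap F^{i+r}K^{i+j+1}\subseteq B_r^{i+r,j-r+1}$, so $d_r[x]=0$. For the converse I would argue by descending induction on the filtration index, using the vanishing of the $d_r$ to lift a given $\xi=dx\in dK\cap F^lK$ step by step into higher filtration modulo exact elements until it is realized as $dx'$ with $x'\in F^lK$.

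Part (ii) I would prove by induction on $r_0$, exploiting that the three-term sequence in question is, bidegree by bidegree, the sequence $E_{r_0}^{i,j}(F^lK^*,W)\to E_{r_0}^{i,j}(K^*,W)\to E_{r_0}^{i,j}(K^*/F^lK^*,W)$ attached to the short exact sequence of $W$-filtered complexes $0\to F^lK^*\to K^*\to K^*/F^lK^*\to0$, where $F^lK^*$ carries the induced and $K^*/F^lK^*$ the quotient $W$-filtration. By the very definitions of $F_d$ and $F_{d^*}$, exactness of this sequence at level $r_0$ is equivalent to the conjunction of injectivity on the left, surjectivity on the right, and the coincidence $F_d^l=F_{d^*}^l$ on $E_{r_0}^{i,j}(K^*,W)$.

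The base case $r_0=0$ is the standard fact that $\Gr^W$ sends a short exact sequence of filtered objects (with induced, resp. quotient, filtrations) to a short exact sequence, applied to each of the three complexes via $E_0^{i,j}(K^*,W)=\Gr^W_{-i}K^{i+j}$; here the strictness hypothesis is vacuous. For the inductive step I would regard the three objects $E_{r_0}^{i,j}(-,W)$, equipped with the differentials $d_{r_0}$, as a short exact sequence of complexes—exact by the inductive hypothesis, which is available since strictness of $d_r$ for $r<r_0$ is assumed—and then pass to $d_{r_0}$-cohomology, which computes the $E_{r_0+1}$-terms. This produces a long exact sequence relating $E_{r_0+1}^{i,j}(F^lK^*,W)$, $E_{r_0+1}^{i,j}(K^*,W)$ and $E_{r_0+1}^{i,j}(K^*/F^lK^*,W)$, and the desired short exact sequence at level $r_0+1$ is exactly the statement that the connecting homomorphisms vanish.

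The main obstacle is precisely this last vanishing: it is where the added hypothesis that $d_{r_0}$ is strictly compatible with $F_{\mathrm{rec}}$ must be used, and carrying it out requires simultaneously tracking that the three filtrations $F_d$, $F_{d^*}$ and $F_{\mathrm{rec}}$ remain mutually compatible as one passes from $E_{r_0}$ to $E_{r_0+1}$. This bookkeeping—identifying the recurrent filtration with the two naive filtrations at each stage and feeding that back into the snake-lemma computation—is the genuine content of the lemma, and the routine diagram chases underneath it I would defer to the computation in \cite[\S7.2]{D3}.
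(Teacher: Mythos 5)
The paper offers no proof of this lemma at all --- it is imported verbatim from the literature via \cite[Theorem 3.2.30]{CEGT} and \cite[Proposition 7.2.5]{D3} --- and your sketch reproduces exactly the classical two-filtrations argument of those sources: for (i) the equivalence of strictness ($d(F^lK^n)=dK^{n}\cap F^lK^{n+1}$) with $E_1$-degeneration via the cycle/boundary presentation and a descending induction made legitimate by biregularity, and for (ii) the induction on $r_0$ through the short exact sequence of $E_{r_0}$-complexes, with exactness in the middle correctly identified with $F_d^l=F_{d^*}^l$ and the strictness hypothesis on $d_{r_0}$ invoked to kill the connecting homomorphisms. This is correct and is essentially the cited proof; the one step you defer --- the simultaneous induction showing $F_d=F_{\mathrm{rec}}=F_{d^*}$ persists from page to page, which is precisely what forces the connecting maps to vanish --- is the genuine content of Deligne's lemma, and deferring it to \cite[\S 7.2]{D3} is consistent with how the paper itself treats the statement.
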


Denote by $$\{E_{r,\dR}^{i,j}=\frac{Z_{r,\dR}^{i,j}}{B_{r,\dR}^{i,j}},~d_{r,\dR}^{i,j}\},~\{E_{r,\Hig}^{i,j}=\frac{Z_{r,\Hig}^{i,j}}{B_{r,\Hig}^{i,j}},~d_{r,\Hig}^{i,j}\}$$ the spectral sequences of $(K_\dR^*,W),(K^*_\Hig,W):=F_k^*\Gr_\Fil(K_\dR^*,W)$, respectively. They are related by the following theorem.
\begin{theorem}\label{spectral sequence}Let $(K^*_\dR,W,\Fil,\psi)$ be as above.
\begin{itemize}
\item[(i)\ ] $\psi$ induces an isomorphism of spectral sequences 
$$\psi:\{E_{r,\Hig}^{i,j},d_{r,\Hig}^{i,j}\}_{r\geq1}\to\{E_{r,\dR}^{i,j},d_{r,\dR}^{i,j}\}_{\geq1}.$$
\item[(ii)\ ] $\Fil_{d}=\Fil_{\mathrm{rec}}$ on $E_{r,\dR}^{i,j}$ and 
 $F_k^*\Gr_{\Fil_{\mathrm{rec}}}$ sends $d_{r,\dR}^{i,j}$ to $d_{r,\Hig}^{i,j}$ for $r\geq0$.
 \end{itemize}
 \end{theorem}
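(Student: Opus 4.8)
The plan is to treat the two assertions separately: I would derive (i) formally from the hypothesis that $\psi$ is an isomorphism in $D^+F(\mathrm{Vec}_k)$, and prove (ii) by an induction on $r$ modelled on Deligne's two filtrations lemma, with the base case supplied by Lemma \ref{W E_1} and the closing dimension count supplied by (i). Although the statement records only the two equalities in (ii), I would in fact carry along strict compatibility of the differentials as part of the inductive hypothesis, since it is exactly what lets the induction proceed.

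For (i), the point is that the spectral sequence of a filtered complex, from its $E_1$-term onward, depends only on the image of the complex in the filtered derived category. Indeed $E_1^{i,j}(K^*,W)=H^{i+j}(\Gr^W_{-i}K^*)$, so any filtered quasi-isomorphism induces an isomorphism on $E_1$ and hence, by the comparison theorem for morphisms of spectral sequences, on every $E_r$ with $r\geq 1$. Writing the isomorphism $\psi$ in $D^+F(\mathrm{Vec}_k)$ as a roof of filtered quasi-isomorphisms between $(K^*_\Hig,W)$ and $(K^*_\dR,W)$, each leg induces such an isomorphism, and (i) follows at once.

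For (ii) I would prove by induction on $r\geq 0$ the conjunction: $\Fil_d=\Fil_{d^*}=\Fil_{\mathrm{rec}}$ on $E_{r,\dR}^{i,j}$; the differential $d_{r,\dR}^{i,j}$ is strictly compatible with $\Fil_{\mathrm{rec}}$; and $F_k^*\Gr_{\Fil_{\mathrm{rec}}}$ carries $(E_{r,\dR}^{i,j},d_{r,\dR}^{i,j})$ to $(E_{r,\Hig}^{i,j},d_{r,\Hig}^{i,j})$. The base case $r=0$ is where Lemma \ref{W E_1} enters: on $E_0^{i,j}=\Gr^W_{-i}K_\dR^{i+j}$ the three filtrations coincide by Definition \ref{three filtration}, the differential $d_{0,\dR}$ is the internal differential of $\Gr^W_{-i}K^*_\dR$, and its strict compatibility with $\Fil$ is precisely the $E_1$-degeneration of $(\Gr^W_{-i}K^*_\dR,\Fil)$ read through Lemma \ref{Deligne filtration}(i); since $K^*_\Hig=F_k^*\Gr_\Fil K^*_\dR$ and $F_k^*\Gr_\Fil$ is exact, it sends $\Gr^W_{-i}K^*_\dR$ with $d_{0,\dR}$ to $\Gr^W_{-i}K^*_\Hig$ with $d_{0,\Hig}$. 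For the inductive step, assuming the conjunction for all $s<r$, the strictness of $d_{s,\dR}$ lets me apply Lemma \ref{Deligne filtration}(ii) with $r_0=r$, yielding for every $l$ the exact sequence
$$0\to E_{r}^{i,j}(\Fil^lK_\dR,W)\to E_{r}^{i,j}(K_\dR,W)\to E_{r}^{i,j}(K_\dR/\Fil^lK_\dR,W)\to0.$$
Its exactness identifies $\Fil_d$ with $\Fil_{d^*}$, and since $\Fil_{\mathrm{rec}}$ is sandwiched between them the three coincide on $E_{r,\dR}$; the same exact sequence shows that $\Gr_{\Fil_{\mathrm{rec}}}$ commutes with forming the $E_r$-page, so that $\Gr^l_{\Fil_{\mathrm{rec}}}E_{r}^{i,j}(K_\dR,W)=E_{r}^{i,j}(\Gr^l_\Fil K_\dR,W)$, and applying $F_k^*$ gives $F_k^*\Gr_{\Fil_{\mathrm{rec}}}E_{r,\dR}^{i,j}=E_{r,\Hig}^{i,j}$ with $d_{r,\dR}^{i,j}$ mapping to $d_{r,\Hig}^{i,j}$. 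This establishes both equalities claimed in (ii).

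The remaining point, strict compatibility of $d_{r,\dR}^{i,j}$ with $\Fil_{\mathrm{rec}}$ needed to feed the next step, is the \textbf{main obstacle}, and here I would invoke (i) through a numerical argument. Writing $E_{r+1,\dR}=H(E_{r,\dR},d_{r,\dR})$ and using the general inequality between a filtered cohomology and the cohomology of its graded, one has
$$\dim_k H(\Gr_{\Fil_{\mathrm{rec}}}E_{r,\dR},\Gr d_{r,\dR})\ \geq\ \dim_k\Gr_{\Fil_{\mathrm{rec}}}H(E_{r,\dR},d_{r,\dR})\ =\ \dim_k E_{r+1,\dR},$$
with equality if and only if $d_{r,\dR}$ is strict. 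By the second assertion just proved and exactness of $F_k^*\Gr_{\Fil_{\mathrm{rec}}}$, the left-hand term equals $\dim_k H(E_{r,\Hig},d_{r,\Hig})=\dim_k E_{r+1,\Hig}$, and by part (i) we have $\dim_k E_{r+1,\Hig}=\dim_k E_{r+1,\dR}$. Hence equality holds throughout, forcing strictness of $d_{r,\dR}$ and closing the induction. The delicate part is thus not any single estimate but the bookkeeping that keeps (i), the two filtrations lemma, and the exactness of $F_k^*\Gr_\Fil$ mutually consistent at each stage of the recursion.
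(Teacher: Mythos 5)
Your proposal is correct, and its inductive skeleton coincides with the paper's: the same base case $r=0$ via Lemma \ref{W E_1} and Lemma \ref{Deligne filtration}(i), and the same use of Lemma \ref{Deligne filtration}(ii) at each stage to identify $\Fil_d=\Fil_{\mathrm{rec}}=\Fil_{d^*}$ on the $E_r$-page and to commute $F_k^*\Gr_{\Fil_{\mathrm{rec}}}$ past the page. Where you genuinely diverge is at the crux — the strictness of $d_{r,\dR}^{i,j}$ with respect to $\Fil_{\mathrm{rec}}$ that feeds the next stage. The paper obtains it structurally: part (i) makes $\psi$ an isomorphism $E^{i,j}_{r,\Hig}\to E^{i,j}_{r,\dR}$, so each $(E^{i,j}_{r,\dR},\Fil_{\mathrm{rec}},\psi)$ is a Fontaine-Laffaille module; the commutativity of the diagram \eqref{mu commu} exhibits $d^{i,j}_{r,\dR}$ as a morphism in $\mathrm{MF}(k)$, and strictness then follows from Faltings' Theorem \ref{MF abelian} that morphisms in $\mathrm{MF}(k)$ are strict — the positive-characteristic avatar of ``morphisms of Hodge structures are strict.'' You instead run a dimension count, $\dim_k H(\Gr_{\Fil_{\mathrm{rec}}}E_{r,\dR})=\dim_k E_{r+1,\Hig}=\dim_k E_{r+1,\dR}=\dim_k\Gr_{\Fil_{\mathrm{rec}}}H(E_{r,\dR})$, using part (i) for the middle equality and forcing equality in the general inequality, hence strictness. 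This is a legitimate and more elementary route: in effect you reprove the relevant instance of Faltings' strictness by a semilinear-algebra count, avoiding the category $\mathrm{MF}(k)$ altogether; the trade-off is that the paper's argument also manufactures the $\mathrm{MF}(k)$-structures on the pages, which it reuses later for the functors $\rho_\dR$, $\rho_\Hig$, whereas your count yields strictness alone (the module structures then follow, but are not built into the induction).

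Two small points to tighten. First, the quoted form of Lemma \ref{Deligne filtration}(ii) gives only the exact sequences for $\Fil^lK^*_\dR$ and $K^*_\dR/\Fil^lK^*_\dR$; the identification $\Gr^l_{\Fil_{\mathrm{rec}}}E_r(K^*_\dR,W)\cong E_r(\Gr^l_\Fil K^*_\dR,W)$ compatibly with $d_r$ is the graded refinement of the two filtrations lemma, obtained by applying the exact-sequence statement also to the filtered subcomplexes $\Fil^lK^*_\dR$ rather than ``the same exact sequence'' literally — this is exactly the point the paper sidesteps by constructing the comparison map $\mu$ by hand and verifying the diagrams \eqref{mu diagram} and \eqref{mu commu}. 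Second, before counting you should record that $d_{r,\dR}$ is a filtered map for $\Fil_{\mathrm{rec}}$ (immediate once $\Fil_{\mathrm{rec}}=\Fil_d$, since $\Fil_d$ is functorial in the filtered complex) and that all pages are finite-dimensional with only finitely many nonzero spots, which the axioms of a mixed Fontaine-Laffaille complex guarantee; both are needed for the equivalence ``equality of dimensions iff strictness'' supplied by Lemma \ref{Deligne filtration}(i).
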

 \begin{proof}
 (i) is obvious. For (ii), we restate it in a more subtle form, i.e., for any $i,j$ and $r\geq0$, we have:
 \begin{itemize}
 \item[-\ ] $\Fil_{d}=\Fil_{\mathrm{rec}}=\Fil_{d^*}$ on $E_{r,\dR}^{i,j}$ and $d^{i,j}_{r,\dR}$ is strict with respect to $\Fil_{\mathrm{rec}}$;
 \item[-\ ] there is an isomorphism 
 $$\mu:F_k^*\Gr_{\Fil_{\mathrm{rec}}}E^{i,j}_{r,\dR}\to E^{i,j}_{r,\Hig}$$
 in $\mathrm{Vec}_k$ such that the following diagram is commutative:
 \begin{eqnarray}\label{mu diagram}
 \xymatrix{
 \bigoplus_lF_k^*\Fil^lZ^{i,j}_{r,\dR}\ar[r]\ar[d]&Z^{i,j}_{r,\Hig}\ar[d]\\
 F_k^*\Gr_{\Fil_{\mathrm{rec}}}E^{i,j}_{r,\dR}\ar[r]^-{\mu}&E^{i,j}_{r,\Hig},
  }
 \end{eqnarray}
 where the upper and two vertical morphisms are natural morphisms. Moreover, the two vertical maps are surjective, from which $\mu$ is uniquely determined.  
 \item[-\ ]the following diagram is commutative:
 \begin{eqnarray}\label{mu commu}
 \xymatrix{
 F_k^*\Gr_{\Fil_{\mathrm{rec}}}E^{i,j}_{r,\dR}\ar[d]_-{F_k^*\Gr_{\Fil_{\mathrm{rec}}}d_{r,\dR}^{i,j}}\ar[r]^-{\mu}&E^{i,j}_{r,\Hig}\ar[d]^-{d_{r,\Hig}^{i,j}}\\
 F_k^*\Gr_{\Fil_{\mathrm{rec}}}E^{i+r,j-r+1}_{r,\dR}\ar[r]^-{\mu}&E^{i+r,j-r+1}_{r,\Hig}.
}
 \end{eqnarray}
 \end{itemize}
 We prove this subtle form by induction on $r$. The case $r=0$. According to Lemma \ref{W E_1} and Lemma \ref{Deligne filtration} (i), $d^{i,j}_{0,\dR}$ is strict with respect to $\Fil_{\mathrm{rec}}$ for any $i,j$. We set 
 $$\mu:F_k^*\Gr_{\Fil_{\mathrm{rec}}}\Gr^W_{-i}K^{i+j}_{\dR}\to\Gr^W_{-i}F_k^*\Gr_{\Fil_{\mathrm{rec}}}K^{i+j}_{\dR}=\Gr^W_{-i}K^{i+j}_\Hig$$
to be the natural isomorphism. The other statements are obvious, hence this case follows. 

Assume this subtle form holds for any $r\leq r_0$.  Observing that the isomorphism $\psi:(K_\Hig^*,W)\to(K_\dR^*,W)$ induces an isomorphism $E^{i,j}_{r,\Hig}\to E^{i,j}_{r,\dR}$ in $\mathrm{Vec}_k$ for any $i,j$ and $r\geq1$, again denoted by $\psi$. By Theorem \ref{MF abelian}, we know that
 \begin{eqnarray}\label{morphism in MF} d_{r,\dR}^{i,j}:(E_{r,\dR}^{i,j},\Fil_{\mathrm{rec}},\psi)\to(E_{r,\dR}^{i+r,j-r+1},\Fil_{\mathrm{rec}},\psi)\end{eqnarray}
 is a morphism in $\mathrm{MF}(k)$ for any $i,j$ and $1\leq r\leq r_0$. It follows that
 $$F_k^*\Gr_{\Fil_{\mathrm{rec}}}E^{i,j}_{r+1,\dR}\cong H(F_k^*\Gr_{\Fil_{\mathrm{rec}}}E_{r,\dR}^{i-r,j+r-1}\to F_k^*\Gr_{\Fil_{\mathrm{rec}}}E^{i,j}_{r,\dR}\to F_k^*\Gr_{\Fil_{\mathrm{rec}}}E^{i+r,j-r+1}_{r,\dR})
 $$
 holds for any $i,j$ and $1\leq r\leq r_0$. For any $i,j$,
let
 \begin{eqnarray}\label{mu r_0+1}\mu:F_k^*\Gr_{\Fil_{\mathrm{rec}}}E^{i,j}_{r_0+1,\dR}\to E_{r_0+1,\Hig}^{i,j}\end{eqnarray}
 be the cohomology of
 $$
 \xymatrix{
 F_k^*\Gr_{\Fil_{\mathrm{rec}}}E_{r,\dR}^{i-r,j+r-1}\ar[r]\ar[d]^-{\mu}&F_k^*\Gr_{\Fil_{\mathrm{rec}}}E^{i,j}_{r,\dR}\ar[r]\ar[d]^-{\mu}&F_k^*\Gr_{\Fil_{\mathrm{rec}}}E^{i+r,j-r+1}_{r,\dR}\ar[d]^-{\mu}\\
E_{r,\Hig}^{i-r,j+r-1}\ar[r]&E^{i,j}_{r,\Hig}\ar[r]&E^{i+r,j-r+1}_{r,\Hig}.}
 $$
One can check that \eqref{mu r_0+1} satisfies \eqref{mu diagram} for $r=r_0+1$. By assumption, $d^{i,j}_{r,\dR}$ is strict for any $i,j$ and $r\leq r_0$. Using Lemma \ref{Deligne filtration} (ii), it implies that $\Fil_{d}=\Fil_{\mathrm{rec}}$ on $E_{r_0+1,\dR}^{i,j}$ for any $i,j$. From which the surjectivity of the vertical morphisms in \eqref{mu diagram} for $r=r_0+1$ follows. Combing the following easily checked commutative diagram
$$
 \xymatrix{
 \bigoplus_lF_k^*\Fil^lZ^{i,j}_{r_0+1,\dR}\ar[r]\ar[d]&Z^{i,j}_{r_0+1,\Hig}\ar[d]\\
\bigoplus_lF_k^*\Fil^lZ^{i+r,j-r+1}_{r_0+1,\dR}\ar[r]&Z^{i+r,j-r+1}_{r_0+1,\Hig}
  }
$$
with the surjectivity of $\mu:\bigoplus_lF_k^*\Fil^lZ^{i,j}_{r_0+1,\dR}\to F_k^*\Gr_{\Fil_{\mathrm{rec}}} E^{i,j}_{r_0+1,\dR}$, the commutativity of \eqref{mu commu} for $r=r_0+1$ follows. It remains to show the strictness of $d^{i,j}_{r_0+1,\dR}$ with respect to $\Fil_{\mathrm{rec}}$ for any $i,j$. This is easy, because the commutativity of \eqref{mu commu} for $r=r_0+1$ says that \eqref{morphism in MF} is a morphism in $\mathrm{MF}(k)$ for $r=r_0+1$. The proof is completed.
  \end{proof}
Note that $F_k^*$ induces an automorphism of $E_{r\geq 1}/\mathrm{Vec}_k$. 
As an application of the theorem above, one has
\begin{lemma-definition}\label{definition rho MFLC}
There are functors 
$$\rho^{\mathrm{MFLC}}_\dR,~\rho^{\mathrm{MFLC}}_\Hig:\mathrm{MFLC}(k)\to E_{r\geq 1}/\mathrm{Vec}_k$$
with
$$
\begin{array}{c}
\rho^{\mathrm{MFLC}}_\dR(K^*_\dR,W,\Fil,\psi):=\{E_r^{i,j}(K_\dR^*,W),d^{i,j}_{r,\dR}\}_{r\geq 1},\\
\rho^{\mathrm{MFLC}}_\Hig(K^*_\dR,W,\Fil,\psi):=\{E_r^{i,j}(K_\Hig^*,W),d^{i,j}_{r,\Hig}\}_{r\geq 1}.
\end{array}
$$
The  recursive filtration $\Fil_{\mathrm{rec}}$ on $E_r^{i,j}(K_\dR^*,W)$ induces a Hodge filtration $\Fil$ on $\rho^{\mathrm{MFLC}}_\dR$ with $\rho^{\mathrm{MFLC}}_\Hig=F_k^*\circ\Gr_\Fil\rho^{\mathrm{MFLC}}_\dR$.
Let $\psi:\rho^{\mathrm{MFLC}}_\Hig\to\rho^{\mathrm{MFLC}}_\dR$
be the isomorphism of functors induced by \eqref{psi FLC}. We call $(\rho^{\mathrm{MFLC}}_\dR,\rho^{\mathrm{MFLC}}_\Hig,\Fil,\psi)$ a Fontaine-Laffaille quadruple over $\mathrm{MFLC}(k)$.
\end{lemma-definition} 
\begin{proof}
Using Theorem \ref{spectral sequence}, we know that $d_{r,\dR}^{i,j}$ is strict with respect to $\Fil_\mathrm{rec}$ for any $i,j$ and $r\geq 0$. It follows that
$$\{\Fil^l_{\mathrm{rec}}E_{r,\dR}^{i,j},d_{r,\dR}^{i,j}\}_{r\geq 1},~\{\Gr^l_{\Fil_{\mathrm{rec}}}E_{r,\dR}^{i,j},d_{r,\dR}^{i,j}\}_{r\geq1}\in E_{r\geq 1}/\mathrm{Vec}_k,~\forall~l.$$
In other words, the Hodge filtration $\Fil$ on $\rho^{\mathrm{MFLC}}_\dR$ is well-defined. By Theorem \ref{spectral sequence} (ii), $\rho^{\mathrm{MFLC}}_\Hig=F_k^*\circ\Gr_\Fil\rho^{\mathrm{MFLC}}_\dR$ follows. The other statements are obvious.
\end{proof}
 \subsection{Cohomological mixed Fontaine-Laffaille complexes}
 We introduce a positive characteristic analogue of the notion of cohomological mixed Hodge complexes (see \cite[Definition 3.3.18]{CEGT}, \cite[(8.1.6)]{D3}).
 \begin{definition}
Let $X$ be a scheme over $k$. Denote by $\mathrm{Mod}_{k_X}$ (resp.~$\mathrm{Mod}_{k_{X'}}$) the abelian category of sheaves of $k$-vector spaces on $X$ (resp.~$X'$). We call a quadruple $(\mathcal{K}^*_\dR,W,\Fil,\Psi)$ a cohomological mixed Fontaine-Laffaille complex over $X$ if:
\begin{itemize}
\item[-\ ] $\mathcal{K}^*_\dR\in C^+(\mathrm{Mod}_{k_X})$;
\item[-\ ] $W$ is a finite increasing weight filtration on $\mathcal{K}^*_\dR$;
\item[-\ ] $\Fil$ is a finite decreasing Hodge filtration on $\mathcal{K}^*_\dR$;
\item[-\ ] $R^j\Gamma(X, \Gr^W_i\mathcal{K}^*_\dR)=0$ for $i+j\gg0$ and it has finite dimension over $k$ for any $i,j$;
\item[-\ ] set $(\mathcal{K}^*_\Hig,W):=\pi_{X/k}^*\Gr_\Fil(\mathcal{K}^*_\dR,W)\in C^+((\mathrm{Mod}_{k_{X'}})$, then 
$$\Psi:(\mathcal{K}_\Hig^*,W)\to F_*(\mathcal{K}^*_\dR,W)$$
is an isomorphism in $D^+F(\mathrm{Mod}_{k_{X'}})$.
\end{itemize}
\end{definition} 
\begin{lemma} 
Let $(\mathcal{K}^*_\dR,W,\Fil,\psi)$ be a cohomological mixed Fontaine-Laffaille complex over $X$.  Then 
$(R\Gamma(X,(\mathcal{K}_\dR^*,W,\Fil)),R\Gamma(X',\Psi))$
 is a mixed Fontaine-Laffaille complex over $k$.
\end{lemma}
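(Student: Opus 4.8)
The plan is to carry out the entire computation with one fixed bifiltered flasque resolution, so that the compatibilities among derived global sections, the two gradings, and the Frobenius twists all become formal. Concretely, I would fix the functorial Godement resolution $\mathcal{G}^\bullet(-)$ on $X$ and apply it termwise to $(\mathcal{K}^*_\dR,W,\Fil)$, obtaining a bifiltered complex $(\mathcal{I}^*,W,\Fil)$ of flasque sheaves, filtered quasi-isomorphic to $(\mathcal{K}^*_\dR,W,\Fil)$, with the key feature that $\Gr^W_i\mathcal{I}^*$ and $\Gr_\Fil\mathcal{I}^*$ are flasque (hence $\Gamma$-acyclic) resolutions of $\Gr^W_i\mathcal{K}^*_\dR$ and $\Gr_\Fil\mathcal{K}^*_\dR$. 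I then set $(K^*_\dR,W,\Fil):=\Gamma(X,(\mathcal{I}^*,W,\Fil))$, which by construction represents $R\Gamma(X,(\mathcal{K}^*_\dR,W,\Fil))$ in $D^+F(\mathrm{Vec}_k)$.

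The three structural bullets are then immediate: since $\mathcal{K}^*_\dR$ is bounded below so is $\mathcal{I}^*$, whence $K^*_\dR$ is a bounded below complex of $k$-vector spaces; and since $W,\Fil$ on $\mathcal{K}^*_\dR$ are finite and the resolution is filtered, the induced filtrations on $K^*_\dR$ are finite as well. For the fourth bullet I would use that grading commutes with global sections through the flasque resolution: $\Gr^W_i K^*_\dR=\Gamma(X,\Gr^W_i\mathcal{I}^*)$ computes $R\Gamma(X,\Gr^W_i\mathcal{K}^*_\dR)$, so that $H^j(\Gr^W_i K^*_\dR)=R^j\Gamma(X,\Gr^W_i\mathcal{K}^*_\dR)$. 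By the fourth axiom of a cohomological mixed Fontaine-Laffaille complex this is finite dimensional over $k$ for all $i,j$ and vanishes for $i+j\gg0$, which is exactly what is required.

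The last bullet is the substantive point. I must identify $F_k^*\Gr_\Fil(K^*_\dR,W)$ with $R\Gamma(X',(\mathcal{K}^*_\Hig,W))$ and transport $\Psi$. Flasqueness again gives $\Gr_\Fil(K^*_\dR,W)=\Gamma(X,\Gr_\Fil(\mathcal{I}^*,W))$, representing $R\Gamma(X,\Gr_\Fil(\mathcal{K}^*_\dR,W))$. Since $\pi_{X/k}$ is the identity on underlying spaces and $\pi_{X/k}^*$ is base change of scalars along the field automorphism $F_k$ — an exact functor commuting with $\Gamma$ and preserving flasque sheaves — one gets $F_k^*\Gr_\Fil(K^*_\dR,W)=\Gamma(X',\pi_{X/k}^*\Gr_\Fil(\mathcal{I}^*,W))$, which represents $R\Gamma(X',(\mathcal{K}^*_\Hig,W))$. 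Applying the derived filtered functor $R\Gamma(X',-)$ to the isomorphism $\Psi\colon(\mathcal{K}^*_\Hig,W)\to F_*(\mathcal{K}^*_\dR,W)$ in $D^+F(\mathrm{Mod}_{k_{X'}})$ produces an isomorphism in $D^+F(\mathrm{Vec}_k)$, because a derived functor sends isomorphisms of the filtered derived category to isomorphisms. Finally $R\Gamma(X',F_*(\mathcal{K}^*_\dR,W))=R\Gamma(X,(\mathcal{K}^*_\dR,W))=(K^*_\dR,W)$: the relative Frobenius $F$ is a homeomorphism, so $F_*$ is exact, preserves flasque sheaves, and satisfies $\Gamma(X',F_*-)=\Gamma(X,-)$. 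Setting $\psi:=R\Gamma(X',\Psi)$ then gives the desired isomorphism $(K^*_\Hig,W)\to(K^*_\dR,W)$, completing the verification.

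The main obstacle is bookkeeping rather than any deep difficulty: one must ensure that the single chosen resolution simultaneously computes all four derived objects — $R\Gamma$ itself, its two gradings $\Gr^W$ and $\Gr_\Fil$, and the Frobenius-twisted version on $X'$ — and that the exact functors $\pi_{X/k}^*$ and $F_*$ genuinely commute with it, so that $\psi$ lands in $D^+F(\mathrm{Vec}_k)$ carrying the correct filtration $W$ on both sides. Once this compatibility of the Godement resolution with $\Gr^W$, $\Gr_\Fil$, $\pi_{X/k}^*$ and $F_*$ is recorded, the checking of all five axioms is purely formal.
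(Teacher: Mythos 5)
Your proposal is correct and takes essentially the same route as the paper: the paper's proof consists precisely of the two identities $F_k^*\Gr_\Fil R\Gamma(X,(\mathcal{K}^*_\dR,W))=R\Gamma(X',\pi^*_{X/k}\Gr_\Fil(\mathcal{K}^*_\dR,W))=R\Gamma(X',(\mathcal{K}^*_\Hig,W))$ and $R\Gamma(X,(\mathcal{K}^*_\dR,W))=R\Gamma(X',F_*(\mathcal{K}^*_\dR,W))$, after which one applies $R\Gamma(X',\Psi)$. Your Godement-resolution bookkeeping is just the explicit verification of these identities (exactness of $\pi_{X/k}^*$ and $F_*$, compatibility of the flasque resolution with $\Gr^W$ and $\Gr_\Fil$) that the paper leaves implicit.
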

\begin{proof}
We abbreviate $R\Gamma(X,\Fil)$ as $\Fil$. For the proof of this lemma, it is enough to notice that
$$F_k^*\Gr_\Fil R\Gamma(X,(\mathcal{K}^*_\dR,W))=R\Gamma(X',\pi^*_{X/k}\Gr_\Fil(\mathcal{K}^*_\dR,W))=R\Gamma(X',(\mathcal{K}^*_\Hig,W))$$
and $R\Gamma(X,(\mathcal{K}^*_\dR,W))=R\Gamma(X',F_*(\mathcal{K}^*_\dR,W))$.
\end{proof}
Taking account Theorem \ref{FSDT}, we immediately obtain the following.
\begin{theorem}
Notation as in Theorem \ref{FSDT} and additionally assume that $X$ is smooth proper over $k$ with dimension less than $p$. Then $(\Omega^*_{X/k}(\log D),W,\Fil,\Psi_{(\tilde X,\tilde D)})$ is a cohomological mixed Fontaine-Laffaille complex over $X$.
\end{theorem}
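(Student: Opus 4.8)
The plan is to verify the five defining conditions of a cohomological mixed Fontaine-Laffaille complex in turn, the point being that the only substantial one has already been proved as Theorem \ref{FSDT}. The first three conditions are immediate. The complex $\Omega^*_{X/k}(\log D)$ is bounded (hence bounded below) with $k$-linear differentials, so it is an object of $C^+(\mathrm{Mod}_{k_X})$; the weight filtration $W$ of Definition \ref{Deligne's weight filtration} is finite, since $W_l\Omega^i_{X/k}(\log D)$ is $0$ for $l<0$ and equals $\Omega^i_{X/k}(\log D)$ for $l\geq i$; and the Hodge filtration $\Fil$ is the finite stupid filtration.

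For the cohomological finiteness and vanishing condition I would invoke Lemma \ref{decomposition varphi}(i), which gives $\Gr^W_i\Omega^*_{X/k}(\log D)\cong\bigoplus_{|I|=i}i_{D_I*}\Omega^*_{D_I/k}[-i]$. Passing to hypercohomology yields $R^j\Gamma(X,\Gr^W_i\Omega^*_{X/k}(\log D))\cong\bigoplus_{|I|=i}\mathbb{H}^{j-i}(D_I,\Omega^*_{D_I/k})$. Each stratum $D_I$ is smooth and proper over $k$ of dimension $<p$, so its de Rham cohomology is finite dimensional over $k$, giving finiteness for all $i,j$. This group moreover vanishes unless $0\leq j-i\leq 2\dim D_I$ and unless $i$ does not exceed the number of components of $D$ (otherwise no stratum $D_I$ with $|I|=i$ survives); hence it vanishes once $i+j\gg0$, as required.

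The heart of the matter is the last condition, the filtered isomorphism $\Psi_{(\tilde X,\tilde D)}$. Here I would first identify $\pi_{X/k}^*\Gr_\Fil(\Omega^*_{X/k}(\log D),W)$ with $(\bigoplus_i\Omega^i_{X'/k}(\log D')[-i],W)$, the Frobenius base change of the Hodge-graded complex carrying its weight filtration. Because $\dim X<p$, the logarithmic de Rham complex is concentrated in degrees $\leq\dim X<p$, so the canonical truncation $\tau_{<p}$ acts as the identity on both $F_*\Omega^*_{X/k}(\log D)$ and $\bigoplus_{i=0}^{p-1}\Omega^i_{X'/k}(\log D')[-i]$. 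Consequently the map produced by Theorem \ref{FSDT} is precisely the sought filtered isomorphism $(\mathcal{K}^*_\Hig,W)\to F_*(\mathcal{K}^*_\dR,W)$, living a priori in $D^+F(X')$.

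The one remaining point, and the only place demanding care, is that the definition asks for an isomorphism in $D^+F(\mathrm{Mod}_{k_{X'}})$ rather than in $D^+F(X')=D^+F(\mathrm{Mod}_{\mathcal{O}_{X'}})$. I would resolve this by applying the forgetful functor $\mathrm{Mod}_{\mathcal{O}_{X'}}\to\mathrm{Mod}_{k_{X'}}$: being exact and preserving cohomology sheaves, it carries filtered quasi-isomorphisms to filtered quasi-isomorphisms, hence sends the roof of $\mathcal{O}_{X'}$-linear filtered quasi-isomorphisms $\iota$ and $\varphi_X$ underlying $\Psi_{(\tilde X,\tilde D)}$ to a filtered isomorphism in $D^+F(\mathrm{Mod}_{k_{X'}})$. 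Thus no new work beyond Theorem \ref{FSDT} is needed; the genuine input is the filtered decomposition theorem itself, and the present statement is essentially its sheaf-theoretic repackaging under the extra smooth-proper, dimension-$<p$ hypotheses.
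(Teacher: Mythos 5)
Your proposal is correct and takes essentially the paper's own route: the paper obtains this theorem immediately from Theorem \ref{FSDT}, exactly as you do, the key observations being that $\dim X<p$ makes the truncation $\tau_{<p}$ the identity on both sides of $\Psi_{(\tilde X,\tilde D)}$ and that the remaining axioms are routine (the passage from $D^+F(X')$ to $D^+F(\mathrm{Mod}_{k_{X'}})$ via the exact forgetful functor being exactly the right way to settle the category issue). One small caution: Lemma \ref{decomposition varphi}(i) with smooth global strata $D_I$ is stated only after the \'etale-local reduction to a \emph{simple} normal crossing divisor, so for a general NCD your stratum-by-stratum finiteness argument is not literally available; this is harmless for the finiteness/vanishing condition, since $\Gr^W_i\Omega^*_{X/k}(\log D)$ is in any case a bounded complex of coherent sheaves on the proper scheme $X$, vanishing for $i>\dim X$, so $R^j\Gamma(X,\Gr^W_i)$ is finite dimensional and vanishes for $i+j\gg0$ without any decomposition.
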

\subsection{The functoriality of $\Psi$}
Let $\tilde f:(\tilde X,\tilde D)\to(\tilde Y,\tilde E)$ be a morphism  in the category $\mathrm{Sch}^{\mathrm{sp}}_{W_2(k),\log}$, and let $ f:( X, D)\to( Y, E)$ be its closed fiber in $\mathrm{Sch}^s_{k,\log}$. Choosing D-I atlases  
$\{U_{\alpha},\tilde F_{U_{\alpha}/k}\}_{\alpha\in\mathcal{A}},\{V_{\beta},\tilde F_{V_{\beta}/k}\}_{\beta\in\mathcal{B}}$ 
of $(\tilde X,\tilde D)/W_2(k),(\tilde Y,\tilde E)/W_2(k)$, respectively, and set
$\mathcal{U}:=\{U_{\alpha}\}_{\alpha\in\mathcal{A}},\mathcal{V}:=\{V_{\beta}\}_{\beta\in\mathcal{B}}.$ 
By suitably choose $\mathcal{A},\mathcal{B}$, we may assume that there exists a map $\chi:\mathcal{A}\to\mathcal{B}$ such that $f(U_{\alpha})\subset V_{\chi(\alpha)}$ for any $\alpha\in\mathcal{A}$.
Note that the pullback 
\begin{eqnarray*}f^*:(\Omega^*_{Y//k}(\log E),W)\to f_{*}(\Omega^*_{X/k}(\log D),W)
\end{eqnarray*}
 together with $\chi$ induce a morphism in $C^+F(X')$:
\begin{eqnarray*}f^*:(\check{\mathcal{C}}(\mathcal{V}',F_*\Omega^*_{Y/k}(\log E)),W)\to f'_{*}(\check{\mathcal{C}}(\mathcal{U}',F_*\Omega^*_{X/k}(\log D)),W).
\end{eqnarray*}
The following lemma can be regarded as a sort of functoriality of $\Psi$.
 \begin{lemma}\label{homotopy formula}
 The following diagram is commutative in $D^+F(X')$:
 \begin{eqnarray*}\label{diagram XY}
\xymatrix{
 (\bigoplus_i\Omega^i_{Y'/k}(\log E')[-i],W)\ar[r]^-{f'^*}\ar[d]^-{\varphi_{Y}}\ar@/_6pc/[dd]_-{\Psi_{(\tilde Y,\tilde E)}}&f'_{*}(\bigoplus_i\Omega^i_{X'/k}(\log D')[-i],W)\ar[d]^-{\varphi_{X}}\ar@/^6pc/[dd]^-{\Psi_{(\tilde X,\tilde D)}}\\
(\check{\mathcal{C}}(\mathcal{V}',F_*\Omega^*_{Y/k}(\log E)),W)\ar[r]^-{f^*}&f'_{*}(\check{\mathcal{C}}(\mathcal{U}',F_*\Omega^*_{X/k}(\log D)),W)\\
(F_*\Omega^*_{Y/k}(\log E),W)\ar[u]_-{\iota}\ar[r]^-{f^*}&f'_*(F_*\Omega^*_{X/k}(\log D),W).\ar[u]_-{\iota}}
\end{eqnarray*}
\end{lemma}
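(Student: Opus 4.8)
The diagram decomposes into a lower part (the two squares joined by the augmentation $\iota$) and an upper part (the two squares joined by $\varphi$). The lower part commutes \emph{strictly} in $C^+F(X')$: the augmentation $\iota$ is natural in the complex being resolved, and the index map $\chi\colon\mathcal A\to\mathcal B$ was chosen precisely so that $f(U_\alpha)\subset V_{\chi(\alpha)}$, whence the pullback $f^*$ on logarithmic forms and the induced pullback on the associated \v Cech complexes agree termwise and respect $W$. Since $\iota$ is a filtered quasi-isomorphism and $\Psi_{(\tilde X,\tilde D)}=\iota^{-1}\circ\varphi_X$ in $D^+F(X')$ (likewise over $Y$), combining the strict lower commutativity with $f^*\iota_Y=\iota_X f^*$ shows that the outer square with the curved arrows $\Psi$ commutes in $D^+F(X')$ \emph{if and only if} the upper square commutes there. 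So the whole task reduces to producing a filtered homotopy $\eta$ with $f^*\circ\varphi_Y-\varphi_X\circ f'^*=d\circ\eta$; note that the source $\bigoplus_i\Omega^i_{X'/k}(\log D')[-i]$ carries the zero differential, so only one boundary term appears and the relation simply asserts that the two cocycle-valued chain maps are cohomologous degree by degree.

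Since everything is local on $X'$, I work directly with the chosen D-I atlases. The heart of the matter is the degree-one homotopy. For each $\alpha$ the two liftings $\tilde F_{V_{\chi(\alpha)}}\circ\tilde f$ and $\tilde f'\circ\tilde F_{U_\alpha}$ of $\tilde U_\alpha\to\tilde V'_{\chi(\alpha)}$ both reduce modulo $p$ to the common morphism $f'\circ F_{U_\alpha/k}=F_{V_{\chi(\alpha)}/k}\circ f$; their difference is therefore divisible by $p$ and, exactly as in the proof of Lemma \ref{key lemma}, yields an $\mathcal O$-linear map $h^f_\alpha\colon\Omega_{Y'/k}(\log E')\to F_*\mathcal O_X$ characterised by $d\,h^f_\alpha(\omega)=\zeta^X_\alpha(f'^*\omega)-f^*\zeta^Y_{\chi(\alpha)}(\omega)$. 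A direct computation with these four liftings then gives the \v Cech identity $h^f_{\alpha_1}(\omega)-h^f_{\alpha_0}(\omega)=f^*h^Y_{\chi(\alpha_0)\chi(\alpha_1)}(\omega)-h^X_{\alpha_0\alpha_1}(f'^*\omega)$. Setting $\eta^1=\{h^f_\alpha\}\in\check{\mathcal C}^0(\mathcal U',F_*\mathcal O_X)$, these two identities say precisely that the total differential $(d_{\mathrm{dR}}\pm d_{\check{\mathcal C}})\eta^1$ equals the degree-one component of $f^*\varphi_Y-\varphi_X f'^*$.

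Because $\varphi_X^i=(\varphi_X^1)^{\cup i}\delta_i$ is built multiplicatively (Construction \ref{construction varphi}) and $f^*$ respects cup products, I would propagate $\eta^1$ to all degrees by the graded Leibniz rule: $\eta^i$ inserts a single $h^f$-factor into one of the $i$ slots, fills the slots to its left with the building blocks of $f^*\varphi_Y$ and those to its right with the building blocks of $\varphi_X f'^*$, and sums with the usual Koszul signs. Telescoping these terms against $d\eta^1$ in each slot should reproduce $f^*\varphi_Y^i-\varphi_X^i f'^*$. Finally I must verify that $\eta$ is filtered for $W$: the term $h^f_\alpha$ lands in $F_*\mathcal O_X$, which has weight $0$, each higher term is obtained only by wedging with the $\zeta$'s, so the number of $d\log$-factors is unchanged, and since $f^{-1}E\subset D$ the pullback $f^*$ carries $W_l$ into $W_l$ (Definition \ref{Deligne's weight filtration}); hence $\eta(W_l)\subset W_l$ in every degree.

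The main obstacle is the higher-degree bookkeeping of the previous paragraph: checking that the multiplicatively extended $\eta$ is an honest homotopy in all degrees at once, that is, that the Koszul signs and the combinatorial insertion of the single $h^f$-factor cancel correctly against both $d_{\mathrm{dR}}$ and $d_{\check{\mathcal C}}$, all while the weight filtration is tracked through each slot. This is the filtered, morphism-level refinement of the homotopy computation of Deligne--Illusie, and it is where essentially all of the calculation lies; by contrast the degree-one step and the verification of filteredness are comparatively formal.
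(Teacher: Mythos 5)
Your proposal follows the paper's own route: reduce to the upper square in $K^+F(X')$, build the degree-one homotopy $\eta_1=\{(\tilde f^*\tilde F^*_{V_{\chi(\alpha)}/k}-\tilde F^*_{U_\alpha/k}\tilde f'^*)/p\}$ from the $p$-divisible difference of the two composite liftings, and extend it multiplicatively with Koszul signs, tracking $W$ throughout. The higher-degree bookkeeping you flag as the main obstacle is in fact immediate in the paper: since $\varphi^1_Xf'^*$ and $f^*\varphi^1_Y$ are cocycle-valued (the source has zero differential), the cup-product expansion $(\varphi^1_Xf'^*+d\eta_1)^{\cup i}=(\varphi^1_Xf'^*)^{\cup i}+d\sum_{j=0}^{i-1}(-1)^j(\varphi^1_Xf'^*)^{\cup j}\cup\eta_1\cup(f^*\varphi^1_Y)^{\cup(i-j-1)}$ telescopes in one line, yielding exactly the insertion formula you describe.
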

 \begin{proof} 
 It is enough to show that the upper square is commutative in $K^+F(X')$. For this purpose, we need to construct a  morphism
\begin{eqnarray}\label{homotopy}
 \quad\eta_{i}:(\Omega^{\otimes i}_{Y'/k}(\log E'),W)\to f'_{*}(\check{\mathcal{C}}(\mathcal{U}',F_*\Omega^*_{X/k}(\log D))^{i-1},W)\end{eqnarray}
for any $0\leq i<p$ such that 
\begin{eqnarray*}
f^*\varphi^i_{Y}-\varphi_{X}^if'^*=d\eta_{i}\delta_{i}.
\end{eqnarray*}
 There we set $d$ to be the differential of $\check{\mathcal{C}}(\mathcal{U}',F_*\Omega^*_{X/k}(\log D))$, 
$$
\begin{array}{rcl}
W_j\Omega^{\otimes i}_{Y'/k}(\log E')&:=&W_{j}\Omega^{\otimes i}_{Y'/k}(\log E')\\
&:=&\left\{\begin{matrix}\sum_{\sigma\in S_i}\sigma\cdot\Omega^{\otimes (j)}_{Y'/k}(\log E')\otimes\Omega_{Y'/k}^{\otimes(i-j)},&j\leq i;\\
\Omega^{\otimes i}_{Y'/k}(\log E'),&j>i,
\end{matrix}
\right.
\end{array}$$
\footnote{The permutation group $S_i$ acts on $\Omega^{\otimes i}_{Y'/k}(\log E')$ as follows: $\sigma\cdot\omega_1\otimes\cdots\otimes\omega_i:=\mathrm{sgn}(\sigma)\cdot\omega_{\sigma(1)}\otimes\cdots\otimes\omega_{\sigma(i)}$ for $\omega_1,\cdots,\omega_i\in\Omega_{Y'/k}(\log E')$.} and $\delta_{i}$ the standard section of the projection $\Omega^{\otimes i}_{X'/k}(\log D')\to\Omega^i_{X'/k}(\log D')$.

We proceed to the construction of $\eta_i$. The case $i=0$ is obvious with $\eta_0=0$. Consider the case $i=1$. Choosing $\alpha\in\mathcal{A}$ and set $\beta=\chi(\alpha)$. Note that though the following diagram
\begin{eqnarray*}
\xymatrix{\tilde U_{\alpha}\ar[r]^-{\tilde F_{U_{\alpha}/k}}\ar[d]^-{\tilde f}&\tilde U'_{\alpha}\ar[d]^-{\tilde f'}\\
\tilde V_{\beta}\ar[r]^-{\tilde F_{V_{\beta}/k}}&\tilde V'_{\beta}}
\end{eqnarray*}
is not necessarily commutative at the level of schemes, but it is commutative at the level of topological spaces. Hence we can consider the difference
\begin{eqnarray*}
\tilde f^*\tilde F^*_{V_{\beta}/k}-\tilde F^*_{U_{\alpha}/k}\tilde f'^*:\mathcal{O}_{\tilde V'_{\beta}}\to p(\tilde{f}'\tilde{F}_{U_{\alpha}/k})_*\mathcal{O}_{\tilde U_{\alpha}}.
\end{eqnarray*}
Similar to the construction of $h_{\alpha_0\alpha_1}$ in Lemma \ref{key lemma}, the difference induces a morphism
\begin{eqnarray*}
\eta^{\alpha}_{1}:=(\tilde f^*\tilde F^*_{V_{\beta}/k}-\tilde F^*_{U_{\alpha}/k}\tilde f'^*)/p:\Omega_{Y'/k}(\log D')|_{V'_{\beta}}\to f_{*}'(F_{U_{\alpha}/k*}\mathcal{O}_{ U_{\alpha}}).
\end{eqnarray*}
 We define 
$$\eta_{1}:\Omega_{Y'/k}(\log E')\to f'_{*}\check{\mathcal{C}}^0(\mathcal{U}',F_{X/k*}\mathcal{O}_{X}),~\omega'\mapsto\{\eta^{\alpha}_{1}(\omega')\in\mathcal{O}_{U_{\alpha}}\}_{\alpha\in\mathcal{A}}.$$
One can check that $\eta_1$ gives rise to the claimed morphism in \eqref{homotopy} when $i=1$.
 Proceeding to the case $1<i<p$. Since 
$$
\begin{array}{rcl}
f^*(\varphi^1_{Y})^{\cup i}
&=&(f^*\varphi^1_{Y})^{\cup i}
=(\varphi_{X}^1f'^*+d\eta_{1})^{\cup i}\\
&=&(\varphi_{X}^1f'^*)^{\cup i}+d\sum^{i-1}_{j=0}(-1)^{j}(\varphi_{X}^1f'^*)^{\cup j}\cup\eta_{1}\cup(f^*\varphi^1_{Y})^{\cup(i-j-1)}, \end{array}
$$
where we set $(\varphi_{X}^1f'^*)^{\cup 0}=(f^*\varphi^1_{Y})^{\cup0}:=1$. Hence
\begin{eqnarray}\label{explicit homotopy}
\eta_{i}:=\sum^{i-1}_{j=0}(-1)^{j}(\varphi_{X}^1f'^*)^{\cup j}\cup\eta_{1}\cup(f^*\varphi^1_{Y})^{\cup(i-j-1)}\end{eqnarray}
is a claimed morphism in \eqref{homotopy}. This completes the proof.
\end{proof}
This lemma allows us to make the following definition. 
\begin{definition}
 There is a derived functor
$$R\Gamma:(\mathrm{Sch}^{\mathrm{sp}}_{W_2(k),\log})^{\mathrm{op}}\to\mathrm{MFLC}(k)$$
defined as follows: 
\begin{itemize}
\item[-\ ] let $(\tilde X,\tilde D)\in\mathrm{Sch}^{\mathrm{sp}}_{W_2(k),\log}$ with closed fiber $(X,D)$,  and set 
$$R\Gamma(\tilde X,\tilde D):=(R\Gamma(X,(\Omega^*_{X/k}(\log D),W,\Fil)),R\Gamma(X',\Psi_{(\tilde X,\tilde D)}));$$
\item[-\ ] for any morphism $\tilde f:(\tilde X,\tilde D)\to(\tilde Y,\tilde E)$ in $\mathrm{Sch}^{\mathrm{sp}}_{W_2(k),\log}$ with closed fiber $f:(X,D)\to(Y,E)$,  we set 
$$R\Gamma(\tilde f):=(f^*:R\Gamma(Y,\Omega^*_{Y/k}(\log E))\to R\Gamma(X,\Omega^*_{X/k}(\log D))).$$
\end{itemize}
\end{definition}
The following easily checked lemma finishes the proof of Theorem \ref{main theorem}.
\begin{lemma}
The functors $\rho^{\mathrm{MFLC}}_\dR\circ R\Gamma,\rho^{\mathrm{MFLC}}_\Hig\circ R\Gamma$ coincide with the functors $\rho_\dR,\rho_\Hig$ in Theorem \ref{main theorem}. Together with the Hodge filtration $\Fil:=\{\Fil^l\rho^{\mathrm{MFLC}}_\dR\circ R\Gamma\}_l$ on $\rho^{\mathrm{MFLC}}_\dR\circ R\Gamma$ and an isomorphism of functors
$$\psi\circ R\Gamma:\rho^{\mathrm{MFLC}}_\Hig\circ R\Gamma\to \rho^{\mathrm{MFLC}}_\dR\circ R\Gamma,$$
Theorem \ref{main theorem} follows.
\end{lemma}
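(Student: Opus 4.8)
The plan is to prove this lemma by unwinding the definitions of all the functors involved and invoking the structural results already established; no genuinely new argument is needed, which is why it is billed as ``easily checked''. First I would identify the objects. For $(\tilde X,\tilde D)\in\mathrm{Sch}^{\mathrm{sp}}_{W_2(k),\log}$ with closed fiber $(X,D)$, the definition of $R\Gamma$ gives $R\Gamma(\tilde X,\tilde D)=(R\Gamma(X,(\Omega^*_{X/k}(\log D),W,\Fil)),R\Gamma(X',\Psi_{(\tilde X,\tilde D)}))$, which by the cohomological-mixed-Fontaine-Laffaille-complex theorem and the subsequent lemma is exactly the mixed Fontaine-Laffaille complex $(K^*_\dR,W,\Fil,\psi)$ attached to $(\tilde X,\tilde D)$ in the setup of Theorem \ref{main theorem}. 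Applying Lemma-Definition \ref{definition rho MFLC} then yields $\rho^{\mathrm{MFLC}}_\dR(K^*_\dR,W,\Fil,\psi)=\{E_r^{i,j}(K^*_\dR,W),d^{i,j}_{r,\dR}\}_{r\geq1}$, and likewise for $\Hig$, which are precisely the spectral sequences \eqref{E functor} defining $\rho_\dR(\tilde X,\tilde D)$ and $\rho_\Hig(\tilde X,\tilde D)$. Thus on objects the two constructions coincide.

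Next I would treat the functoriality. For a morphism $\tilde f\colon(\tilde X,\tilde D)\to(\tilde Y,\tilde E)$ the definition gives $R\Gamma(\tilde f)=f^*$, and applying $\rho^{\mathrm{MFLC}}_\dR$ produces the induced pullback on spectral sequences, which is exactly the map assigned by $\rho_\dR$; compatibility with composition is inherited from that of $R\Gamma$ and of $\rho^{\mathrm{MFLC}}_\dR$. Finally, the Hodge filtration $\Fil$ and the isomorphism of functors $\psi$ furnished by Lemma-Definition \ref{definition rho MFLC}, precomposed with $R\Gamma$, give exactly the Hodge filtration $\Fil=\{\Fil^l\rho^{\mathrm{MFLC}}_\dR\circ R\Gamma\}_l$ and the isomorphism $\psi\circ R\Gamma$ of the statement. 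The relation $\rho_\Hig=F_k^*\circ\Gr_\Fil\rho_\dR$ of Theorem \ref{main theorem} then follows from $\rho^{\mathrm{MFLC}}_\Hig=F_k^*\circ\Gr_\Fil\rho^{\mathrm{MFLC}}_\dR$ together with Theorem \ref{spectral sequence}(ii).

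The one point that is not pure bookkeeping, and which I expect to be the only place requiring care, is checking that $R\Gamma(\tilde f)=f^*$ is a genuine morphism in $\mathrm{MFLC}(k)$: beyond preserving $W$ and $\Fil$, it must make the compatibility square \eqref{FL morphism} commute in $D^+F(\mathrm{Vec}_k)$. This is exactly where Lemma \ref{homotopy formula} enters, as it shows that the square relating $f^*$ to $\varphi_X,\varphi_Y$ (equivalently to $\Psi_{(\tilde X,\tilde D)},\Psi_{(\tilde Y,\tilde E)}$) commutes up to a filtered homotopy, so that after applying $R\Gamma$ the resulting square of $\psi$'s commutes on the nose in the filtered derived category. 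Granting this, the four data $(\rho^{\mathrm{MFLC}}_\dR\circ R\Gamma,\ \rho^{\mathrm{MFLC}}_\Hig\circ R\Gamma,\ \Fil,\ \psi\circ R\Gamma)$ assemble into the Fontaine-Laffaille quadruple over $\mathrm{Sch}^{\mathrm{sp}}_{W_2(k),\log}$, which is precisely the assertion of Theorem \ref{main theorem}. I anticipate no serious obstacle, since the substantive content has already been absorbed into Theorem \ref{spectral sequence} and Lemma \ref{homotopy formula}.
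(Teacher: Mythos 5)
Your proposal is correct and follows the same route the paper intends: the paper leaves this lemma as ``easily checked'' precisely because it is the definition-unwinding you carry out, with Lemma \ref{homotopy formula} supplying the commutativity of the square \eqref{FL morphism} (this is why the paper places the definition of $R\Gamma$ immediately after that lemma), and Lemma-Definition \ref{definition rho MFLC} together with Theorem \ref{spectral sequence} supplying the rest. You correctly isolate the only non-bookkeeping point — that $R\Gamma(\tilde f)=f^*$ is a genuine morphism in $\mathrm{MFLC}(k)$ — and resolve it exactly as the paper does.
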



\begin{thebibliography}{X-X00}

\bibitem{CEGT}
E. Cattani, F. El Zein, P. Griffiths, L. D. Tr\'{a}ng, Hodge theory, Mathematical Notes 49, Princeton University Press, Princeton, NJ, 2014.

\bibitem{D3}
Deligne, Th\'{e}orie de Hodge III, Inst. Hautes \'{E}tudes Sci. Publ. Math, No. 44 (1974), 5-77.

\bibitem{DI}
P. Deligne, L. Illusie, Rel\`{e}vements modulo $p^2$ et decomposition du complexe de de Rham, Invent. Math. 89 (1987), no. 2, 247-270.


\bibitem{EV}
H. Esnault, E. Viehweg, Lectures on vanishing theorems. DMV Seminar, 20. Birkh\"{a}user Verlag, Basel, 1992.

\bibitem{Fa}
G. Faltings, Crystalline cohomology and $p$-adic Galois-representations, Algebraic analysis, geometry, and number theory (Baltimore, MD, 1988), 25-80, Johns Hopkins Univ. Press,
Baltimore, MD, 1989.

\bibitem{FL}
J.-M. Fontaine, G. Laffaille, Construction de repr\'{e}sentation
$p$-adiques, Ann. Sci. Ec. Norm. Sup. (4) 15 (1982), no. 4, 547-608 (1983).

\bibitem{GH}
P. Griffiths, J. Harris, Principles of algebraic geometry, Pure and Applied Mathematics, Wiley-Interscience [John Wiley$\And$Sons], New York, 1978, xii+813 pp.

 \bibitem{Hartshorne}
R. Hartshorne, Algebraic geometry, Graduate Texts in Mathematics, No. 52, Springer-Verlag, New York-Heidelberg, 1997, xvi+496 pp.

\bibitem{IL02}
L. Illusie, Frobenius and Hodge degeneration, Introduction to Hodge theory,  SMF/AMS Texts and Monographs,  vol 8., 96-145, American Mathematical Society, Providence, RI; Soci\'et\'e Math\'ematique de France, Paris, 2002.

\bibitem{Ka}
M. Kashiwara, A study of variation of mixed Hodge structure, Publ. Res. Inst. Math. Sci. 22 (1986), no. 5,  991-1024.

\bibitem{LSZ}
G. Lan, M. Sheng, K. Zuo, Nonabelian Hodge theory in positive characteristic via exponential twisting, Math. Res. Lett. 22 (2015), no.3, 859-879.

\bibitem{LSZ1}
G. Lan, M. Sheng, K. Zuo, Semistable Higgs bundles,
periodic Higgs bundles and representations of algebraic fundamental
groups, J. Eur. Math. Soc. (JEMS), 21 (2019), no. 10, 3053-3112.

\bibitem{LSYZ}
G. Lan, M. Sheng, Y. Yang, K. Zuo, Uniformization of $p$-adic curves via Higgs-de Rham flows, J. Reine Angew. Math. 747 (2019), 63-108.
\bibitem{OV}
A. Ogus, V. Vologodsky, Nonabelian Hodge theory in characteristic $p$,
Publ. Math. Inst. Hautes \'{E}tudes Sci. No. 106 (2007), 1-138.

\bibitem{Schepler}
D. Schepler,  Logarithmic nonabelian Hodge theory in characteristic $p$. arxiv:0802.1977V1, 2008.

\bibitem{SZ}
M. Sheng, Z. Zhang, On the decomposition theorem for intersection de Rham complexes, arxiv:1904.06651, 2019.	


\end{thebibliography}
\end{document}